\newtheorem{thm}{Theorem}[section]
\newtheorem*{thm*}{Theorem}
\newtheorem{cor}[thm]{Corollary}
\newtheorem{lemma}[thm]{Lemma}
\newtheorem{defn}[thm]{Definition}
\theoremstyle{remark}
\theoremstyle{definition}
\newtheorem{rmk}[thm]{Remark}
\numberwithin{equation}{thm}
\def\beq{\begin{equation}}
\def\eeq{\end{equation}}
\def\ben{\begin{enumerate}}
\def\een{\end{enumerate}}
\DeclareSymbolFont{cyrletters}{OT2}{wncyr10}{m}{n}
\DeclareMathSymbol{\Dc}{\mathalpha}{cyrletters}{68}
\DeclareMathOperator*{\bigast}{\raisebox{-0.6ex}{\scalebox{2.5}{$\ast$}}}
\def\crash#1{}
\def\N{{\mathbb N}}
\def\P{{\mathbb P}}
\def\Q{{\mathbb Q}}
\def\R{{\mathbb R}}
\def\A{{\mathbb A}}
\def\l{\left}
\def\r{\right}
\def\[[{\l[\l[}
\def\]]{\r]\r]}
\def\cf{\emph{cf.}\;}
\def\ie{\emph{i.e.}\;}
\def\lc{\emph{loc.cit.}\;}
\def\cA{{\mathcal A}}
\def\cE{{\mathcal E}}
\def\cF{{\mathcal F}}
\def\cM{{\mathcal M}}
\def\cO{{\mathcal O}}
\def\cR{{\mathcal R}}
\def\cT{{\mathcal T}}
\def\cX{{\mathcal X}}
\def\cY{{\mathcal Y}}
\def\sC{{\mathscr C}}
\def\sH{{\mathscr H}}
\def\N{{\mathbb N}}
\def\0{{\mathbb O}}
\def\P{{\mathbb P}}
\def\Q{{\mathbb Q}}
\def\R{{\mathbb R}}
\def\A{{\mathbb A}}
\def\fX{{\mathfrak X}}
\def\fY{{\mathfrak Y}}
\def\fZ{{\mathfrak Z}}
\def\frs{{\mathfrak s}}
\def\fri{{\mathfrak i}}
\def\wtilde{\widetilde}
\def\what{\widehat}
\def\b{{\mathbbm{b}}}
\def\Spec{{\rm Spec\,}}
\def\sep{{\rm sep}}
\def\ins{{\rm ins}}
\def\ol{\overline}
\def\iso{\xrightarrow{\ \sim\ }}
\def\kc{{k^\circ}}
\def\kcc{{k^{\circ\circ}}}
\def\kt{\widetilde{k}}
\def\red{{\rm red}}
\def\vphi{\varphi}
\def\eps{\epsilon}
\def\ena{ (\cE,\nabla)}
\def\fna{(\cF,\nabla)}
\def\pf{{\mathbbm{p}}}
\begin{document}
\title[Uniformization of morphisms and $p$-dic differential equations]{Metric uniformization of morphisms of Berkovich curves via $p$-adic differential equations}
%\title[Radialization and controlling graphs]{Radial morphisms of $p$-adic analytic curves and $p$-adic differential equations \\
%or \V{Metric uniformization of morphisms of Berkovich curves via $p$-adic differential equations}}
\author{Francesco Baldassarri and Velibor Bojkovi\'c}

%\keywords{}

\begin{abstract}
We consider a finite \'etale morphism $f:Y \to X$ of quasi-smooth Berkovich curves over a complete nonarchimedean  non-trivially valued field $k$, assumed algebraically closed  and of characteristic 0, and a skeleton $\Gamma_f=(\Gamma_Y,\Gamma_X)$
 of the morphism  $f$.    We prove that $\Gamma_f$ radializes $f$ if and only if $\Gamma_X$  controls   the pushforward of the constant $p$-adic differential equation $f_\ast(\cO_Y,d_Y)$. 

Furthermore, when $f$ is a finite \'etale morphism of open unit discs, we prove that $f$ is radial if and only if the number of preimages of a point $x\in X$, counted without multiplicity, only  depends on the radius of the point $x$. 
\end{abstract}
\maketitle
\tableofcontents

\section{Introduction}
One of the most important results concerning the structure of smooth projective $k$-algebraic curves, where $k$ is a complete, nonarchimedean, and nontrivially valued algebraically closed field of characteristic 0, is the semistable reduction theorem~: such curves admit a semistable model. In Berkovich approach to nonarchimedean geometry, this theorem has many avatars, and extends to a more general class of curves, namely to quasi-smooth $k$-analytic curves (close analogs of  classical Riemann surfaces in complex analytic geometry) via the notion of triangulation or, alternatively, of skeleton. Namely, if $X$ is a quasi-smooth $k$-analytic curve then, then it admits a skeleton \cite[Chapter 5]{ducros}. Here, a skeleton of a curve is a locally finite ``graph'' $\Gamma$ in $X$ such that $X\setminus\Gamma$ is a disjoint union of open (unit) discs (this pretty much resembles the classical situation where if $\cX$ is a Riemann surface and $\cT$ its triangulation, then $\cX\setminus\cT$ is a disjoint union of unit  open $k$-discs).

If we consider a finite morphism $f:\cY\to \cX$ of smooth projective $k$-algebraic curves, then a result of Coleman \cite{Col03} says that there exist semistable models of $Y$ and $X$, respectively, to which $f$ extends as a finite morphism. With no surprise, this result extends to finite morphisms $f:Y\to X$ of quasi-smooth $k$-analytic curves where it can be stated as follows~: there exists a skeleton of the morphism $\Gamma_f=(\Gamma_Y,\Gamma_X)$ where $\Gamma_Y$ and $\Gamma_X$ are skeleta of $Y$ and $X$, respectively, such that $\Gamma_Y=f^{-1}(\Gamma_X)$ (see Section \ref{sec: radialization}). Among the many consequences of this result, one in particular simplifies the study of the morphism $f$. Namely, for any open disc $D$ in $Y$ which is attached to $\Gamma_Y$ (meaning that the closure of $D$ in $Y$ intersects $\Gamma_Y$ in only one point), the restriction $f_{|D}$ is a finite morphism of open discs, and the image $f(D)$ is an open disc attached to $\Gamma_X$. Furthermore, for every open disc $D$ attached to $\Gamma_X$, $f^{-1}(D)$ is a finite disjoint union of open discs attached to $\Gamma_Y$. 

One may ask  to what extent does the skeleton $\Gamma_f$ of a morphism $f$ capture its  properties. Conversely, can one find a skeleton $\Gamma_f$ which ``controls'' the behavior  of $f$ on discs attached to $\Gamma_Y$, in such a way that  at least some   properties of $f$ over such a disc $D$ only depend on its boundary point of $D$ on $\Gamma_Y$~? We will show that this is the case for  all metric properties of the morphism $f$.
%
%For example, if we consider a (type 2) point $\eta$ on $\Gamma_Y$ and any two discs $D_1$ and $D_2$ attached to $\eta$, can we find $\Gamma_f$ so that the two morphisms $f_{|D_1}$ and $f_{|D_2}$ are "similar"? The answer is yes, if "similar" concerns the metric properties of the morphism $f$. 

To be more precise, we introduce some terminology. Given a finite morphism $f:D_1\to D_2$ of open unit discs, we say that $f$ is \emph{radial} if for \emph{any} pair of \emph{compatible} coordinates $T$ on $D_1$ and $S$  on $D_2$, (\ie such that $f$ sends $T=0$ to $S=0$), the valuation polygon  of the expansion $S(T)$ of $f$ stays the same (Definition \ref{defn: radial discs}). In other words, for any $x \in D_1$, the radius of the point $f(x)$ only depends upon the radius of $x$. 
%the radius of the $f$-image of a point $x$ only depends on the radius of the point. 
We call such a valuation polygon (or rather its multiplicative version) the {\em profile of $f$}. One of the main results of \cite{TeMU} is the existence of the {\em radializing skeleta}, \ie for a finite morphism $f:Y\to X$ of quasi-smooth $k$-analytic curves, there exists a skeleton $\Gamma_f=(\Gamma_Y,\Gamma_X)$ such that for any two discs $D_1$ and $D_2$ in $Y$ attached to the same point on $\Gamma_Y$, the restrictions $f_{|D_1}$ and $f_{|D_2}$ are radial morphisms and their profiles coincide.  \par
\smallskip 
The other half of our story concerns $p$-adic differential equations on quasi-smooth Berkovich $k$-analytic curves. The theory flourished in the past decade or so, in an effort of globalizing over  a  curve  convergence properties of solutions and index theorems of the operators, discovered by Dwork and Robba for equations on standard  affinoid and dagger affinoid domains in the projective line. The global approach to index theorems was then developed by Christol and Mebkhout in a series of important papers.  We refer to \cite{Ke2} for a systematic exposition, and a deep refinement, of the results known until the year 2010, or so.
%of local solutions of such differential equations. The basic local theory is almost entirely due to Dwork and Robba, and 
%is organized systematically in \cite{Ke2}, while the general global theory was 
A new interpretation of the Dwork-Robba radius of convergence and a related conjecture, due to  the senior author \cite{Cont},  then opened the way to a clean global understanding  of   convergence properties  of local solutions of differential equations 
 on a Berkovich curve \cite{Pu1, PoPu}. 
\par \smallskip
%with many results relying on the rich structure of Berkovich curves. We cite \cite{Ke2} for a systematic approach to the theory and recollection of the results. 
Let us shortly recall the results which are most important for the present  paper.  Let $X$ be a quasi-smooth Berkovich $k$-analytic curve, $(\cE,\nabla)$ be a coherent $\cO_X$-module of rank $r$ equipped with a connection $\nabla:\cE\to\cE\otimes \Omega^1_X$ 
%satisfying the Leibniz rule 
(simply called a \emph{$p$-adic differential equation} from now on), and let $\Gamma=\Gamma_X$ be a skeleton of $X$. Then, to every $k$-rational (\ie of type 1) point $x\in X(k)$ one may associate an $r$-tuple $\cM\cR_{\Gamma}(x,\ena)=(\cR_1,\dots,\cR_r)$ of numbers in $(0,1]$, called the \emph{multiradius of convergence} of solutions of $\ena$ at $x$, in the following way. We pick the unique open disc $D =: D_\Gamma(x,1^-)$ which contains $x$ and  is attached to $\Gamma$~: we call  $D$   \emph{the open $\Gamma$-unit disc centered at $x$}, so that the graph $\Gamma$ plays the role of a global unit of measurement. Then, the number $\cR_i$, for $i=1,\dots,r$ is the supremum of numbers $s \in (0,1)$ such that there are at least $r-i+1$ solutions of $\ena$ on the open subdisc $D_\Gamma(x,s^-)$ of $D$ centered at $x$ and of relative radius $s$ (see Section \ref{sec: push} for more details). The definition of multiradius extends to all   points of the curve $X$, with the method of  \cite[\S 0.1]{Cont}. The fundamental result is that the multiradius is a continuous function on $X$~: this was proven in \cite{Cont} for the the component $\cR_1$ and in \cite{Pu1, PoPu} in general. Furthermore, it is proved in \cite{PoPu} that there exists a skeleton $\Gamma'$ containing $\Gamma$ such that 
for any open disc $D$ in $X\setminus \Gamma'$, attached to the point $\xi \in \Gamma'$, $\cM\cR_{\Gamma}(x,\ena)=\cM\cR_{\Gamma}(\xi,\ena)$, for any $x \in D$. 
%for any point $x\in X\setminus\Gamma'$ that belongs to an open disc $D$ which is attached to a point $\xi \in \Gamma'$ we have $\cM\cR_{\Gamma}(x,\ena)=\cM\cR_{\Gamma}(\xi,\ena)$. 
We say that such a $\Gamma'$ is \emph{a controlling skeleton for $\ena$} with respect to $\Gamma$. 
Notice that if  $\Gamma'$ is  a controlling skeleton for $\ena$  with respect to $\Gamma$, it is so with respect to $\Gamma'$, as well \cite[\S 3.2]{Cont}. A particular case is when $X$ is an open unit disc, so one can take $\Gamma = \emptyset$ as a skeleton of $X$. Then  $\emptyset$ is controlling for $\ena$ with respect to $\emptyset$ precisely when  the multiradius function is constant all over $X$.\\
\par \smallskip
The aim of the present article is to study the relation between radializing skeleta of a finite \'etale morphism  $f:Y\to X$ of quasi-smooth $k$-analytic curves and  controlling graphs of the $p$-adic differential equation $f_\ast(\cO_Y,d_Y)$ on $X$. Our main result is the following (Theorem \ref{thm: main thm}).
\begin{thm*}
 Let  $f:Y\to X$ be a finite  \'etale morphism of quasi-smooth $k$-analytic curves and let $\Gamma_f=(\Gamma_Y,\Gamma_X)$ be a skeleton for $f$. Then $\Gamma_f$ is radializing for $f$ if and only if $\Gamma_X$ is controlling for $ f_\ast(\cO_Y,d_Y)$ with respect to $\Gamma_X$, where $(\cO_Y,d_Y)$ is the constant $p$-adic differential equation on $Y$.
\end{thm*}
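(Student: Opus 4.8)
The proof should reduce the global statement to a purely local computation on each open $\Gamma$-unit disc of $X$, and then identify the two local invariants — the profile of $f$ over the disc and the multiradius of convergence of $f_\ast(\cO_Y,d_Y)$ over the disc. So the first step is the standard reduction: both the radializing property of $\Gamma_f$ and the controlling property of $\Gamma_X$ are conditions that are checked disc by disc. Given a point $\xi\in\Gamma_X$ and the open $\Gamma_X$-unit disc $D=D_{\Gamma_X}(\xi,1^-)$, the preimage $f^{-1}(D)$ is a finite disjoint union of open discs $D_1,\dots,D_m$ attached to $\Gamma_Y$ (this uses the properties of skeleta of morphisms recalled in the introduction), and the restriction $f_{|D_j}:D_j\to D$ is a finite étale morphism of open unit discs of some degree $d_j$, with $\sum_j d_j = \deg f$. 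Since $f$ is finite étale, $f_\ast\cO_Y$ restricted to $D$ is a free $\cO_D$-module of rank $\deg f$, and as a differential module it decomposes as the direct sum over $j$ of $(f_{|D_j})_\ast(\cO_{D_j},d_{D_j})$. Hence it suffices to prove the equivalence for a single finite étale morphism $g:D'\to D$ of open unit discs: $g$ is radial $\iff$ $\emptyset$ is controlling for $g_\ast(\cO_{D'},d_{D'})$ with respect to $\emptyset$, i.e. the multiradius of $g_\ast(\cO_{D'},d_{D'})$ is constant on $D$.

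The heart of the matter is then the local identification. The key point is a precise relationship between the \emph{profile} of $g$ (the valuation polygon of the coordinate expansion $S(T)$) and the \emph{radii of convergence} of the pushforward differential module $g_\ast(\cO_{D'},d_{D'})$. Over a point $x\in D$ with $g^{-1}(x)=\{y_1,\dots,y_\ell\}$ (points, without multiplicity), the local solutions of $g_\ast(\cO_{D'},d_{D'})$ at $x$ are governed by how far one can extend, as analytic functions on a subdisc $D(x,s^-)$, the branches of $g^{-1}$; concretely, a solution on $D(x,s^-)$ corresponds to a connected component of $g^{-1}(D(x,s^-))$ that maps isomorphically onto $D(x,s^-)$, and the thresholds where the number of such components jumps are exactly the radii at which the geometry of $g$ over the fiber changes — which is precisely what the profile of $g$ records. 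I would make this rigorous by writing $g$ in compatible coordinates, using that $g_\ast(\cO_{D'},d_{D'})$ is (the Weil restriction of) the trivial connection, so its horizontal sections over $D(x,s^-)$ are spanned by the functions $1, S, S^2, \dots$ pulled back along the various local sections of $g$; the radius $\cR_i(x)$ is then read off from the Newton/valuation polygon of $S(T)-S(x)$ centered at the preimages of $x$. This gives an explicit formula expressing the multiradius $\cM\cR_{\emptyset}(x,g_\ast(\cO_{D'},d_{D'}))$ as a function of the profile of $g$ evaluated at the radius of $x$ — so if $g$ is radial, the profile, hence the multiradius, depends only on $|T(x)|$, giving constancy along the skeleton; conversely, constancy of the multiradius forces the profile to be independent of the choice of $x$ in each radius-sphere, which is the definition of radiality.

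The step I expect to be the main obstacle is the \emph{converse} direction of the local statement — deducing radiality of $g$ from constancy of the multiradius of $g_\ast(\cO_{D'},d_{D'})$. The forward direction is essentially a computation once the dictionary is set up, but going backwards one must argue that the full valuation polygon of $S(T)$, and not merely some coarse numerical shadow of it, is recovered from the multiradius function; in principle the multiradius only sees the \emph{largest} subdisc on which a given number of branches of $g^{-1}$ exists, and one must check that, as $x$ ranges over $D$ and $i$ over $1,\dots,\deg g$, these data pin down the profile completely. Here I expect to invoke, or re-prove in the local setting, the structure theory of finite morphisms of discs from \cite{TeMU} (profiles, piecewise-monomial behaviour of $|S(T)|$) together with the continuity and piecewise-log-linearity of the multiradius from \cite{Cont, Pu1, PoPu}, matching slopes of the two piecewise-linear objects at each breakpoint. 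A secondary technical point, worth isolating as a lemma, is the compatibility of the whole picture with the base graph $\Gamma_X$ when $X$ is not a disc — i.e. that "controlling with respect to $\Gamma_X$" genuinely localizes to the "$\emptyset$-controlling on a disc" statement used above — which follows from the normalization property of controlling skeleta recalled in the introduction (\cite[\S 3.2]{Cont}).
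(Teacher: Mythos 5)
Your plan has the right overall shape (localize to discs attached to the skeleton, translate the multiradius of $f_\ast(\cO_Y,d_Y)$ into fiber-counting data, and isolate the converse as the hard step), but there are two genuine gaps. The first is the reduction ``it suffices to prove the equivalence for a single finite \'etale morphism $g:D'\to D$ of open unit discs.'' This does not follow. The multiradius of $f_\ast\cO_Y$ at $x\in D$ is the unordered concatenation $\bigast_j\cM\cR\bigl(x,(f_{|D_j})_\ast(\cO_{D_j},d_{D_j})\bigr)$ over the several components $D_j$ of $f^{-1}(D)$, and constancy of such a concatenation does not imply constancy of each factor; moreover radiality with respect to $\Gamma_f$ couples distinct discs (it demands equal profiles for different discs attached to the same point of $\Gamma_Y$), which a single-disc statement cannot see. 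The paper resolves exactly this point by first proving that the controlling hypothesis forces $f$ to be residually uniformly ramified (Lemma~\ref{lem: push res ram}) and then lifting the radicial--separable factorization of the reduced morphism to a factorization $f=f_s\circ f_i$ (Theorem~\ref{thm: factorization}, which uses Berkovich's lifting of \'etale maps of formal schemes and Coleman's lifting theorem); the residually separable part is automatically radial, and for the residually radicial part the preimage of a disc is a \emph{single} disc, so the direct sum collapses and one may legitimately argue one disc at a time. Without some substitute for this factorization your converse direction does not go through.

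The second gap is the local converse itself. Your dictionary is also slightly off: horizontal sections of $g_\ast(\cO_{D'},d_{D'})$ on $D(x,s^-)$ are the locally constant functions on $g^{-1}(D(x,s^-))$, so $\dim_k H^0$ equals the number of connected components of the preimage (equivalently $\#g^{-1}(\zeta_{x,s})$), not the number of components mapping isomorphically onto $D(x,s^-)$; this is the content of Lemma~\ref{lem: push constant}. Granting the corrected dictionary, what remains is precisely: if $\#g^{-1}(y_\rho)$ depends only on $\rho$ then $g$ is radial. This is Theorem~\ref{thm: criterion target}, which the paper stresses is strictly harder than the source-side criterion of \cite{TeMU}; its proof is a nontrivial induction on weak $n$-radiality using the exact boundary functions $\theta_n$ and a comparison of fiber cardinalities at two points of the same radius to reach a contradiction. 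Your proposal acknowledges the difficulty but offers only ``matching slopes of the two piecewise-linear objects,'' which begs the question, since the multiradius a priori records only the jump loci of the fiber-count and one must rule out the possibility that two non-radial branches conspire to produce the same jump data; the weakly-$n$-radial bookkeeping is exactly what excludes this. As written, the proposal does not contain a proof of either of these two key steps.
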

The close relation between radial morphisms and pushforwards of the constant connection has already been studied in \cite{BoPoPush}. There, the multiradius $\cM\cR_\Gamma(x,f_\ast(\cO_Y,d_Y))$ at a rational point $x\in X(k)$ has been described in terms of the profile of the restriction of $f$ on the connected components of $f^{-1}(D_{\Gamma_X}(x,1^-))$
(all of them  
open $\Gamma_Y$-unit discs).
Our result above further clarifies this relation.
\par \smallskip
Our first main ingredient is  Lemma~\ref{lem: push constant} 
below which indicates how the multiradius of convergence of solutions of $f_\ast(\cO_Y,d_Y)$ at $x \in D_2(k)$ is related to the jumps of the function ``cardinality of the fiber $f^{-1}(x_\rho)$''.
Secondly, we need a criterion of radiality for $f$ expressed in terms of a function on the \emph{target disc}.   
We end up with the following simple characterization  of radial morphisms of open unit discs 
(\cf Theorem~\ref{thm: criterion target} below)
\begin{thm*}
 A finite \'etale morphism of open unit discs $f:D_1\to D_2$ is radial if and only if, for any point $x \in D_2$, the 
 cardinality of the fiber $f^{-1}(x)$ only depends on the radius of $x$. 
 \end{thm*}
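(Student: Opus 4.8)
The plan is to prove the equivalence by relating the cardinality function $x \mapsto \#f^{-1}(x)$ on the target disc $D_2$ to the profile of $f$, using the behavior of $f$ along the ``skeleton rays'' emanating from points. First I would recall that for a finite étale map of open unit discs, fixing compatible coordinates $T$ on $D_1$ and $S$ on $D_2$, the profile of $f$ is encoded by the Newton/valuation polygon of $S(T)$, equivalently by the map $\rho \mapsto$ (radius of $f(x)$ for $x$ of radius $\rho$) — when this is well defined, which is exactly radiality. For a point $x_\rho \in D_2$ of radius $\rho$, its preimage $f^{-1}(x_\rho)$ is a finite set of points; since $f$ is étale the total multiplicity is the constant degree $n = \deg f$, so the cardinality $\#f^{-1}(x_\rho)$ is $n$ minus the defect coming from points of the fiber at which the local degree of $f$ exceeds $1$. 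The key observation is that, by the theory of morphisms of discs (and the structure of skeleta $\Gamma_f$ developed earlier), as $\rho$ varies the points of $D_2$ of radius $\rho$ that are ``branch-type'' in this sense — i.e. points $\eta$ such that $f^{-1}(\eta)$ contains a point of local degree $>1$ — are governed precisely by the breaks of the profiles of the components of $f^{-1}$ over that radius.

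Concretely, I would argue as follows. For the direction ``$f$ radial $\Rightarrow$ $\#f^{-1}(x)$ depends only on radius'': if $f$ is radial, then for every $\Gamma_Y$-disc component $C$ of $f^{-1}(D_2)$ the restriction $f|_C$ has a fixed profile $\psi_C$, independent of the choice of compatible coordinates; the profile determines, for each radius $\rho$, the number of preimages inside $C$ of a point of radius $\rho$ (this is a standard fact: for a finite map of discs $g$, the number of preimages of a point of radius $\rho$ equals the degree of $g$ minus the sum of the slopes' contributions at radii $> \rho$, equivalently it is read off from the polygon of $g$). Summing over the finitely many components $C$ gives a function of $\rho$ alone, hence $\#f^{-1}(x)$ depends only on the radius of $x$. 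For the converse, I would argue contrapositively: suppose $f$ is not radial. Then there exist two points $x, x'$ of the same radius $\rho$ with $f(x), f(x')$ of different radii — equivalently, moving along the two coordinate rays from the Gauss point down to radius $\rho$, the images trace paths whose radii diverge. I would then show this forces a discrepancy in $\#f^{-1}$: pick a radius $\rho'$ and a point $y$ of radius $\rho'$ lying under $x$ in such a way that the number of components of $f^{-1}(D_2(y,\rho'^-))$ containing a point over $y$ with a break just below $\rho'$ is not symmetric — i.e. reconstruct the fiber cardinality from the (now coordinate-dependent, hence non-unique) profiles and exhibit two points of equal radius with different fiber cardinality. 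The cleanest implementation is to use Lemma~\ref{lem: push constant} in reverse: non-radiality means the multiradius $\cM\cR_\emptyset(\cdot, f_\ast(\cO_{D_1},d_{D_1}))$ is non-constant, and since that lemma ties the multiradius to the jumps of $\rho \mapsto \#f^{-1}(x_\rho)$, a non-constant multiradius is incompatible with $\#f^{-1}(x)$ being a function of radius alone.

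The hard part will be making the converse direction genuinely local and coordinate-free. The subtlety is that the cardinality $\#f^{-1}(x)$ being a function of the radius of $x$ is a statement about \emph{individual} points of a given radius, whereas radiality is a statement about the whole profile; one must rule out ``accidental cancellations'' where two different non-radial configurations happen to give the same fiber cardinality at every radius. I expect to handle this by a connectedness/monodromy argument along the tree $D_2$: the function $\rho \mapsto \#f^{-1}(x_\rho)$ is lower semicontinuous and its jumps occur exactly at the ``critical radii'' (radii of images of ramification-type points); if $f$ were non-radial there would be, for some radius, two points of that radius with different critical behavior, and tracking the local degree of $f$ at the points of the fiber — which is locally constant on the complement of a finite graph by étaleness and the structure theorem — yields two points of equal radius with distinct fiber cardinalities, a contradiction. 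I would also need the elementary fact that for a finite map of open discs, radiality is equivalent to the valuation polygon of $S(T)$ being independent of the compatible coordinates, which reduces, via an affine change of coordinate $T \mapsto aT$, $S \mapsto bS$, to invariance of the polygon under the induced transformations — a routine check I would relegate to the preliminaries.
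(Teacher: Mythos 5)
Your ``only if'' direction is essentially the paper's: for radial $f$ the preimages of $y_\rho$ are the points $\zeta_{x_i,\pf_f^{-1}(\rho)}$ over the $d$ preimages $x_i$ of $y$, all with the same geometric ramification index $\nu=\partial^-v(f,-\log\pf_f^{-1}(\rho))$, and the degree formula (Lemma~\ref{lem: mult relation}, Corollary~\ref{cor: num preimages}) gives $\#f^{-1}(y_\rho)=d/\nu$, a function of $\rho$ alone. That half is fine.

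The converse is where all the content lies, and both of your proposed routes have a genuine gap. The ``cleanest implementation'' via Lemma~\ref{lem: push constant} is circular: that lemma shows that $\cM\cR(x,f_*(\cO_{D_1},d_{D_1}))$ encodes exactly the jumps and values of $N_x$, so ``$N_x$ independent of $x$'' and ``constant multiradius'' are reformulations of one another, and passing from either to radiality invokes precisely the equivalence being proved --- in the paper the disc case of Theorem~\ref{thm: main thm} is \emph{deduced from} Theorem~\ref{thm: criterion target} via Corollary~\ref{cor: N properties}, not the other way around. Your direct contrapositive sketch correctly names the danger (``accidental cancellations'' where a non-radial $f$ still has fiber cardinality depending only on the radius) but does not rule it out: ``lower semicontinuity plus a monodromy argument along the tree'' produces no specific pair of equal-radius points with provably different fiber counts, and non-radiality can occur at arbitrarily small scales deep inside $D_1$. (Your starting point, that non-radiality yields $x,x'$ of equal radius with $f(x),f(x')$ of different radii, is itself not immediate: the intrinsic radius of $f(\zeta_{a,\rho})$ is $\max(|f(a)|,\pf_{a,f}(\rho))$, and the maxima can agree even when the profiles differ.)

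The paper closes exactly this gap with the weakly $n$-radial machinery, which is absent from your proposal. Taking $n$ maximal such that $f$ is weakly $n$-radial but not weakly $(n+1)$-radial, the first $n$ slopes of every $v_a(f,\cdot)$ agree and the exact boundary function $\theta_n$ is non-constant and monotone in $|a|$ on a thin annulus near the boundary of $D_1$ (Lemmas~\ref{lem: special intervals} and~\ref{lem: theta constant}, Corollary~\ref{cor: theta const}). One then chooses $a,b$ in that annulus with $|a|<|b|$ and a radius $r$ strictly between $\pf_{a,f}(\theta_n(a))$ and $\pf_{b,f}(\theta_n(b))$, so that every preimage of $\zeta_{f(a),r}$ has ramification index exactly $i_n$ while every preimage of $\zeta_{f(b),r}$ has index $<i_n$; Corollary~\ref{cor: num preimages} and Lemma~\ref{lem: mult relation} then force $\#f^{-1}(\zeta_{f(a),r})=d/i_n<\#f^{-1}(\zeta_{f(b),r})$, the desired contradiction. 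Without some substitute for this localization-near-the-boundary argument, your converse remains unproven.
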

Notice that our statement is harder to prove than Lemma~2.3.6 of \cite{TeMU} which relates instead 
   radiality of  $f$ to radiality of the function ``multiplicity of $f$'' on the \emph{source disc}.     
\par
\medskip
We now describe the contents of the paper. In section 2 we recall some properties of finite morphisms of open discs. In particular, we introduce the notion of \emph{(weakly) $n$-radial} morphism which generalizes the one of radial morphism. From a careful study of those,  we  obtain the criterion of radiality for morphisms of open  discs  presented in Section 2.3. In order to prove our main result we further need to simplify the situation at a point  $\eta \in Y$ of type 2, and to reduce  to the case when $f$  is a   morphism  of affinoid curves with good reduction and maximal points $\eta$ and $f(\eta)$, respectively, which is residually purely inseparable at $\eta$, as this is the case when our criterion for radiality applies. So, we discuss the problem of when a   finite morphism  factors into 
a product of a residually purely inseparable morphism followed by a residually separable one  (see Section 2.5 for the result and definitions involved). In Section 3 we recall the general properties of $p$-adic differential equations and in Section 4 we prove our main result.

\section{Some properties of morphisms of open discs}

\subsection{Morphisms of open discs}
\subsubsection{}  
Throughout the paper $(k,|\cdot |)$ will be an algebraically closed complete valued field extension of $(\Q_p,|\cdot |_p)$. 

By an open (resp. closed) disc (or $k$-disc for precision) of radius $r \in \R_{> 0}$ we mean a $k$-analytic curve (in the sense of Berkovich geometry) $D$ isomorphic to a standard open (resp. closed) disc centered at 0 and of some radius $r>0$ in the affine $T$-line $\A^1_k =\cM(k[T])$. For any $k$-analytic domain $D' \subset D$ which is also a disc, the \emph{relative radius} of $D'$ in $D$ is well-defined. Similarly, to any point $x \in D$ we intrinsically associate the relative radius $r(\xi) = r_D(\xi)$ of $\xi$. Then, for any $k$-rational point  $a \in D(k)$ and $s \in (0,1)$, $D(a,s^-)$ (resp. $D(a,s)$)
will denote a disc  of relative radius $s$ in $D$ containing $a$ and   $\zeta_{a,s} \in D$, or simply $a_s$, will indicate the maximal point of $D(a,s)$. Similarly, for $0<r_1\leq r_2 \leq r$, we will denote by $A(a;r_1,r_2)$ (resp.  $A[a;r_1,r_2]$ if $r_2 <r$) an open (resp. closed) annulus centered at $a$ and with inner radius $r_1$ and outer radius $r_2$. That is $A(a;r_1,r_2)=D(a,r_2^-)\setminus D(a,r_1)$. 
Most often we will deal with a \emph{unit} disc, namely a disc $D$ 
equipped with a fixed  isomorphism $T:D\iso D(0,1^-)$ (resp. $D(0,1)$) in which case the previous notions coincide with 
the ones defined in terms of the coordinate $T$. 
In an open unit disc $D$, for any $a \in D(k)$, there is a unique \emph{path from $a$ to the exit}, namely  $l_a:=\{a_r \mid r \in [0,1)\}$ which we equip  in a natural way with the topology of a real segment. 
\subsubsection{} Let $D$ be an open (resp. closed) unit disc with  coordinate $T$  and let $f(T)=\sum_{i\geq 0}a_i\, T^i$ be an analytic function on $D$. We recall that the function $v(f,\cdot):(0,\infty)\to \R$, ( resp. $[0,\infty)\to\ \R$) defined by 
$$
\lambda \mapsto \inf_{i\geq 0}\{v(a_i)+i\cdot \lambda\}=-\log\big(\sup_{\substack{a\in k\\ |a| \leq e^{-\lambda}}}\{|f(a)|\}\big) = -\log (r(f(\zeta_{0,e^{-\lambda}})) )
$$
 is called the \emph{valuation polygon} of the function $f$. We recall some of the basic properties of the valuation polygon functions that will be used throughout this paper, while for a more detailed study we refer the reader to \cite{lazard}. 
 \begin{enumerate}
  \item $v(f,\cdot)$ is a continuous, piecewise affine and concave function. Its slopes are integral and in fact nonnegative. For $\lambda \in \R_{\geq 0}$ we denote by $\partial^+v(f,\lambda)$ (resp. $\partial^-v(f,\lambda)$) the right (resp. the left) slope of $v(f,\cdot)$ at $\lambda$.
  \item The values $\lambda \in \R_{\geq 0}$ such that $\partial^+v(f,\lambda)\neq \partial^-v(f,\lambda)$ are necessarily elements of $v(k^\times)$, called the \emph{break values} of the valuation polygon of $f$. The number $\partial^-v(f,\lambda)-\partial^+v(f,\lambda)$ is the number of zeroes of $f(T)$, counted with multiplicities, of valuation $\lambda$ (\ie of  absolute value $e^{-\lambda}$). 
  \item The valuation polygon is invariant under automorphisms of the disc $D$,  $T\to h(T)$, which preserve the origin (\ie such that $h(0)=0$).
 \end{enumerate}

\subsubsection{} Let $f:D_1\to D_2$ be a quasi-finite morphism of open unit discs and let $T$ and $S$ be coordinates on $D_1$ and $D_2$, respectively. Then $f$ can be expressed in the form 
\begin{equation}\label{eq: TS expression}
 S= S(T) = \sum_{i\geq 0}a_i\, T^i,\quad a_i\in k,
\end{equation}
where the coefficients $a_i$ satisfy the usual convergence property
$$
\lim\limits_{i\to \infty}|a_i|r^i=0 \;\;,\;\; \forall r\in (0,1) \;.
$$

We call \eqref{eq: TS expression} the $(T,S)$ expansion of $f$ and denote the power series on the right-hand side of it by $f_{(T,S)}(T)$. If $a_0=0$, we will say that $(T,S)$ is a compatible pair of coordinates for $f$. We denote the valuation polygon of the right-hand side of \eqref{eq: TS expression} by $v_{(T,S)}(f,\cdot)$ and call it the $(T,S)$-valuation polygon of $f$. 

Let $a \in D_1(k)$ be a $k$-rational point of $D_1$, let $b = f(a) \in D_2(k)$, and  $T$ be a coordinate on $D_1$ such that $T(a)=0$. We will say that $T$ is centered at $a$. Let $S$ be a coordinate on $D_2$ centered at $f(a)$ (note that $T$ and $S$ are then compatible). We note that the $(T,S)$-valuation polygon of $f$ then only depends on the point $a$ and not on the coordinates $T$ and $S$ (the property (3) above). We call it the \emph{valuation polygon of $f$ at $a \in D_1(k)$}, and denote it by $\lambda \mapsto v_a(f, \lambda)$, $\forall \, \lambda \in \R_{\geq 0}$.

\subsubsection{}  
We next explain the geometric meaning of the terms $\partial^+v(f,\lambda)$ and $\partial^-v(f,\lambda)$ introduced above. Let $f:D_1\to D_2$ be a quasifinite morphism of open unit discs, let $a\in D_1(k)$, $\lambda\in (0,\infty)$ so that $r:=e^{-\lambda} \in (0,1) \cap |k|$, $y = \zeta_{a,r}$, and $x = f(y) = \zeta_{f(a),r'}$.  The finite extension $\wtilde{\sH(y)}/\wtilde{\sH(x)}$ of function fields over $\kt$ determines a finite morphism $\wtilde{f}$ of smooth projective $\kt$-curves $\wtilde{f} : \sC_y \to \sC_x$.  Closed points of  $\sC_y$ (resp. $\sC_x$) correspond to open discs in $D_1$ (resp. $D_2$), attached to $y$ (resp. $x$). Following a suggestive picture, we regard such points or discs as ``tangent vectors''   on $Y$ at $y$ (resp. on $X$ at $x$); in particular, we denote by $\vec t_{y,a} \in \sC_y (\kt)$ the point corresponding to 
the open disc $D(a,r^-)$. 
By the theory of Newton polygons, the number $\partial^+ v_a(f,\lambda)$  is the degree of the finite morphism 
$D(a,r^-) \to D(f(a),{r'}^-)$ induced by $f$, \ie   the number of zeros counting multiplicities of $f$ in $D(a,r^-)$. On the other hand (\cf \cite[Th\'eor\`eme 4.3.13]{ducros}) the latter number  coincides  with 
%the multiplicity 
%$\mu_{y,a} = \mu_{y,a}(f)$ 
%of $f$ at the point $v = \vec t_{y,a}$ of type 5 which specializes $y$ and corresponds to the open disc $D(a,r^-)$ attached to $y$ (\cf \cite[3.4.4]{CTT}). The number $\mu_{y,a}(f)$ indicates Then $\mu_{y,a}(f)$ also coincides with 
the algebraic multiplicity of   $\wtilde{f}$ at   $\vec t_{y,a}$. 
In our case, both the curves $\sC_y$  and $\sC_x$ are projective lines over $\kt$, equipped with the affine coordinate  
$\wtilde{T}_a := \frac{T-a}{\pi} \mod \kcc$, $\wtilde{S}_{f(a)} := \frac{S-f(a)}{\pi'} \mod \kcc$.  Then $\wtilde{f}$ is represented in the coordinates  $\wtilde{T}_a$ and $\wtilde{S}_{f(a)}$ as a polynomial of degree 
$\partial^- v_a(f,\lambda)$. In particular, 
$$   [\wtilde{\sH(y)}:\wtilde{\sH(x)}] = [\sH(y):\sH(x)] 
$$ 
(the equality following from the fact that the valued field $\sH(x)$ is stable)  is the sum of the multiplicities at all the tangent points on $Y$ at $y$ and
coincides with  the geometric ramification index $\nu_f(y)$ of the point $y$ in the sense of \cite[\S 6.3.]{BerkovichEtale}. 
The same argument used at $\vec t_{y,a}$, \ie at $\wtilde{T}_a = 0$, shows that the
multiplicity of $\wtilde{f}$ at $\wtilde{T}_a = \infty$, \ie at the tangent vector $\vec t_{y,\infty}$  represented by the annulus $D_1 - D(a,r)$,  is the negative of $\partial^- v_a(f,\lambda)$.
Summing up the multiplicities of the zeros of $\wtilde{T}_a$ on $\sC_y =\P^1_{\kt}$ we obtain the classical proof of harmonicity of
the function $x \mapsto -\log |f(x)|$  on $D_1$. 
 \par
%  \begin{defn}
%   In the situation of the previous lemma, we define the multiplicity $\mu_f(\zeta_{a,r})$ of the point $\zeta_{a,r}\in D_1$ by 
%   $$
%   \mu_f(\zeta_{a,r}):=\partial^-v_a(f,r).
%   $$
%  \end{defn}
The previous discussion   proves   assertion {\em (1)} in the following lemma, while {\em (2)} and {\em (3)} are   not difficult to prove using the properties 1) and 2) of valuation polygons.

\begin{lemma}\label{lem: basic discs}\hfill  \ben
\item Let $f:D_1\to D_2$ be a quasi-finite morphism of open unit discs, let $a\in D_1$ and $r\in (0,1)$. Then $f_{|D(a,r^-)}:D(a,r^-)\to f(D(a,r^-))$ (resp. $f_{|D(a,r)}:D(a,r)\to f(D(a,r))$) is a finite morphism of open (resp. closed) discs of degree $\partial^+v_a(f,r)$ (resp. $\partial^-v_a(f,r)$).  
\item If $f$ is finite, then $f^{-1}(D(a,r^-))$ (resp. $f^{-1}(D(a,r))$) is a finite disjoint union of open (resp. closed) discs in $D_1$ and restriction of $f$ to each of them is a finite morphism to $D(a,r^-)$ (resp. $D(a,r)$).
\item The morphism $f$ induces a continuous increasing  bijection between the sets $l_a$ and $l_{f(a)}$, given by $r\mapsto r'$, where $r'$ is such that $D(f(a),r')=f(D(a,r))$.
\een
 \end{lemma}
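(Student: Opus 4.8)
The plan is to regard (1) as given — it is precisely the content of the Newton‑polygon discussion preceding the statement — and to spend the work on (2) and (3). For (1), in compatible coordinates $T$ centered at $a$ and $S$ centered at $f(a)$, write $f$ as $S=S(T)=\sum_{i\ge 1}a_iT^i$; then with $\lambda=-\log r$ one has $f(D(a,r^-))=D(f(a),{r'}^-)$ and $f(D(a,r))=D(f(a),r')$ with $r'=e^{-v_a(f,\lambda)}$, and for every $\beta\in k$ with $|\beta|<r'$ the valuation polygon of $S(T)-\beta$ agrees with $v_a(f,\cdot)$ to the right of $\lambda$, so by property 2 of valuation polygons the equation $S(T)=\beta$ has exactly $\partial^+v_a(f,\lambda)$ solutions in $D(a,r^-)$ with multiplicity; as $f|_{D(a,r^-)}$ is a flat morphism of smooth curves onto a connected target it is finite of that degree, and likewise for closed discs with $\partial^-$. (One may assume $a$ of type $1$, the general case reducing to this or following from the reduction to $\wtilde f\colon\sC_y\to\sC_x$ already invoked above.)

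For (2) I would transfer the question to the target disc. Pick a $k$-point $b\in D(a,r^-)$ and a coordinate $S$ on $D_2$ with $S(b)=0$ and $D(a,r^-)=\{|S|<r\}$, and set $g:=S\circ f\in\cO(D_1)$, so that $f^{-1}(D(a,r^-))=\{x\in D_1:|g(x)|<r\}$. Since $f$ is finite, $g$ vanishes only on the finite set $f^{-1}(b)$, so on the whole open disc $D_1$ (in any coordinate $T$) one has a global Weierstrass factorization $g=P(T)\,u(T)$ with $P$ a polynomial whose roots are the zeros of $g$ and $u\in\cO(D_1)^\times$. A unit on a disc has no zeros, so (property 2) its valuation polygon has only zero slopes, hence is constant; applying the maximum modulus principle to $u$ and to $u^{-1}$ then gives $|u|\equiv\gamma$ on all of $D_1$ for a constant $\gamma>0$. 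Therefore $f^{-1}(D(a,r^-))=\{x\in D_1:|P(x)|<r/\gamma\}$ is the trace on $D_1$ of the lemniscate $\{|P|<\rho\}$ of a polynomial, which is a finite disjoint union of open discs of $\A^1_k$ (\cf \cite{lazard}); since two discs in $\A^1_k$ are nested or disjoint, its intersection with the disc $D_1$ is again a finite disjoint union of open discs. Each such component $W$ is clopen in $f^{-1}(D(a,r^-))$, and the latter is finite over $D(a,r^-)$ (finiteness being stable under the base change $D(a,r^-)\hookrightarrow D_2$), so $f|_W\colon W\to D(a,r^-)$ is finite; being flat over a smooth curve it is open, so its image is all of the connected curve $D(a,r^-)$. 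The closed‑disc statement is identical, using $\{|P|\le\rho\}$, a finite disjoint union of closed discs.

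For (3), by (1) the morphism $f$ sends the maximal point $a_r=\zeta_{a,r}$ of $D(a,r)$ to the maximal point $\zeta_{f(a),r'}$ of $f(D(a,r))=D(f(a),r')$, so $f(l_a)\subseteq l_{f(a)}$ and the induced map is $r\mapsto r'(r)=e^{-v_a(f,-\log r)}$. Continuity of $v_a(f,\cdot)$ (property 1) gives continuity of $r'$; its slopes are the degrees $\partial^+v_a(f,\cdot)\ge 1$ (as $f$ is non‑constant), so $v_a(f,\cdot)$, and with it $r\mapsto r'(r)$, is strictly increasing. Finally $v_a(f,\lambda)\to\infty$ as $\lambda\to\infty$ gives $r'(r)\to 0$ as $r\to 0$, while finiteness of $f$ — hence $f$ closed and dominant onto the connected $D_2$, so surjective — forces $\sup_{x\in D_1}r_{D_2}(f(x))=1$, i.e. $\sup_r r'(r)=1$; thus $r\mapsto r'$ is a homeomorphism of $[0,1)$ onto itself. (Finiteness of $f$ is genuinely used here for surjectivity: for $S=\tfrac12 T$ the induced map $l_a\to l_{f(a)}$ is not onto.)

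The main obstacle is the disc assertion in (2): a connected component of $f^{-1}(D(a,r^-))$ is a priori only a connected open subcurve of $D_1$ finite over an open disc, and one must exclude that it is an annulus or carries more complicated topology. The mechanism that resolves this is the reduction — via global Weierstrass preparation and the constancy of $|u|$ for a unit $u$ — to the purely polynomial fact that a lemniscate is a finite disjoint union of discs, and this is precisely where property 2 of valuation polygons (Newton‑polygon bookkeeping) does the substantive work. Everything else is formal: base change for finiteness, openness of flat morphisms of smooth curves, ultrametric nesting of discs, and the continuity and monotonicity of valuation polygons for (3).
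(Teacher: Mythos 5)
Your argument is correct and, where the paper actually gives a proof, follows the same route: assertion (1) is exactly the content of the Newton--polygon discussion preceding the lemma, which you reproduce, while the paper dismisses (2) and (3) with ``not difficult to prove using the properties 1) and 2) of valuation polygons'' and supplies no details. What you add is a genuine proof of (2): reducing $f^{-1}(D(a,r^-))$ to a polynomial lemniscate via a global factorization $g=P\,u$ on the source disc, with $|u|$ constant because a unit has a slope-zero valuation polygon, is precisely the valuation-polygon bookkeeping the authors are gesturing at, and your treatment of the components (clopen, finite and flat over the connected target, hence open and closed, hence surjective) is clean; your (3) is the standard monotonicity-plus-surjectivity argument. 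Two cosmetic points. First, the valuation polygon of $S(T)-\beta$ agrees with $v_a(f,\cdot)$ only between $\lambda$ and the crossing value $\mu^*$ where $v_a(f,\mu^*)=v(\beta)$, after which it becomes flat; this is all you need to compute $\partial^+$ at $\lambda$, but it should be stated. Second, the parenthetical counterexample $S=\tfrac12 T$ does not work over a $p$-adic field: for $p\neq 2$ one has $|1/2|=1$ and the map is an isomorphism, while for $p=2$ it does not preserve the unit disc; take $S=cT$ with $0<|c|<1$ instead, which does exhibit a quasi-finite, non-finite map for which $r\mapsto r'$ fails to be onto.
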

 
 In the light of the Lemma we give some definitions.
 \begin{defn}
  Let $f:D_1\to D_2$ be a finite morphism of open unit discs and $a\in D_1(k)$. Let $T$ and $S$ be compatible coordinates for $f$ where $T$ is centered at $a$. Then, we call the function 
  $$
  \pf_{a,f}=\pf_{(T,S),f}:[0,1]\to [0,1]\quad \text{ given by } \pf_{a,f}(\rho)=\begin{cases}0 \quad \text{if}\quad \rho=0,\\
1 \quad \text{if}\quad \rho=1,\\
\rho' \quad \text{otherwise},
\end{cases}
 $$ 
 where $\rho'$ is such that $f(D(a,\rho)=D(a,\rho'))$, the $(T,S)$-profile of $f$ or the profile of $f$ at $a$.   
 \end{defn}

 \begin{rmk}\label{rmk: profile valuation}
 The relation between the profile of $f$ at $a$ and the valuation polygon of the morphism $f$ at $a$ is given by 
$$
\forall r\in (0,1),\quad v_{a}(f,-\log r)=-\log \pf_{a,f}(r).
$$ 
From this relation one concludes, having in mind the basic properties of valuation polygons, that $\pf_{a,f}$ is a continuous, piecewise monomial and increasing function. 
\end{rmk}
%The following is an easy consequence of  and our discussion above.
 \begin{lemma}\label{lem: mult relation}  
  Let $f:D_1\to D_2$ be a finite morphism of open unit discs and let $x\in D_2$ be a point of type 2. Then
  $$
  \sum_{y \in f^{-1}(x)}\nu_f(y) = \deg(f).
  $$
 \end{lemma}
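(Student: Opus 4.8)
The plan is to prove the identity $\sum_{y \in f^{-1}(x)} \nu_f(y) = \deg(f)$ for a point $x \in D_2$ of type $2$ by relating both sides to the degree of the induced morphism on the residue curves, exactly as in the discussion preceding Lemma~\ref{lem: basic discs}. First I would choose a $k$-rational point $b \in D_2(k)$ whose path to the exit passes through $x$, so that $x = \zeta_{b,r}$ for some $r = e^{-\lambda} \in (0,1) \cap |k^\times|$; such a point exists since $D_2(k)$ is dense and every point of type $2$ is the maximal point of some closed disc with $k$-rational centre and radius in $|k^\times|$. Since $f$ is finite, Lemma~\ref{lem: basic discs}(2) gives that $f^{-1}(D(b,r))$ is a finite disjoint union of closed discs $D^{(1)},\dots,D^{(m)}$, and the maximal point $y_j$ of each $D^{(j)}$ is a point of $f^{-1}(x)$; conversely every preimage of $x$ arises this way, because $x$ has no other closed-disc neighbourhoods inside $D_2$. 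So $f^{-1}(x) = \{y_1,\dots,y_m\}$.

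Next I would compute $\nu_f(y_j)$. By the discussion before Lemma~\ref{lem: basic discs}, $\nu_f(y_j) = [\sH(y_j):\sH(x)] = [\wtilde{\sH(y_j)}:\wtilde{\sH(x)}]$ equals the degree of the finite morphism $\wtilde{f}_j : \sC_{y_j} \to \sC_x$ of smooth projective $\kt$-curves induced by the residue field extension, where both $\sC_{y_j}$ and $\sC_x$ are copies of $\P^1_{\kt}$. On the other hand, restricting $f$ to the closed disc $D^{(j)}$ gives a finite morphism $D^{(j)} \to D(b,r)$ whose degree I claim is exactly $\deg \wtilde{f}_j$: indeed, picking a $k$-rational point $a_j \in D^{(j)}(k)$ with $f(a_j) = b$ (possible after translating $b$, or just taking any point; one can arrange $f(a_j)$ $k$-rational) and applying Lemma~\ref{lem: basic discs}(1), this degree is $\partial^- v_{a_j}(f,\lambda)$, which by the Newton polygon / residue curve identification in the preamble equals $\deg \wtilde f_j$ — the number of preimages of a generic tangent vector at $x$, counted with multiplicity, under $\wtilde f_j$. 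Finally, summing over $j$: $\sum_j \deg(f_{|D^{(j)}}) = \deg(f)$ because $\{D^{(1)},\dots,D^{(m)}\} = f^{-1}(D(b,r))$ is the full preimage of the connected disc $D(b,r)$ and degrees of a finite morphism add over the connected components of a preimage of a connected space. Combining the three displays yields $\sum_{y \in f^{-1}(x)} \nu_f(y) = \sum_{j} \deg \wtilde f_j = \sum_j \deg(f_{|D^{(j)}}) = \deg f$.

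The main obstacle I expect is the bookkeeping needed to make the identification $\nu_f(y_j) = \deg(f_{|D^{(j)}})$ fully rigorous, i.e. to check carefully that the geometric ramification index $\nu_f$ of \cite[\S 6.3]{BerkovichEtale} at the maximal point of a closed disc coincides with the degree of $f$ on that disc as a morphism of closed discs. This is essentially the content of the paragraph preceding Lemma~\ref{lem: basic discs} (via $[\sH(y):\sH(x)]$ being the sum of multiplicities of $\wtilde f$ at all tangent points, together with the stability of $\sH(x)$, which forces $[\wtilde{\sH(y)}:\wtilde{\sH(x)}] = [\sH(y):\sH(x)]$ with no defect), so I would simply invoke that discussion rather than redo it. A minor point to address is independence of the construction from the auxiliary choice of $b$: one checks that $f^{-1}(x)$ and the quantities $\nu_f(y_j)$, $\deg(f_{|D^{(j)}})$ are intrinsic to $x$ and $f$, which is immediate since $x$ and the germs of $f$ along $f^{-1}(x)$ determine everything.
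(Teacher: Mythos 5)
Your proof is correct, but it takes a genuinely different route from the paper: the paper simply cites \cite[Remark 6.3.1]{BerkovichEtale}, where this additivity of the geometric ramification index is established in general, whereas you give a self-contained derivation in the special case of discs. Your argument is the natural one given the toolkit the paper has already assembled: writing $x=\zeta_{b,r}$, decomposing $f^{-1}(D(b,r))$ into closed discs $D^{(j)}$ via Lemma~\ref{lem: basic discs}\,(2), identifying each $y_j\in f^{-1}(x)$ with the unique Shilov point of $D^{(j)}$, and then chaining the identifications $\nu_f(y_j)=[\sH(y_j):\sH(x)]=\deg\wtilde f_j=\partial^-v_{a_j}(f,\lambda)=\deg(f_{|D^{(j)}})$ from the discussion preceding Lemma~\ref{lem: basic discs} before summing degrees over the connected components of $f^{-1}(D(b,r))$. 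All the steps check out; the only points worth tightening are (i) the claim that every preimage of $x$ is the \emph{maximal} point of its component $D^{(j)}$ --- this follows either from the preimage of the Shilov boundary being contained in the Shilov boundary, or from Lemma~\ref{lem: basic discs}\,(3), since a non-maximal point of $D^{(j)}$ maps to a point of strictly smaller relative radius --- and (ii) the existence of a $k$-rational $a_j\in D^{(j)}$ above $b$, which is just surjectivity of the finite map $D^{(j)}\to D(b,r)$ together with $k$ being algebraically closed. What the citation buys is brevity and generality (the identity holds for finite morphisms of arbitrary quasi-smooth curves at any point of type $\ge 2$, not just for discs); what your proof buys is transparency, since it exhibits the identity as nothing more than additivity of the degree of $f$ over the connected components of $f^{-1}(D(b,r))$, using only facts the paper has already stated.
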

 \begin{proof}
 See \cite[Remark 6.3.1.]{BerkovichEtale}.
 \end{proof}
%\begin{proof}
% Let $D_1,\dots,D_n$ be all the preimages in $D_1$ of the disc $D(a,r)$, and let $x_i$ be the preimage of $\zeta_{a,r}$ which is in the disc $D_i$. By Lemma \ref{lem: basic discs}, $f_i:D_i\to D(a,r)$ is a finite morphism of closed discs of degree $d_i=\mu_f(x_i)$, for all $i=1,\dots,n$. Then, the number of preimages of the point $a$ in the disc $D_i$, counted with their respective multiplicities, is precisely $d_i$. On the other side, the total number of the preimages of $a$ in the disc $D_1$, again counted with their respective multiplicities, is $\deg(f)$. The claim follows. 
%\end{proof}

\begin{cor}\label{cor: num preimages}
 Keeping the notation as in the lemma, if all the preimages of $x$ have the same geometric ramification index, say $\nu$, then $\#f^{-1}(x)=\deg(f)/\nu$.
\end{cor}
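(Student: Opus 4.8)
The plan is to deduce this immediately from Lemma~\ref{lem: mult relation}. Since $f$ is finite it is in particular quasi-finite, so the fiber $f^{-1}(x)$ is a finite set; write $f^{-1}(x) = \{y_1,\dots,y_m\}$ with $m = \#f^{-1}(x)$. The point $x$ being of type $2$, as in the standing hypothesis inherited from the Lemma, Lemma~\ref{lem: mult relation} applies and yields
\[
\sum_{i=1}^{m}\nu_f(y_i) = \deg(f).
\]
Under the assumption that $\nu_f(y_i) = \nu$ for every $i$, the left-hand side equals $m\nu$, so $m\nu = \deg(f)$, i.e. $\#f^{-1}(x) = \deg(f)/\nu$, as claimed. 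In particular this forces $\nu \mid \deg(f)$, which is a consistency check but is not needed elsewhere.

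There is essentially no obstacle here: the entire content is the additivity of the geometric ramification index along a fiber over a type-$2$ point, which is precisely Lemma~\ref{lem: mult relation} (quoted there from \cite[Remark~6.3.1]{BerkovichEtale}). The only things worth recording explicitly are that $f^{-1}(x)$ is finite and that the hypotheses of that lemma are met in the present situation, both of which are immediate; no analysis of the reduction $\wtilde f$ or of the individual ramification indices is required. The role of this corollary is purely as a bookkeeping device, to be invoked later whenever all preimages of a type-$2$ point are known to share the same ramification index (for instance in the residually purely inseparable case of Section~2.5).
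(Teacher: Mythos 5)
Your proof is correct and is exactly the intended argument: the paper states this corollary without a separate proof precisely because it follows immediately from Lemma~\ref{lem: mult relation} by summing the constant ramification index $\nu$ over the $m=\#f^{-1}(x)$ points of the fiber. Nothing further is needed.
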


% \begin{rmk}
%  Our definition of multiplicity of a point corresponds to the geometric ramification index in \cite[Section 6.3.]{BerkovichEtale}. The equivalence of the two notions follows from our definition and Remark 6.3.1 (iii) in \lc.
% \end{rmk}
% 

\subsection{(Weakly) $n$-radial morphisms}
%  \V{Is it needed to put the definition for both closed and open discs at the same time?}
 \begin{defn}\label{defn: radial discs}
 Let $f:D_1\to D_2$ be a finite morphism of open unit discs. We say that it is \emph{radial} if the functions $v_{a}(f,\cdot)$ (or equivalently, the functions $\pf_{a,f}$) are the same for all $a\in D_1$. If $f$ is radial  we will simply write $v(f,\cdot)$ and $\pf_f$ instead of $v_{a}(f,\cdot)$ and $\pf_{a,f}$, respectively, and call the latter function the \emph{profile} of $f$.
\end{defn}

\begin{rmk}\label{rmk: radiality and multiplicity}
If $f:D_1\to D_2$ is radial, and $\rho\in (0,1)$, then for any $a\in D_1(k)$ $\nu_f(\zeta_{a,\rho})$ does not depend on $a$. Indeed, by definition $\nu_f(\zeta_{a,\rho})=\partial^-v_a(f,-\log \rho)=\partial^-v(f,-\log \rho)$ which does not depend on $a$. 
\end{rmk}

\begin{rmk}
 Radial morphisms of open discs were first introduced in \cite[Section 2.3.]{TeMU} to which we refer for their main properties. 
\end{rmk}

It follows from the definition that to check whether the morphism $f$ is radial one picks,   for any point $a \in D_1(k)$,  a pair of compatible coordinates $(T,S)$ for $f$, where $T$ is centered at $a$ and then  compares the profile functions $\pf_{a,f}$.  
An obvious choice of compatible coordinates, for any $a\in D_1(k)$, 
%which we describe next. Namely, let $(T,S)$ be any pair of compatible coordinates for $f$. 
%For an $a\in D_1(k)$, we let 
is
$T_a:=T-T(a)$ and $S_{f(a)}:=S-S(f(a))$. 
%Then, $T$ and $S$ are compatible, $T_a$ is centered at $a$ and it follows from Taylor expansion that 
The $(T_a,S_{f(a)})$-expansion of $f$ is given by 
$$
S_{f(a)}=\sum_{i\geq 1} f^{[i]}(a)T^i_a, \quad\text{where}\quad f^{[i]}(T):=\frac{1}{i!}\frac{\partial^i f(T)}{\partial T^i}.
$$
%We will call the functions $f^{[i]}(T)$ the coordinate functions (for obvious reasons).
In this way, a finite morphism $f:D_1\to D_2$ of open unit discs is radial if and only if the valuation polygon of the function  $\sum_{i\geq 1} f^{[i]}(a)T^i_a$ is the same for all $a\in D_1(k)$.

\subsubsection{} A generalization of radial morphisms are the (weakly) $n$-radial ones (\cf \cite{BojFact}).
\begin{defn} We say that a finite morphism $f:D_1\to D_2$ of open unit discs is \emph{$n$-radial}, where $n\in \N$ ($0\notin \N$), if there exists a number $r\in (0,1)$ such that:
\begin{enumerate}
 \item for every $a\in D_1(k)$ the restriction of $v_{a}(f,\cdot)$ on $(0,-\log r)$ does not depend on $a$;
 \item $v_{a}(f,\cdot)$ has exactly $n$ slopes on $(0,-\log r)$.
\end{enumerate}
The infimum of the numbers $r$ above is denoted by $\b_{f,n}$ and is called the \emph{border of $n$-radiality}. The slopes of the valuation polygon of $f$ at $a$ (independent of $a\in D_1(k)$) over $(0,-\log r)$ are called the $n$-\emph{dominating terms} of $v_a(f,\cdot)$  and we denote their set by $\Dc_{f,n}$.  
Finally, by a \emph{$0$-radial} morphism we will simply mean a finite one.
%Finally, we will take $0$-radial morphism by definition to mean a finite morphism.
\end{defn}

\begin{defn}\label{def: weakly n}
 Let $f:D_1\to D_2$ be a finite morphism of open discs. We will say that $f$ is \emph{weakly $(n+1)$-radial}, where $n\in \N$, if the following holds:
 \begin{enumerate}
  \item $f$ is $n$-radial.
  \item The first $n+1$ slopes of the valuation polygon $v_{a}(f,\cdot)$ do not depend on the choice $a\in D_1(k)$. The set of these slopes will still be  denoted by $\Dc_{f,n+1}$. 
 \end{enumerate}

\end{defn}
As we see every $n$-radial morphism is weakly $n$-radial (for $n>0$), while weakly $n$-radial does not necessarily
 imply   $n$-radiality, as is shown in Remark \ref{rmk: only weakly}.

\begin{lemma}\label{lem: weakly 1}
 Every finite morphism of open unit discs $f:D_1\to D_2$ is weakly $1$-radial.
\end{lemma}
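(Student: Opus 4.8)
The plan is to reduce the assertion to an elementary computation with the Newton polygon of a single power series. By Definition~\ref{def: weakly n} applied with $n=0$, and since $f$ is finite, hence $0$-radial, it remains only to show that the \emph{first} slope of the valuation polygon $v_a(f,\cdot)$ — \ie its slope $\partial^+v_a(f,\lambda)$ for $\lambda>0$ small, equivalently the slope on the leftmost affine piece — is independent of $a\in D_1(k)$. Fix $a$ and choose compatible coordinates $T_a$ centered at $a$ and $S_{f(a)}$ centered at $f(a)$, so that $v_a(f,\cdot)=v(g_a,\cdot)$ with $g_a(T)=\sum_{i\geq 1}f^{[i]}(a)\,T^i$. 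The claim then becomes: the leftmost slope of the valuation polygon of $g_a$ equals $\deg f$, for every $a$.

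The key input is that, $f$ being finite of degree $d:=\deg f$, the fibre $f^{-1}(f(a))$ consists of exactly $d$ points counted with multiplicity; equivalently, $g_a$ has exactly $d$ zeros in the open unit disc counted with multiplicity (this is the coordinate translation of that fact, and can also be read off Lemma~\ref{lem: basic discs} applied to the discs $D(a,\rho)$ as $\rho\to 1$). Among these zeros, $m(a):=\mathrm{ord}_{T=0}g_a$ lie at the origin, while the remaining $d-m(a)$ have finite, strictly positive valuations, which by property~(2) of valuation polygons are precisely the break values of $v(g_a,\cdot)$; in particular there are finitely many break values, all positive, so $v(g_a,\cdot)$ is affine on some interval $(0,\lambda_0)$. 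By the standard description of the slopes of a valuation polygon in terms of the valuations of the zeros (\cf \cite{lazard} and property~(2) above), for $\lambda\in(0,\lambda_0)$ the slope $\partial^-v(g_a,\lambda)$ equals the number of zeros of $g_a$ of valuation $\geq\lambda$ — that is, all $d$ of them, the $m(a)$ zeros at the origin included. Hence the leftmost slope of $v_a(f,\cdot)$ is $\deg f$, independently of $a$.

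This verifies condition~(2) of Definition~\ref{def: weakly n} for $n=0$, with $\Dc_{f,1}=\{\deg f\}$, while condition~(1) is trivial; so $f$ is weakly $1$-radial. I expect the only delicate point in the write-up to be the identification of the leftmost slope of $v(g_a,\cdot)$ with the total number of zeros of $g_a$ in the open disc: this is exactly where finiteness of $f$ is used — it guarantees that $g_a$ has finitely many zeros, so that ``the leftmost slope'' is well defined and already accounts for all of them — and one must be careful to include the $m(a)$ zeros at the origin in that count, since otherwise the bookkeeping fails (the morphism $S=T^2$, which the present lemma shows to be weakly $1$-radial although it is not $1$-radial, is a good test case).
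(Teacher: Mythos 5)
Your proof is correct and takes essentially the same approach as the paper: both arguments identify the leftmost (equivalently, highest) slope of the valuation polygon at $a$ with $\deg f$ by counting a full fibre of $f$, using finiteness to guarantee that this slope is well defined. The only cosmetic difference is that you count the fibre over $f(a)$ itself (hence must carefully include the zeros at the origin, which you do), whereas the paper counts the solutions of $f_{(T,S)}=c$ for an arbitrary $c\in D_2(k)$.
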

\begin{proof}
 The morphism is trivially $0$-radial by definition. Suppose that $f$ is of degree $d$, and let $(T,S)$ be any pair of compatible coordinates on $D_1$ and $D_2$. The highest (which is the first) slope of $v_{(T,S)}(f,\cdot)$ is then necessarily equal to $d$, as this represents the number of solutions of the equation $f_{(T,S)}=c$, for any $c\in D_2(k)$. The claim follows.
\end{proof}

\begin{rmk}\label{rmk: only weakly} Assume that the field $k$ is of mixed characteristics $(0,p)$, where $p>2$. Let $f:D_1\to D_2$ be a finite morphism of open unit discs given by $S=T^{2p}+\alpha T$, where $1>|\alpha|>|p|$. Then $f$ is \'etale and for $a\in D_1(k)$, its $(T_a,S_{f(a)})$ expansion is given by 
$$
S_{f(a)}=T^{2p}_a+\sum_{i=1}^{2p-1}\binom{2p}{i}a^{2p-i}\,T_a^i+\alpha T_a.
$$
 If $a$ is such that $|a|^p>|\alpha|$, then since $\left|\binom{2p}{i}\right|=1$, for $i=p$ and $i=2p$, and $\left|\binom{2p}{i}\right|=|p|$ otherwise, it follows that the slopes of the valuation polygon of $f$ at $a$ are $1,\, p$ and $2p$ and the two break points $b_1$ and $b_2$ are given by $b_1=-\log|a|$ and $b_2=-\frac{1}{p-1}(\log|\alpha|-p\log|a|)$. Obviously, they both vary with $|a|$ so in particular, $f$ is not $1$-radial.
\end{rmk}

%  
% \begin{rmk} We recall the factorization properties of radial and $n$-radial morphisms. 
% \end{rmk}

\subsubsection{}

We point out that,  if $f:D_1\to D_2$ is an $n$-radial morphism of open unit discs, then for any $a\in D_1(k)$, $| f^{[i_n]}(a)|$ does not depend on $a$, so we will write $|f^{[i_n]}|$ instead of $|f^{[i_n]}(a)|$ in what follows.

\begin{defn}
 Let $f:D_1\to D_2$ be a finite \'etale morphism of open unit discs, and suppose that it is weakly $n$-radial. Let $i_1>\dots>i_n$ be all the elements in $\Dc_{f,n}$. We define the function $\theta_n:D_1(k)\to [0,1)$, given by 
 $$
 a\in D_1(k)\mapsto \min\bigg\{\rho\in [0,1)\mid \max_i\Big\{|f^{[i]}(a)|\rho^i\Big\}=|f^{[i_n]}|\rho^{i_n}\bigg\},
 $$
 and call it \emph{the exact $n$-boundary function}. 
 \end{defn}
In other words, either $\theta_n(a)=0$, either $-\log(\theta_n(a))$ is the $n$-th break of the valuation polygon $v_a(f,\cdot)$ (see Figure \ref{fig: valuation polygon}). 
 
\begin{figure}[ht] 
\centering
\begin{picture}(330,220)(-50,0)     % dimension de la boite  300pt de largeur, 170pt de hauteur, translatÃ¯Â¿Â½e de -50 points dans l'axe horizontal
\put(0,10){\vector(1,0){300}}       %vecteur horizontal commencant en (0,25)  de longueur 200
\put(20,0){\vector(0,1){200}}         %vecteur vertical commencant en (60,0)  de longueur 120
\thicklines              % a partir de maintenant les lignes seront plus Ã¯Â¿Â½paises
\put(0,180){\makebox(0,0)[b]{$v_a(f,\lambda)$}}
\put(20,10){\line(1,2){41}}

\put(60,90){\makebox(0,0){$\ast$}}
\put(60,90){\line(1,1){41}}

\put(100,130){\makebox(0,0){$\ast$}}
\put(140,145){\makebox(0,0){$\ast$}}
\put(140,145){\line(3,1){50}}
\put(200,173){\makebox(0,0)[b]{}}
\put(190,162){\makebox(0,0){$\ast$}}
%\put(230,177){\line(1,0){50}}
\put(190,162){\line(5,1){50}}
\put(180,-5){\makebox(0,0)[b]{$-\log(\theta_n(a))$}}
\put(15,45){\makebox(0,0)[b]{slope $i_1$}}
\put(55,105){\makebox(0,0)[b]{slope $i_2$}}
\put(143,153){\makebox(0,0)[b]{slope $i_n$}}
\put(180,167){\makebox(0,0)[b]{slope $i_{n+1}(a)$}}

\put(265,175){\makebox(0,0){$\dots$}}

\put(122,135){\makebox(0,0){$\dots$}}
\put(180,0){\makebox(0,0)[b]{}}

\put(290,0){\makebox(0,0)[b]{$\lambda$}}
\put(190,10){\dashbox{2}(0,151){}}
\put(141,10){\dashbox{2}(0,135){}}
\put(60,10){\dashbox{2}(0,80){}}
\put(100,10){\dashbox{2}(0,120){}}

\end{picture}
\caption{The valuation polygon $v_a(f,\cdot)$.}\label{fig: valuation polygon}
\end{figure}
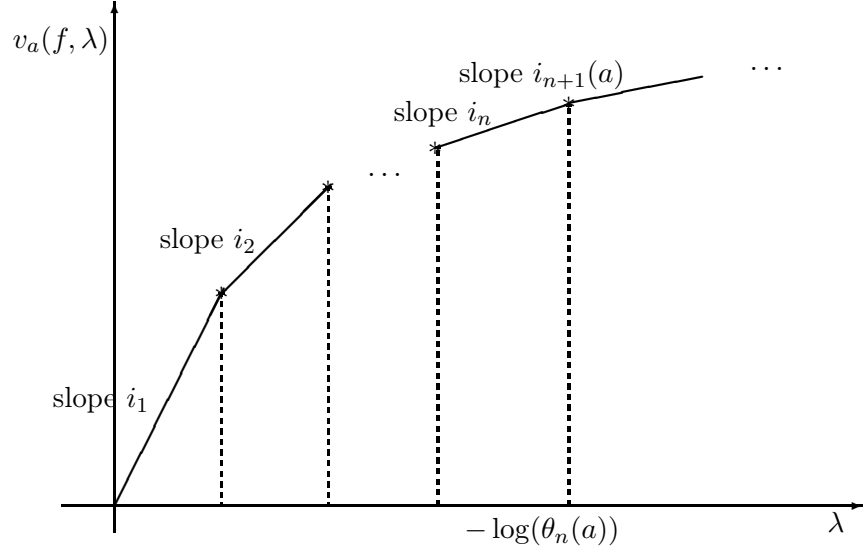
 
\subsubsection{}\label{rmk: important}
 We list some properties of the function $\theta_n$.
 
  (1) $\theta_n(a)$ does not depend on the chosen compatible coordinates on $D_1$ and $D_2$ with respect to which we calculate the terms $f^{[i]}(a)$ in the definition. This follows from property (3) of valuation polygons.
  
  (2) If the exact border of $n$-radiality is 0, then the morphism is radial. 
 
  (3) For $a\in D_1(k)$, the first $n$ slopes of the $(T_a,S_{f(a)})$-valuation polygon of $f$ are in $\Dc_{f,n}$. Then, there are two possible cases: 1) $\theta_n(a)=0$ which is equivalent to say $i_n=1$ (since $f$ is \'etale, the smallest slope is always 1) and the morphism $f$ is radial; 2) $\theta_n(a)\neq 0$, hence $i_n>1$. In this latter case we define the function $i_{n+1}:D_1(k)\to \N$ where $i_{n+1}(a)$ is the first next slope of the $(T_a,S_{f(a)})$-valuation polygon of $f$ (see Figure \ref{fig: valuation polygon}). In other words, $i_{n+1}(a)$ is minimal index $i_{n+1}(a)$ with the following property:
  $$
  |f^{[i_{n+1}(a)]}(a)|(\theta_n(a))^{i_{n+1}(a)}=|f^{[i_n]}|(\theta_n(a))^{i_n}. 
  $$
  Consequently we have
  $$
  \theta_n(a)=\left(\frac{|f^{[i_{n+1}(a)]}(a)|}{|f^{[i_n]}|}\right)^{\frac{1}{i_n-i_{n+1}(a)}}.
  $$
  
  (4) Suppose now and until Corollary \ref{cor: theta const} that $i_n>1$ so that $\theta_n(a)>0$ for every $a\in D_1(k)$. In this case we have, for every $a\in D_1(k)$, $\theta_n(a)=\rho$, where $\rho$ is such that 
  \begin{equation}\label{eq: special case}
  \max_{1\leq i<i_n}\{|f^{[i]}(a)|\rho^i\}=|f^{[i_n]}|\rho^{i_n}.
  \end{equation}

  For each $i=1,\dots,i_n-1$ let us define the functions 
  \begin{align}
  \theta_{n,i}:D_1(k)&\to [0,1)\nonumber\\
  a&\mapsto \rho \quad \text{such that}\quad  |f^{[i]}(a)|\rho^i=|f^{[i_n]}|\rho^{i_n}\nonumber,
  \end{align}
  that is
  \begin{equation}\label{theta n i}
  \theta_{n,i}(a)=\left(\frac{|f^{[i]}(a)|}{|f^{[i_n]}|}\right)^{\frac{1}{i_n-i}}.
  \end{equation}
   Formula \eqref{eq: special case} then implies that, for every $a\in D_1(k)$
  \begin{align*}
   \theta_n(a)&=\max_{1\leq i<i_n}\{\theta_{n,i}(a)\}\nonumber\\
   &=\max_{1\leq i<i_n}\Big\{\left(\frac{|f^{[i]}(a)|}{|f^{[i_n]}|}\right)^{\frac{1}{i_n-i}}\Big\}.\label{eq: theta n new}
  \end{align*}

  (5) The coefficient functions $f^{[i]}(T)$ are analytic functions on the open unit disc and from the valuation polygon theory it follows that  the following property holds: for every interval (assumed always to be connected) $I'\subset [0,1)$, there exists a subinterval $I=(r_1,r_2)\subset I'$ such that for every $a\in A(0;r_1,r_2)(k)$ and every $i=1,\dots,i_n-1$, we have $|f^{[i]}(a)|=|f^{[i]}(T)|_{|a|}$.
  
  Moreover, by shrinking $I$ if necessary, we may assume that for every $i\neq j$ and both in $\{1,\dots,i_n-1\}$ we have
  $$
  \left(\frac{|f^{[i]}(T)|_{|a|}}{|f^{[i_n]}|}\right)^{\frac{1}{i_n-i}}\neq \left(\frac{|f^{[j]}(T)|_{|a|}}{|f^{[i_n]}|}\right)^{\frac{1}{i_n-j}}.
  $$
  
 In this way we have proven 
  \begin{lemma}\label{lem: special intervals}
   Let $I'\subset (0,1)$ be an interval. Then, there exists a subinterval $I=(r_1,r_2)\subset I'$ and an $i\in \{1,\dots,i_n-1\}$ such that for every $a\in A(0;r_1,r_2)(k)$, $i_{n+1}(a)=i$ and 
   $$
   \theta_n(a)=\left(\frac{|f^{[i]}(T)|_{|a|}}{|f^{[i_n]}|}\right)^{\frac{1}{i_n-i}}.
   $$
  \end{lemma}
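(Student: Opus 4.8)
The plan is to reduce the statement to the elementary fact that the pointwise maximum of finitely many monomial functions of one real variable has a locally constant set of maximizers, after first arranging the functions $a\mapsto|f^{[i]}(a)|$, $1\leq i<i_n$, to be honest monomials in $|a|$ on a small annulus; this is exactly the line of argument sketched in \S\ref{rmk: important}.

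First I would record the input from valuation polygon theory (properties 1) and 2) above, \cf \cite{lazard}). If $g$ is a nonzero analytic function on the open unit disc, then in any closed subdisc of radius $<1$ it has finitely many zeros, so the set of absolute values of its zeros --- equivalently, by property 2), the set of break values of $v(g,\cdot)$ --- has no accumulation point in $(0,1)$, and in particular meets any compact subinterval of $I'$ in a finite set. Away from these break values $v(g,\cdot)$ is affine, so on an annulus $A(0;r_1,r_2)$ whose radii avoid all of them $g$ has no zero, and hence $|g(a)|=|g(T)|_{|a|}=c\,|a|^{s}$ for every $a\in A(0;r_1,r_2)(k)$, with $c=c(g)>0$ and $s=s(g)\in\Z_{\geq0}$ fixed. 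Applying this simultaneously to the \emph{finitely many} functions $f^{[1]},\dots,f^{[i_n-1]}$ (each nonzero since $f$ is a nonconstant polynomial-like map of degree $>i_n-1$, and a finite union of locally finite sets is locally finite) I obtain a subinterval, still denoted $I=(r_1,r_2)\subset I'$, on which $|f^{[i]}(a)|=|f^{[i]}(T)|_{|a|}$ is a monomial in $|a|$ for every $i\in\{1,\dots,i_n-1\}$ and every $a\in A(0;r_1,r_2)(k)$.

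Next, since we are in the case $i_n>1$ (in force throughout \S\ref{rmk: important}, until Corollary \ref{cor: theta const}), the formula derived there gives $\theta_n(a)=\max_{1\leq i<i_n}\theta_{n,i}(a)$, and by \eqref{theta n i} together with the previous paragraph each $\theta_{n,i}(a)=\bigl(|f^{[i]}(T)|_{|a|}/|f^{[i_n]}|\bigr)^{1/(i_n-i)}=A_i\,|a|^{m_i}$ is, on $A(0;r_1,r_2)(k)$, a monomial in $|a|$ alone, with $A_i>0$ and $m_i$ a nonnegative rational number. For any $i\neq j$ the equality $A_i\,r^{m_i}=A_j\,r^{m_j}$ holds either for no $r>0$, or for all $r>0$, or for exactly one $r>0$; shrinking $I$ once more so as to avoid the finitely many ``crossing radii'' of the last kind, the set of indices $i$ realizing $\max_{1\leq i<i_n}\theta_{n,i}(a)$ is the same for all $a\in A(0;r_1,r_2)(k)$. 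Its least element is, by definition, $i_{n+1}(a)$; hence $i_{n+1}(a)$ equals a fixed index $i\in\{1,\dots,i_n-1\}$ independent of $a$, and $\theta_n(a)=\theta_{n,i}(a)=\bigl(|f^{[i]}(T)|_{|a|}/|f^{[i_n]}|\bigr)^{1/(i_n-i)}$ for every such $a$, which is the assertion. (Note that one does not actually need the $\theta_{n,i}$ to be pairwise distinct on $I$, only the set of maximizers to be locally constant.)

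There is no serious obstacle here: the proof is a finite sequence of interval shrinkings. The two points requiring a little care are that the break values of $f^{[1]},\dots,f^{[i_n-1]}$ do not accumulate in the interior of $I'$ --- so that a genuine open subinterval survives the first shrinking --- and that on the resulting annulus $|f^{[i]}(\cdot)|$ is a single monomial rather than merely a piecewise monomial function; both follow at once from the affineness of valuation polygons away from their break values.
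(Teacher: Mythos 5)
Your proof is correct and follows essentially the same route as the paper's: the lemma is obtained there, in point (5) of the discussion preceding it, by exactly the same two interval shrinkings --- first so that each $|f^{[i]}(\cdot)|$, $1\leq i<i_n$, becomes a single monomial in $|a|$ on an annulus (no zeros, hence $|f^{[i]}(a)|=|f^{[i]}(T)|_{|a|}$), then so that the finitely many crossing radii of the resulting monomials $\theta_{n,i}$ are avoided and the maximizing index is constant. You only supply the details the paper leaves implicit (local finiteness of break values, two distinct monomials crossing at most once), so this is the same argument.
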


  Keeping notation as above, we also have
  
  \begin{lemma}\label{lem: theta constant}
   Suppose there exists an interval $(r_1,r_2)\subset (0,1)$ such that $\theta_n$ is constant on $A(0;r_1,r_2)(k)$. Then, both functions $i_{n+1}$ and $\theta_n$ are constant on $D(0,r_2^-)$. 
  \end{lemma}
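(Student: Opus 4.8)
The plan is to realize $-\log\theta_n$ as the restriction to rational points of a superharmonic function on $D_1$ and to run a minimum-principle argument. Write $\rho_0\in(0,1)$ for the value of $\theta_n$ on $A(0;r_1,r_2)(k)$; note $\rho_0>0$ since we are under the standing hypothesis $i_n>1$. Recall from item (4) above that for every $a\in D_1(k)$
$$
-\log\theta_n(a)=\min_{1\le i<i_n}\frac{\log|f^{[i_n]}|-\log|f^{[i]}(a)|}{i_n-i},
$$
and from item (3) that $-\log\theta_n(a)$ is the $n$-th break of $v_a(f,\cdot)$, the slope of $v_a(f,\cdot)$ just to the left of it being $i_n$; also $f^{[1]}=f'$ is a unit on $D_1$ since $f$ is \'etale, and $|f^{[i_n]}|$ is a positive constant.

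First I would establish the bound $\theta_n(a)\le\rho_0$ for all $a\in D(0,r_2^-)(k)$. Put $\Theta(\rho):=\max_{1\le i<i_n}\bigl(|f^{[i]}(T)|_\rho/|f^{[i_n]}|\bigr)^{1/(i_n-i)}$ for $\rho\in(0,1)$. Being, up to constants, a maximum of monomials in $\rho$, the function $\Theta$ is continuous, nondecreasing, and convex in $\log\rho$; and since $|f^{[i]}(a)|\le|f^{[i]}(T)|_{|a|}$ one has $\theta_n(a)\le\Theta(|a|)$ for every $a$. For $\rho$ in $(r_1,r_2)$ outside a finite exceptional set, $|f^{[i]}(a)|=|f^{[i]}(T)|_\rho$ for all $i<i_n$ and all $a$ with $|a|=\rho$ (the valuation-polygon fact underlying item (5) above and Lemma \ref{lem: special intervals}), hence $\theta_n(a)=\Theta(\rho)$ there; combined with $\theta_n\equiv\rho_0$ on the annulus, this gives $\Theta=\rho_0$ off the exceptional set, and then on all of $(r_1,r_2)$ by continuity. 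Finally, a nondecreasing function that is convex in $\log\rho$ and constant on $(r_1,r_2)$ is constant on all of $(0,r_2)$, so $\Theta\equiv\rho_0$ there and $\theta_n(a)\le\Theta(|a|)=\rho_0$ on $D(0,r_2^-)(k)$.

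Next comes the potential-theoretic step. For each $i$ the function $-\log|f^{[i]}|$ is superharmonic on the Berkovich open disc $D_1$ (it is $-\log$ of an analytic function, and is harmonic for $i=1$ since $f'$ is a unit), so
$$
U:=\min_{1\le i<i_n}\frac{\log|f^{[i_n]}|-\log|f^{[i]}|}{i_n-i}
$$
is a real-valued, continuous, superharmonic function on $D_1$ whose restriction to rational points is $-\log\theta_n$. By the first step $U\ge-\log\rho_0$ on the connected curve $D(0,r_2^-)$, while $U=-\log\rho_0$ at each rational point of $A(0;r_1,r_2)$, \ie at interior points of $D(0,r_2^-)$; the minimum principle for superharmonic functions then forces $U\equiv-\log\rho_0$ on $D(0,r_2^-)$, \ie $\theta_n\equiv\rho_0$ on $D(0,r_2^-)(k)$. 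For the constancy of $i_{n+1}$, observe that for every $a\in D(0,r_2^-)(k)$ the supremum $\sup_{j\ge1}|f^{[j]}(a)|\rho_0^{\,j}$ is attained at $j=i_n$ — the slope of $v_a(f,\cdot)$ just left of its break at $\lambda=-\log\rho_0$ — and hence equals $|f^{[i_n]}|\rho_0^{\,i_n}=:M$, a constant independent of $a$; in particular $|f^{[i]}(a)|\le M\rho_0^{-i}$ for every $i$. Fix a rational point $a_0\in A(0;r_1,r_2)$ and set $j_0:=i_{n+1}(a_0)$, so that $|f^{[j_0]}(a_0)|=M\rho_0^{-j_0}$ while $|f^{[i]}(a_0)|<M\rho_0^{-i}$ for $1\le i<j_0$. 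Since a superharmonic function attaining its minimum at an interior point of a connected curve is constant, applying this to each $-\log|f^{[i]}|$ on $D(0,r_2^-)$ yields $|f^{[j_0]}(a)|=M\rho_0^{-j_0}$ and $|f^{[i]}(a)|<M\rho_0^{-i}$ for $1\le i<j_0$, for every $a\in D(0,r_2^-)(k)$. By the Newton-polygon description recalled in item (3), $j_0$ is therefore the least index $j$ with $|f^{[j]}(a)|\rho_0^{\,j}=M$, \ie $i_{n+1}(a)=j_0$, for every $a\in D(0,r_2^-)(k)$.

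The step I expect to be the main obstacle is the first one: the hypothesis controls $\theta_n$ only on $A(0;r_1,r_2)$, and one must propagate this inward across the radius $r_1$ with no a priori control near $0$ — in particular without knowing that $\rho_0$ itself lies in $(r_1,r_2)$, which would rule out naive ``move the base point'' arguments. Introducing the convex, monotone function $\Theta$ and using $\theta_n(a)\le\Theta(|a|)$ is what makes this propagation work; once the global bound $\theta_n\le\rho_0$ is in hand, recognizing $-\log\theta_n$ as superharmonic and invoking the minimum principle settles both assertions with little extra effort.
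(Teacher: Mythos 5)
Your proof is correct, but it takes a genuinely different route from the one in the paper. The paper's argument stays entirely inside the valuation-polygon toolkit: it first shrinks $(r_1,r_2)$ via Lemma~\ref{lem: special intervals} to a sub-annulus $A(0;r_1',r_2')$ on which a single index $j_0$ computes $\theta_n$, deduces from the constancy of $\theta_n$ there that $|f^{[j_0]}(T)|_{|a|}$ is constant and hence (by concavity and monotonicity of the valuation polygon of $f^{[j_0]}$) constant on all of $D(0,r_2'^-)$, then rules out a competing index $j_1$ at an interior point $a_1$ by comparing with a point $a_2$ of the sub-annulus where $|f^{[j_1]}(a_2)|\geq |f^{[j_1]}(a_1)|$, and finally lets $r_2'$ tend to $r_2$. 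You instead majorize $\theta_n(a)$ by the envelope $\Theta(|a|)$ built from sup-norms, propagate the constancy of $\Theta$ inward using convexity and monotonicity of $\log|f^{[i]}(T)|_\rho$ in $\log\rho$, and then upgrade the inequality $\theta_n\leq\rho_0$ to an equality --- and extract the constancy of $i_{n+1}$ --- from the minimum principle applied to $U$ and to the individual functions $-\log|f^{[i]}|$. The two arguments rest on the same underlying facts: every appeal you make to superharmonicity and the minimum principle reduces to the elementary statement that an analytic function $g$ on $D(0,r_2^-)$ whose absolute value attains its supremum at an interior point has constant absolute value, which follows from the concavity and monotonicity of $v(g,\cdot)$ exactly as used in the paper (and the minimum principle for the finite minimum $U$ reduces to the single-function case, since the term realizing the minimum value of $U$ at the extremal point is itself forced to be constant). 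What your version buys is a cleaner conceptual structure, a statement obtained directly on $D(0,r_2^-)$ without the final limiting step, and no need to re-invoke Lemma~\ref{lem: special intervals}; what it costs is the importation of potential-theoretic language that the paper does not otherwise develop, which you would either have to cite or unwind into the elementary form just described.
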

\begin{proof}
 We may assume that $i_n>1$. 
 
%  Let $a_0\in A(0;r_1,r_2)(k)$ and let $j_0=i(a_0)$, so that $\theta_n(a_0)=\theta_{n,j_0}(a_0)$. If for some $a_1\in A(0;|a_0|,r_2)(k)$ we have that $|f_{[j_0]}(a_1)|>|f_{[j_0]}(a_1)|$, then we would have $\theta_n(a_1)\geq \theta_{n,j_0}(a_1)>\theta_{n,j_)}(a_0)=\theta_n(a_0)$ which is a contradiction. Then it follows that for every $a\in A(0;|a_0|,r_2)(k)$, the absolute value $|f_{[j_0]}(a)|$ is constant hence 
 
 By the previous Lemma, there exists an interval $(r_1',r_2')\subset (r_1,r_2)$ and $j_0\in \{1,\dots,i_n-1\}$ such that for every $a\in A(0;r_1',r_2')(k)$, $i_{n+1}(a)=j_0$ and
 $$
   \theta_n(a)=\left(\frac{|f^{[j_0]}(T)|_{|a|}}{|f^{[i_n]}|}\right)^{\frac{1}{i_n-j_0}}.
 $$
 Since $\theta_n(a)$ is constant on $A(0;r_1',r_2')(k)$, then so is $|f^{[j_0]}(T)|_{|a|}$. Then, by the property (1) of valuation polygons, $|f^{[j_0]}(T)|_{|a|}$ is constant for all $a\in D(0,r_2'^{-})$ and then so is $\theta_{n,j_0}(a)$. Suppose that for some $a_1\in D(0;r_2'^-)(k)$ we have that $i_{n+1}(a_1)=j_1\neq j_0$, so that $\theta_{n,j_1}(a_1)>\theta_{n,j_0}(a_1)$ or $\theta_{n,j_1}(a_1)=\theta_{n,j_0}(a_1)$ and $j_1<j_0$. Again by the property (1) of valuation polygons there exists an $a_2\in A(0;r_1',r_2')(k)$ such that $|f^{[j_1]}(a_2)|\geq |f^{[j_1]}(a_1)|$. Then, by \eqref{theta n i} 
 \begin{align*}
 \theta_{n,j_1}(a_2)&\geq \theta_{n,j_1}(a_1)>\theta_{n,j_0}(a_1)=\theta_{n,j_0}(a_2)=\theta_n(a_2),\quad \text{or}\\
 \theta_{n,j_1}(a_2)&\geq \theta_{n,j_1}(a_1)=\theta_{n,j_0}(a_1)=\theta_{n,j_0}(a_2)=\theta_n(a_2) \quad \text{ and $j_1<j_0$}
 \end{align*}
 which is a contradiction in both cases. Hence, $i_{n+1}(a)=j_0$ and $\theta_n$ is constant on all of $D(0,r_2'^-)$. Finally we note that we could have chosen the interval $(r_1',r_2')$, so that $r_2'$ is arbitrarily close to $r_2$, again by Lemma \ref{lem: special intervals}, which ends the proof. 
 %  If we consider now interval $(r_2',r_2)$, by Lemma \ref{lem: special intervals} there exists a subinterval $(r_1'',r_2'')\subset (r_2',r_2)$  and an $j_0'\in \{1,\dots,i_n\}$ such that for every $a\in A(0;r_1'',r_2'')(k)$, we have $i(a)=j_0'$ and 
%  $$
%  \theta_n(a)=\left(\frac{|f_{[j_0']}(T)|_{|a|}}{|f_{[i_n]}|}\right)^{\frac{1}{i_n-j_0'}}.
%  $$
%   
% By what we have just proved, we obtain that for every $a\in D(0;r_2''^-)$, $i(a)=j_0'$ and $\theta_n(a)=\theta_{n,j_0'}(a)$ is constant. In particular, $j_0'=j_0$ and we are done.
\end{proof}

%   
%   
%   
%   \item Let $I=I_{\eps_2}$ constructed in point (5) above. We now study what happens if for $a\in D(1^-)(k)$, $|a|\in I$, $\theta_n(a)$ is constant. Let $j_0:=i(a_0)$, for some $a_0\in D(1^-)(k)$, $|a_0|\in I$. Then, since $|f_{[j_0]}(a)|=|f_{j_0}|_{|a|}$, for every $a\in D(1^-)(k)$, $|a|\in I$, we conclude that $|f_{j_0}(a)|=|f_{j_0}|$ is constant for every $a\in D((1-\eps_1)^-)(k)$. Moreover, there exists an $\alpha>0$ such that 
%   \begin{equation}\label{eq: 1}
%   \forall \rho\in (\theta_n(a_0)-\alpha,\theta_n(a_0)),\quad \forall a, |a|\leq 1-\eps_1, \quad \forall i\in\{1,\dots,i_n-1\}, i\neq j_0,\quad |f_{j_0}|\rho^{j_0}> |f_{[i]}(a)|\rho^i.
%   \end{equation}
%   We next show that also $\theta_{n}(a)$ is constant in the range $|a|\leq 1-\eps_2$. If not, then there exists an $a_1\in D(1^-)(k)$, $|a_1|<1-\eps_1-\eps_2$ with $\theta_n(a_1)>\theta_n(a_0)$ (because $|f_{[j_0]}(T)|_{|a|}$ and $|f_{[i_n]}|_{|a|}$ are constant). In this case $j_1:=i(a_1)\neq j_0$. Then. there exists $\rho>\theta_n(a_0)$ such that 
%   \begin{equation}\label{eq: 2}
%   |f_{[j_1]}(a_1)|\rho^{j_1}> |f_{[i_n]}|\rho^{i_n}\quad \text{and}\quad |f_{[j_1]}(a_1)|\rho^{j_1}> |f_{[j_0]}|\rho^{j_0}.
%   \end{equation}
%   But then the last equation has to hold also for $a_0$ instead of $a_1$, which contradicts \eqref{eq: 1}.
%   
%   In conclusion, for every $a\in D(1^-)(k)$, $|a|\leq 1-\eps_1$, $i(a)=j_1$ and restriction of the morphism $f$ to the disc $D((1-\eps_1)^-)$ is $(n+1)$-radial.
%  \end{enumerate}
% 

\begin{cor}\label{cor: theta const}
 Let $f:D_1\to D_2$ be a weakly $n$-radial morphism of open unit discs.
\begin{enumerate}
\item If $\theta_n(a)=0$, for every $a\in D_1(k)$, then $f$ is radial. 
\item Otherwise, $f$ is weakly $(n+1)$-radial if and only if there exists an $\eps>0$ such that the restriction of $\theta_n$ on $A(0;1-\eps,1)(k)$ is constant.
\end{enumerate}
 \end{cor}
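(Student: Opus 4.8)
The plan is to deduce Corollary~\ref{cor: theta const} from Lemma~\ref{lem: theta constant} together with the characterization of weak $(n+1)$-radiality via the first $n+1$ slopes of the valuation polygon. For part~(1), if $\theta_n(a)=0$ for every $a\in D_1(k)$, then by property~(3) of Section~\ref{rmk: important} this forces $i_n=1$; since $f$ is \'etale the smallest slope of $v_a(f,\cdot)$ is always $1$, so the valuation polygon has exactly the $n$ slopes $i_1>\dots>i_n=1$ with no further breaks, independently of $a$. Hence $v_a(f,\cdot)$ does not depend on $a$, which is precisely the statement that $f$ is radial (Definition~\ref{defn: radial discs}).

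For part~(2), assume $\theta_n(a)\neq 0$ for some $a$, so that $i_n>1$ and, by Lemma~\ref{lem: theta constant}'s setup, $\theta_n(a)>0$ for all $a\in D_1(k)$ (property~(4)). First I would prove the ``only if'' direction: if $f$ is weakly $(n+1)$-radial, then by Definition~\ref{def: weakly n} the first $n+1$ slopes of $v_a(f,\cdot)$, hence in particular the $(n+1)$-st break value $-\log\theta_n(a)$ and the slope $i_{n+1}(a)$, do not depend on $a$; so $\theta_n$ is constant on all of $D_1(k)$, a fortiori on $A(0;1-\eps,1)(k)$ for any $\eps>0$. Conversely, suppose $\theta_n$ is constant on $A(0;1-\eps,1)(k)$ for some $\eps>0$. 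Apply Lemma~\ref{lem: theta constant} with $(r_1,r_2)=(1-\eps,1)$: we conclude that both $i_{n+1}$ and $\theta_n$ are constant on $D(0,1^-)=D_1$. Constancy of $i_{n+1}$ means the $(n+1)$-st slope of $v_a(f,\cdot)$ is independent of $a$; together with the $n$-radiality of $f$ (which gives the first $n$ slopes independent of $a$), this yields that the first $n+1$ slopes do not depend on $a$, i.e. $f$ is weakly $(n+1)$-radial by Definition~\ref{def: weakly n}.

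The only genuine subtlety is matching the phrasings: Lemma~\ref{lem: theta constant} is stated with ``$\theta_n$ constant on $A(0;r_1,r_2)(k)$'' and concludes constancy on $D(0,r_2^-)$, so taking $r_2\to 1$ gives constancy on the whole disc; one should note that a single fixed $\eps$ already suffices since $D(0,(1-\eps/2)^-)$ together with $A(0;1-\eps,1)$ covers $D_1$ after applying the lemma. The main (very mild) obstacle is just being careful that in part~(2) the hypothesis $\theta_n(a)>0$ for \emph{all} $a$—not merely for one $a$—is what is needed to invoke property~(4) and Lemma~\ref{lem: theta constant}; this follows because $i_n>1$ is a global condition on the common dominating terms $\Dc_{f,n}$, so once $\theta_n$ is nonzero somewhere it is nonzero everywhere. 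No further computation is required.
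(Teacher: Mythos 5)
Your overall strategy matches the paper's: part (1) is read off from the discussion in \S\ref{rmk: important}, and part (2) is reduced to Lemma \ref{lem: theta constant}. However, there is a genuine gap in your ``if'' direction of (2). Definition \ref{def: weakly n} of weak $(n+1)$-radiality has \emph{two} conditions: (i) $f$ is $n$-radial, and (ii) the first $n+1$ slopes of $v_a(f,\cdot)$ are independent of $a$. You verify only (ii). For (i) you appeal to ``the $n$-radiality of $f$'', but the hypothesis of the corollary is only that $f$ is \emph{weakly} $n$-radial, which is strictly weaker (Remark \ref{rmk: only weakly} exhibits a weakly $1$-radial morphism that is not $1$-radial). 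Since the entire content of part (2) is precisely this upgrade, condition (i) cannot be taken for granted. The paper closes this step by observing that, once Lemma \ref{lem: theta constant} yields that $\theta_n$ is constant on all of $D_1(k)$, the $n$-th break of every $v_a(f,\cdot)$ sits at the same place $-\log\theta_n$, so the polygons, which already agree up to the $(n-1)$-st break by weak $n$-radiality, agree on $(0,-\log\theta_n)$ with exactly $n$ slopes there; that is, $f$ is $n$-radial with border of $n$-radiality equal to the common value of $\theta_n$. You need to add this argument before invoking Definition \ref{def: weakly n}.

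A secondary issue concerns your ``only if'' direction: you deduce that the break value $-\log\theta_n(a)$ is independent of $a$ from the constancy of the first $n+1$ \emph{slopes}. That implication is false in general: constancy of the slopes of a concave piecewise-affine function says nothing about the location of its breaks (again Remark \ref{rmk: only weakly}: there the slopes $1$, $p$, $2p$ do not depend on $a$, while both break points move with $|a|$ --- this is exactly the difference between weak $(n+1)$-radiality and $(n+1)$-radiality). The constancy of $\theta_n$ must instead be extracted from condition (i) of Definition \ref{def: weakly n}, i.e. from the $n$-radiality of $f$, which pins down the whole polygon (values and breaks, not merely its slopes) on the interval $(0,-\log\b_{f,n})$; this is how the paper argues. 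Finally, your closing remark about covering $D_1$ by $D(0,(1-\eps/2)^-)$ and the annulus is unnecessary: Lemma \ref{lem: theta constant} applied with $(r_1,r_2)=(1-\eps,1)$ already gives constancy on $D(0,1^-)=D_1$.
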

 \begin{proof}
  The first point is clear. For the second point, if $f$ is weakly $(n+1)$-radial, then it is $n$-radial hence $\theta_n$ is constant on the whole disc $D_1(k)$. In the other direction, from Lemma \ref{lem: theta constant} it follows that $\theta_n(a)$ is constant for every $a\in D_1(k)$ which means that $f$ is $n$-radial, with border of radiality equal to $\theta_n(a)$ (for any $a\in D_1(k)$). The same lemma implies the constancy of the function $i_{n+1}(a)$, which is the second condition in Definition \ref{def: weakly n}.
  \end{proof}

 \subsection{A criterion for radiality}  
 
%  From the definition of radiality it is not a priori clear whether there exists finitely many pairs $(T_i,S_i)$ of compatible coordinates on which one can "test" whether the morphism is radial or not. However, this is the case as the following theorem suggests. 
%  \begin{thm}\label{thm: criterion finite}
%  Let $f:D_1\to D_2$ be a finite \'etale morphism of open unit discs, let $(T,S)$ be a pair of compatible coordinates for $f$ and $S=\sum_{i\geq 1}a_i\, T^i$ its $(T,S)$-coordinate representation. Let $a_1,\dots,a_n$ be all the zeroes in $D_1$ of the function $\sum_{i\geq 1}a_i\, T^i$. Then, $f$ is radial if and only if the $(T_{a_i},S)$ and $(T_{a_j},S)$-valuation polygons of $f$ coincide for every $i,j=1,\dots,n$.
% \end{thm}
% \begin{proof}
%  We start by noticing that for each $i=1,\dots,n$, the pair of coordinates $(T_{a_i},S)$ is compatible for $f$. It follows then from definition of radiality that if $f$ is radial then the condition in the theorem is fulfilled. 
%  
%  Suppose now that for all $i,j=1,\dots,n$, the $(T_{a_i},S)$ and $(T_{a_j},S)$-valuation polygons of $f$ coincide, and let $a\in D_1(k)$, and suppose that $a_i$'s are arranged in such a way to have
%  $$
%  0=|a_0|<|a_1|=\dots=|a_{i_1}|<|a_{i_1+1}|=\dots=|a_{i_2}|<\dots<|a_{i_{m}+1}|=\dots=|a_n|.
%  $$
%  We note that in the  
% \end{proof}

\begin{thm}\label{thm: criterion target}
 Let $f:D_1\to D_2$ be a finite \'etale morphism of open unit discs of degree $d$. Then, $f$ is radial if and only if the following holds: 
 There exists a function $N:[0,1)\to \N$ such that for every rational point $y\in D_2(k)$ we have $\#f^{-1}(y_\rho)=N(\rho)$. 
 
Moreover, the profile $\pf_f$ is uniquely determined by the function $N$.
\end{thm}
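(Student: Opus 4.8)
The plan is to prove the two implications separately; the forward one (``$f$ radial $\Rightarrow$ the fiber cardinality depends only on the radius'') is routine, and the converse is where the work lies.

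For the forward implication, suppose $f$ is radial with profile $\pf_f$. Since $f$ is \'etale, every slope of its valuation polygon is $\ge 1$, so $\pf_f\colon[0,1)\to[0,1)$ is a strictly increasing bijection. Fix $y\in D_2(k)$ and $\rho\in(0,1)$, and put $r:=\pf_f^{-1}(\rho)$. Using Lemma~\ref{lem: basic discs}(2)--(3) together with the standard fact that a finite morphism of closed discs maps the maximal point to the maximal point and has no other point over it, one sees that $f^{-1}(\zeta_{y,\rho})$ consists exactly of the maximal points of the connected components of $f^{-1}(D(y,\rho))$, all of relative radius $r$; by Remark~\ref{rmk: radiality and multiplicity} each of these has the same geometric ramification index $\nu=\partial^-v(f,-\log r)$, so Corollary~\ref{cor: num preimages} gives $\#f^{-1}(\zeta_{y,\rho})=d/\nu$, which depends only on $\rho$. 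Thus $N(\rho):=d/\partial^-v(f,-\log\pf_f^{-1}(\rho))$, together with $N(0):=d$, does the job. The ``moreover'' clause follows because $N$ determines, at each $\lambda>0$, the left slope $\partial^-v(f,\lambda)=d/N(e^{-v(f,\lambda)})$ of the valuation polygon; since $v(f,0)=0$ this recovers $v(f,\cdot)$, hence $\pf_f$, by integration (at each height the current slope is read off from $N$).

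For the converse, assume such an $N$ exists but $f$ is not radial, and aim for a contradiction. First, for every $y$ the function $\rho\mapsto\#f^{-1}(\zeta_{y,\rho})$ equals the number of connected components of the closed-disc preimage $f^{-1}(D(y,\rho))$, and it is non-increasing: as $\rho$ grows the target disc grows, and every component of the larger preimage surjects onto a disc containing $y$, hence meets, and so contains, a component of the smaller preimage. Therefore $N\colon[0,1)\to\{1,\dots,d\}$ is a bounded non-increasing integer function, so it has at most $d-1$ discontinuities; we will exhibit infinitely many. By Lemma~\ref{lem: weakly 1}, $f$ is weakly $1$-radial. Iterating Corollary~\ref{cor: theta const} (alternative (1) never occurs, as $f$ is not radial, so at each stage the current $\theta_m\not\equiv 0$, i.e. $i_m>1$) and using that the slopes of $v_a(f,\cdot)$ are positive integers bounded by $d=\deg f$, the process must terminate: there is $n\ge 1$ with $f$ weakly $n$-radial, $i_n>1$, and $\theta_n$ not constant on $A(0;1-\eps,1)(k)$ for any $\eps>0$. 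By Lemmas~\ref{lem: special intervals} and~\ref{lem: theta constant}, after shrinking we obtain an annulus $A(0;r_1,r_2)(k)$ arbitrarily close to the boundary of $D_1$, an index $i$ with $1\le i<i_n$, and constants $C>0$, $\alpha\neq 0$, such that $i_{n+1}(a)=i$ and $\theta_n(a)=C|a|^{\alpha}$ for all $a\in A(0;r_1,r_2)(k)$; the exponent $\alpha$ is nonzero, for otherwise $\theta_n$ would be constant on the annulus and then, by Lemma~\ref{lem: theta constant}, on all of $D_1$, against the choice of $n$.

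Now fix $a\in A(0;r_1,r_2)(k)$ and set $E^*:=D(a,\theta_n(a))$, $y:=f(a)$, $\sigma_a:=\pf_{a,f}(\theta_n(a))$, so that $f(E^*)=D(y,\sigma_a)$ and, by Lemma~\ref{lem: basic discs}(1), $f|_{E^*}$ has degree $\partial^-v_a(f,-\log\theta_n(a))=i_n$. For $\rho\le\sigma_a$ every component of $f^{-1}(D(y,\rho))$ is either contained in $E^*$ or disjoint from it; let $c(\rho)$ and $o(\rho)$ count the two kinds, so $N(\rho)=c(\rho)+o(\rho)$ with both $c$ and $o$ non-increasing. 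One has $c(\sigma_a)=1$, while for $\rho<\sigma_a$ the component of $(f|_{E^*})^{-1}(D(y,\rho))$ through $a$ is $D(a,\pf_{a,f}^{-1}(\rho))$, of degree $i_{n+1}(a)=i<i_n$, so the remaining components carry total degree $i_n-i>0$ and $c(\rho)\ge 2$. Hence $\lim_{\rho\uparrow\sigma_a}N(\rho)=\lim_{\rho\uparrow\sigma_a}c(\rho)+\lim_{\rho\uparrow\sigma_a}o(\rho)\ge 2+o(\sigma_a)=N(\sigma_a)+1$, so $N$ jumps at $\sigma_a$. As $a$ ranges over $A(0;r_1,r_2)(k)$, the quantity $\theta_n(a)=C|a|^{\alpha}$ takes infinitely many values (the value group $|k^\times|$ is dense in $\R_{>0}$, $k$ being algebraically closed), and $\sigma_a=\pf_{a,f}(\theta_n(a))$ is a strictly increasing function of $\theta_n(a)$; thus $N$ has infinitely many discontinuities, a contradiction, and $f$ must be radial. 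The crux of the argument is exactly this last step: turning the failure of radiality, as detected by the exact boundary functions $\theta_n$, into an infinite family of genuine jumps of the fiber-cardinality function on the target, the mechanism being that a closed subdisc mapping with degree $i_n>1$ must split into at least two discs once one descends below its critical image radius $\sigma_a$.
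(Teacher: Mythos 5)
Your proof is correct, and the forward implication together with the ``moreover'' clause follows the paper's argument essentially verbatim (the paper reads off the breakpoints of $v(f,\cdot)$ from the jumps of $N$; your ODE-style uniqueness statement $\partial^-v(f,\lambda)=d/N(e^{-v(f,\lambda)})$, $v(f,0)=0$, is an equivalent packaging). For the converse the two proofs share the same skeleton --- pass to the maximal $n$ for which $f$ is weakly $n$-radial, then use Lemmas~\ref{lem: special intervals} and~\ref{lem: theta constant} and Corollary~\ref{cor: theta const} to produce an annulus near the boundary on which $\theta_n(a)=C|a|^{\alpha}$ with $\alpha\neq 0$ --- but the endgame differs. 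The paper fixes $a,b$ in the annulus with $\theta_n(a)<\theta_n(b)$, chooses one radius $r$ with $\pf_{a,f}(\theta_n(a))<r<\pf_{b,f}(\theta_n(b))$, and computes the two fiber cardinalities over $\zeta_{f(a),r}$ and $\zeta_{f(b),r}$ directly ($d/i_n$ versus something strictly larger), which requires normalizing the annulus so that $|f(a)|=|a|^d$ in order to control all $d$ preimages of $f(a)$ simultaneously, plus Lemma~\ref{lem: mult relation} and Corollary~\ref{cor: num preimages}. You instead work entirely inside the single closed disc $E^*=D(a,\theta_n(a))$, note that the degree-$i_n$ map $f|_{E^*}$ must break into at least two components once the target radius drops below $\sigma_a=\pf_{a,f}(\theta_n(a))$, and conclude that the universal function $N$ jumps at every $\sigma_a$; since the $\sigma_a$ form an infinite set (your monotonicity claim for $a\mapsto\sigma_a$ does need the observation, also used in the paper, that weak $n$-radiality makes $v_a$ and $v_b$ coincide up to the smaller of the two $n$-th breaks) while a non-increasing function with values in $\{1,\dots,d\}$ has at most $d-1$ jumps, you get the contradiction. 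Your route buys a purely local argument, with no normalization of $|f(a)|$ and no bookkeeping of the full fiber of $f(a)$, at the price of invoking the a priori finiteness of the jumps of $N$, which the paper records after Remark~\ref{rmk:invdesc}. One small point to tighten: constancy of $\theta_n$ on a single annulus $A(0;r_1,r_2)$ only yields, via Lemma~\ref{lem: theta constant}, constancy on $D(0,r_2^-)$, so to rule out $\alpha=0$ you should run the argument over a sequence of annuli with outer radii tending to $1$ and match the constants on overlaps --- the same implicit step appears in the paper's own proof, and the fix is cosmetic.
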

% \begin{rmk}
% We will use the notation $\zeta_{a,\rho}$ is the point "centered" at a rational point $a$ and of radius $\rho$. We will denote by $r(\cdot)$ the radius function.
% \end{rmk}
\begin{proof}
Let $(T,S)$ be a pair of compatible coordinates for $f$.

 Suppose that $f$ is a radial morphism of degree $d$, and let $\pf_f$ be its profile. Let $x_1,\dots,x_d$ be all the preimages of the point $y$. Then, all the preimages of the point $x_\rho$ are of the form $\zeta_{x_i,\rho_i}$, $i=1,\dots,d$ (some of which may coincide). Since $\pf_f(\rho_i)=\rho$, and $\pf_f$ is bijective the numbers $\rho_1,\dots,\rho_d$ are all equal and  $\pf_f^{-1}(\rho) = \{\rho_1=\dots=\rho_d\}$. 
 
 The multiplicity of each point $\eta_{x_i,\rho_i}$ is then equal to 
 $$
 \nu_f(\eta_{x_i,\rho_i})=\partial^-v_{x_i}(f,\pf_f^{-1}(\rho)),\quad i=1,\dots,d,
 $$
 and, since the right hand side only depends on $\rho$ and not on $x_i$ due to radiality of $f$, we also have $\nu_f(\zeta_{x_1,\rho_1})=\dots=\nu_f(\zeta_{x_d,\rho_d})$. Corollary \ref{cor: num preimages} then implies that $\#f^{-1}(y_\rho)=\frac{d}{\nu_f(\eta_{x_i,\rho_1})}$. Clearly, this number only depends on $\rho$ and not on $y$: this  
 is then our function $N(\rho)$. This proves the ``only if'' part of the statement. 
\par In the other direction, suppose we are given a function $N$ satisfying the conditions of the theorem. If $f$ is radial, we are done, so suppose that $f$ is not radial. By Lemma \ref{lem: weakly 1}, $f$ is weakly 1-radial. 
%   We prove that it is also $1$-radial. For this, let $\zeta_{a,\rho'}\in D_1(k)$ be any point with multiplicity $\nu_f(\zeta_{a,\rho'})=d$ and let $\zeta_{f(a),\rho}=f(\zeta_{a,\rho})$. Then, $\#f^{-1}(\zeta_{f(a),\rho})=1$ due to Lemma \ref{lem: mult relation}. By your definition of multiplicity it follows that $\rho=\rho'^d$, and moreover since for for every point $b\in D_(k)$ and any $r\geq \rho$, $\#f^{-1}(\zeta_{b,r})=1$, it follows that $f^{-1}(\zeta_{b,r})=\zeta_{c,r^{\frac{1}{d}}}$, where $f(c)=b$. Since $b$ was arbitrarily, the conclusion follows. 
 \par Let $n$ be the maximal number such that $f$ is weakly $n$-radial, but not weakly $(n+1)$-radial. 
Then in particular $i_n>1$, where $i_n$ is the minimal element in $\Dc_{f,n}$. Let $(r_m,r_m')$ be a sequence of subintervals of $(0,1)$, satisfying the following properties (see Lemma \ref{lem: special intervals}):
 \begin{enumerate}
  \item $(r_{m+1},r_{m+1}')\subset (r_m',1)$;
  \item $\lim_{m\to \infty} r_m=1$ and
  \item there exists $j_m\in \{1,\dots,i_n-1\}$ such that for every $a\in A(0;r_m,r_m')(k)$
  $$
  \theta_n(a)=\left(\frac{|f^{[j_m]}(T)|_{|a|}}{|f^{[i_n]}|}\right)^{\frac{1}{i_n-j_m}}.
  $$ 
 \end{enumerate} The formula shows that, for sufficiently big $m$ and for $a \in A(0;r_m,r_m')(k)$,  $\theta_n(a)$  
 only depends on $|a|$ and increases with $|a|$. 
On the other hand, for sufficiently big $m$, $\theta_n$ cannot be constant on the annulus $A(0;r_m,r_m')(k)$ because otherwise it would be constant on all of the disc $D_1(k)$, due to Lemma \ref{lem: theta constant} and, by Corollary~\ref{cor: theta const}, $f$ would be $(n+1)$-radial, against the assumption. Hence, there exists $m_0$ such that $\theta_n$ is not constant on $A_{m_0}:=A(0;r_{m_0},r_{m_0}')(k)$. Moreover, we can choose $m_0$ arbitrarily large so that in particular for every $a\in A_{m_0}$, $|f(a)|=|a|^d$. This latter condition is equivalent to $f(A[0;|a|,|a|])=A[0;|a|^d,|a|^d]$ and $f^{-1}(A[0;|a|^d,|a|^d])=A[0;|a|,|a|]$. 

Let $r\in(0,1)$ be such that there exist $a,b\in A_{m_0}$, $|a|<|b|$ and 
\begin{equation}\label{eq: r}
\pf_{(T_a,S_{f(a)}),f}(\theta_n(a))<r<\pf_{(T_b,S_{f(b)}),f}(\theta_n(b)),
\end{equation}
or, in the other words, let us choose $r$ such that
$$
v_b(f,-\log(\theta_n(b)))<-\log r<v_a(f,-\log(\theta_n(a))),
$$
as is shown in the Figure \ref{fig: finding r}.
\begin{figure}[ht] 
\centering
\begin{picture}(330,220)(-50,0)     % dimension de la boite  300pt de largeur, 170pt de hauteur, translatÃ¯Â¿Â½e de -50 points dans l'axe horizontal
\put(0,10){\vector(1,0){300}}       %vecteur horizontal commencant en (0,25)  de longueur 200
\put(20,0){\vector(0,1){230}}         %vecteur vertical commencant en (60,0)  de longueur 120
\thicklines              % a partir de maintenant les lignes seront plus Ã¯Â¿Â½paises
%\put(0,180){\makebox(0,0)[b]{$v_a(f,\lambda)$}}
\put(20,10){\line(1,2){41}}

\put(60,90){\makebox(0,0){$\ast$}}
\put(60,90){\line(1,1){41}}

\put(100,130){\makebox(0,0){$\ast$}}
\put(140,145){\makebox(0,0){$\ast$}}
\put(140,145){\line(2,1){100}}
\put(240,195){\makebox(0,0){$\ast$}}
\put(200,173){\makebox(0,0)[b]{}}
\put(190,170){\makebox(0,0){$\ast$}}
\put(288,200){\makebox(0,0){$v_a(f,\lambda)$}}
\put(288,175){\makebox(0,0){$v_b(f,\lambda)$}}
%\put(230,177){\line(1,0){50}}
\put(190,170){\line(6,1){40}}
\put(180,-5){\makebox(0,0)[b]{$-\log(\theta_n(b))$}}
\put(250,-5){\makebox(0,0)[b]{$-\log(\theta_n(a))$}}
\put(15,45){\makebox(0,0)[b]{slope $i_1$}}
\put(55,105){\makebox(0,0)[b]{slope $i_2$}}
\put(146,158){\makebox(0,0)[b]{slope $i_n$}}
%\put(180,167){\makebox(0,0)[b]{slope $i_{n+1}(a)$}}
\put(20,182){\makebox(0,0){$\ast$}}
\put(-5,182){\makebox(0,0){$-\log(r)$}}

\put(260,175){\makebox(0,0){$\dots$}}
\put(260,195){\makebox(0,0){$\dots$}}

\put(122,135){\makebox(0,0){$\dots$}}
\put(180,0){\makebox(0,0)[b]{}}

\put(294,-3){\makebox(0,0)[b]{$\lambda$}}
\put(190,10){\dashbox{2}(0,161){}}
\put(141,10){\dashbox{2}(0,135){}}
\put(60,10){\dashbox{2}(0,80){}}
\put(100,10){\dashbox{2}(0,120){}}
\put(240,10){\dashbox{2}(0,185){}}
\put(20,182){\dashbox{2}(185,0){}}
\put(20,195){\dashbox{2}(220,0){}}
\put(20,170){\dashbox{2}(170,0){}}

\end{picture}
\caption{The valuation polygons $v_a(f,\cdot)$ and $v_b(f,\cdot)$.}\label{fig: finding r}
\end{figure}
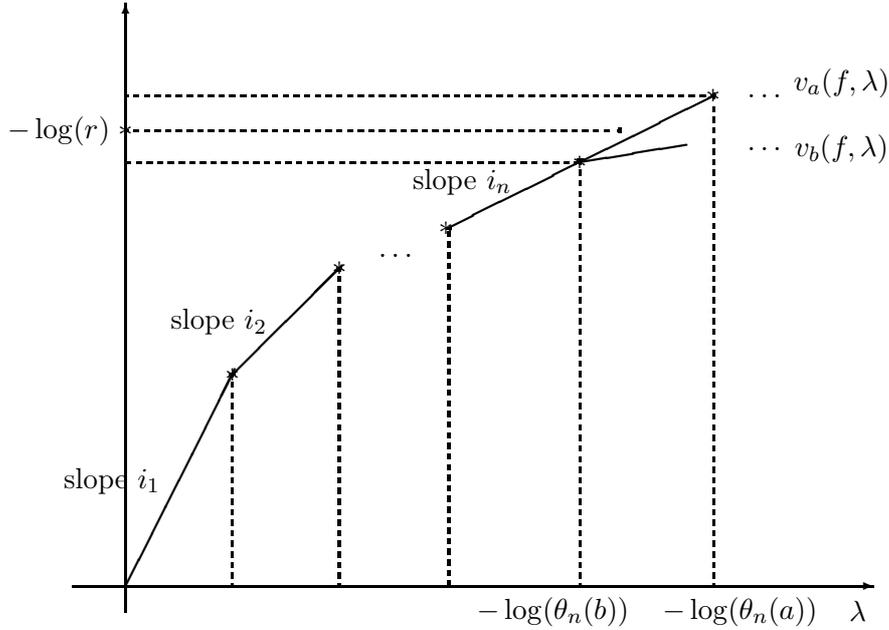
We note that we can always find such an $r$ because $\theta_n$ is not constant on $A_{m_0}$ and increases with the absolute value of the argument, and furthermore, $f$ being weakly $n$-radial and our choice of $A_{m_0}$ imply that the $(T_a,S_{f(a)})$ and $(T_b,S_{f(b)})$-profiles of $f$ coincide on the segment $[\theta_n(b),1]$ (that is, $v_a(f,\cdot)$ and $v_b(f,\cdot)$ coincide on the segment $(0,-\log(\theta_n(b))]$). Let $y_1:=\zeta_{f(a),r}$ and $y_2:=\zeta_{f(b),r}$. We note that $y_1$ and $y_2$ have the same radius, so they have the same number $N(r)$ of preimages, counted without multiplicities. 

We next study the preimages of the points $y_1$ and $y_2$. Let $a=a_1,\dots,a_d$ and $b=b_1,\dots,b_d$ be all the preimages of the points $f(a)$ and $f(b)$, respectively. Our choice of $A_{m_0}$ implies that for $i=1,\dots,d$, all the points $a_i$ have the same norm as $a$, while all the points $b_i$ have the same norm as $b$. Lemma \ref{lem: basic discs} (3)   implies that for each $i=1,\dots,d$, there is exactly one preimage of the point $y_1$ (resp. $y_2$) on the canonical path $l_{a_i}$ (resp. $l_{b_i}$), which we denote by $\zeta_{a_i,r_i}$ (resp. $\zeta_{b_i,s_i}$). Clearly, we have 
$$
\pf_{a_i,f}(r_i)=r\quad\text{ and }\quad \pf_{b_i,f}(s_i)=r. 
$$
Next, since $f$ is weakly $n$-radial and because of our choice of $A_{m_0}$, the values   $\theta_n(a_i)$, for $i=1,\dots,n$, all coincide.
% the properties of the function $\theta_n$ on $A_{m_0}$, 
In particular,  all the profile functions $\pf_{a_i,f}$, $i=1,\dots,n$, coincide on the segment $[\theta_n(a),1)$. Remark \ref{rmk: profile valuation} then implies, because of the first inequality in \eqref{eq: r}, that $r_1=\dots=r_n$. For the same reason, all the points $\zeta_{a_i,r_i}$ have the same geometric ramification index which is precisely $i_n$. Corollary \ref{cor: num preimages} then gives 
\begin{equation}\label{eq: ineq 1}
N(r)=\#f^{-1}(y_1)= \frac{d}{i_n}. 
\end{equation}
Similarly, the $(T_{b_i},S_{f(b_i)})$-profiles of $f$ coincide on the segment $[\theta_n(b),1]$. The second inequality in \eqref{eq: r} implies that $s_i<\theta_n(b)$, and consequently for each $i=1,\dots,n$, $\nu_f(\zeta_{b_i,s_i})\leq i_{n+1}(b_i)<i_n$. Let $\nu_2$ be the  maximal number among the $\nu_f(\zeta_{b_i,s_i})$, $i=1,\dots,n$. Lemma \ref{lem: mult relation} implies 
\begin{equation}\label{eq: ineq 2}
N(r)=\#f^{-1}(y_2)\geq \frac{d}{\nu_2}>\frac{d}{i_n}.
\end{equation}
Inequalities \eqref{eq: ineq 1} and \eqref{eq: ineq 2} give us a contradiction, hence $f$ is radial.
\par As for the last assertion of the theorem, we notice that to determine the profile $\pf_f$ amounts to determining the valuation polygon $v(f,\cdot)$ or, equivalently,  to  finding   breakpoints and corresponding slopes in between of the latter polygon (since we already know the behavior of the function $\lambda \mapsto v(f,\lambda)$ for $\lambda$ close to 0).
\par
If $0<b_1<\dots<b_n<1$ are the points of discontinuity of $N$ on the path $l_0$ from $0$ to the exit of $D$,    it is easy to see that $-\log b_n<\dots<-\log b_1$ are the breakpoints of $v(f,\cdot)$. Moreover, if $\rho\in (b_i,b_{i+1})$ for $i=1,..,n-1$ (resp. $\rho\in (0,b_1)$), then Remark \ref{rmk: radiality and multiplicity} and Corollary \ref{cor: num preimages} imply that the 
$\partial^-v(f,-\log \rho)=\frac{d}{N(\rho)}$.
 \end{proof}

\begin{defn}
 Let $f:D_1\to D_2$ be a finite morphism of open unit discs and let $a\in D_2(k)$. We define the function $N_a:=N_{f,a  }:[0,1)\to \N$ by 
 $$
 N_a(\rho):=\#f^{-1}(\zeta_{a,\rho}).
 $$
\end{defn}
\begin{rmk}\label{rmk:invdesc} Let $d$ be the degree of the finite morphism $f$.
Let $a_1,\dots,a_n$, with $n \leq d$, be the distinct inverse images of $a$. The set of connected components of 
$f^{-1}(D(a,\rho^-))$ consists of  discs $D(a_i,\rho_i^-)$, for $i=1,\dots,n$, not necessarily distinct. Then the inverse images of $\zeta_{a,\rho}$ are  
among the points  $\zeta_{a_i,\rho_i}$, for $i=1,\dots,n$. 
We deduce from this that $N_a(\rho)$ coincides with (using the previous notation)
\ben
\item 
the number $N_1$ of distinct points  $\zeta_{a_i,\rho_i}$, for $i=1,\dots,n$; 
%such that $f(\zeta_{a_i,\rho_i}) = \zeta_{a,\rho}$;
\item
the maximum number $N_2$ of connected components of the inverse image of a connected  affinoid domain in $D_2$ with good reduction and with maximal point $\zeta_{a,\rho}$;
\item
the maximum number $N_3$ of connected components of the inverse image of a connected  affinoid domain in $D_2$ with good reduction and with maximal point $\zeta_{a,\rho}$, containing $a$. 
\een
The equalities $N_1 = N_2 = N_a(\rho)$ and $N_3 \leq N_2$ are clear. We only need to show that $N_2 \leq N_3$. 
\par
In fact, let $A$ be a connected  affinoid domain in $D_2$ with good reduction and with maximal point $\zeta_{a,\rho}$
such that $A_1,\dots,A_{N_2} \subset D_1$
are  the distinct connected components of the inverse image of $A$ in $D_1$. Then any $A_j$ has good reduction and as maximal point one of the points $\zeta_{a_i,\rho_i}$. Conversely,  any point $\zeta_{a_i,\rho_i}$ 
belongs to exactly one of  the affinoids $A_1,\dots,A_{N_2}$. We may then re-index the affinoids $A_j$ and the points 
$\zeta_{a_i,\rho_i}$ in such a way that, for $j=1,\dots,N_2$, $\zeta_{a_j,\rho_j}$ is the maximal points of $A_j$. 
For $j=1,\dots,N_2$, let $\ol{A}_j := A_j \cup D(a_j,\rho_j^-)$. Then $\ol{A} := A \cup D(a,\rho^-)$ is a connected  affinoid domain in $D_2$ with good reduction and with maximal point $\zeta_{a,\rho}$ which contains $a$, 
and $\ol{A}_j$  is a connected affinoid with good reduction with maximal point $\zeta_{a_j,\rho_j}$,  and it is a connected component of $f^{-1}(\ol{A})$. Moreover, $\ol{A}_1,\dots,\ol{A}_{N_2}$ are disjoint. We conclude  that $N_2 \leq N_3$. 
\end{rmk}
It follows from the previous remark that, for any $a\in D_2(k)$,  the function $N_a$ is non-increasing, right-continuous,  and has finitely many jumps (= points of discontinuity). Moreover, for $\rho$ close to 1, $N_{a}(\rho)=1$ while for $\rho$ close to 0, $N_a(\rho)\leq \deg(f)$, with strict inequality if and only if $a$ is a branching point. 

An immediate consequence of these properties and the previous theorem is the following
\begin{cor}\label{cor: N properties}
Let $f:D_1\to D_2$ be a finite morphism of open unit discs. 
\begin{enumerate}
 \item For each $a\in D_2(k)$, $N_a$ is uniquely determined by its jumps and by the values it takes at them.
 \item The morphism $f$ is radial if and only if for every two points $a,b\in D_2(k)$ the functions  $N_a$  and $N_b$ coincide. Moreover, the profile of $f$ is uniquely determined by the function $N_a$.
 \end{enumerate}
\end{cor}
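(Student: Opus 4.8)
The plan is to read off both assertions from the structural properties of $N_a$ recorded just above, together with Theorem~\ref{thm: criterion target}. For (1): since $N_a:[0,1)\to\N$ is non-increasing, right-continuous, has only finitely many jumps $0<b_1<\dots<b_n<1$, and equals $1$ for $\rho$ near $1$, it is a step function; right-continuity and monotonicity force $N_a$ to be constant equal to $N_a(0)$ on $[0,b_1)$ and constant equal to $N_a(b_i)$ on each $[b_i,b_{i+1})$, with the convention $b_{n+1}=1$. Hence $N_a$ is reconstructed from the list of jump points $b_i$ together with the values $N_a(b_i)$ (and the initial value $N_a(0)$, which is itself one of the values the function attains). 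This gives (1); the only thing to be careful about is that the value to the left of the first jump must not be forgotten.

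For (2): by the very definition $N_a(\rho)=\#f^{-1}(\zeta_{a,\rho})$, the condition ``$N_a=N_b$ for all $a,b\in D_2(k)$'' is equivalent to the existence of a single function $N:[0,1)\to\N$ with $\#f^{-1}(\zeta_{a,\rho})=N(\rho)$ for all $a\in D_2(k)$ and all $\rho\in[0,1)$. For $f$ \'etale this is precisely the hypothesis of Theorem~\ref{thm: criterion target}, which then yields the equivalence with radiality of $f$ together with the fact that $\pf_f$ is determined by $N$; since $N=N_a$ for any $a$, the ``moreover'' part follows.

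To dispose of the \'etale hypothesis I would check that neither of the two equivalent conditions in (2) can hold unless $f$ is \'etale. If all the $N_a$ coincide, then evaluating at $\rho=0$ shows $\#f^{-1}(a)=N_a(0)$ is independent of $a\in D_2(k)$; since the derivative $f'$ is a nonzero analytic function, $f$ is \'etale over the complement of a discrete subset of $D_2$, so for some (indeed most) $a$ this common value equals $\deg f$, and then $f$ is everywhere unramified. Conversely, if $f$ is radial then, by Definition~\ref{defn: radial discs}, the final (smallest) slope of $v_a(f,\cdot)$ does not depend on $a\in D_1(k)$; this slope equals the ramification index of $f$ at $a$, and as the ramification indices along any fibre $f^{-1}(b)$ sum to $\deg f$ while the branch locus is discrete, the common index must be $1$. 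In both directions we are thereby reduced to Theorem~\ref{thm: criterion target}. I expect this passage from ``finite'' to ``finite \'etale'' to be the only real obstacle; everything else is routine manipulation of right-continuous, integer-valued step functions with finitely many jumps.
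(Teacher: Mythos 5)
Your proposal is correct and follows the same route the paper intends: the paper offers no explicit proof, presenting the corollary as an immediate consequence of the step-function properties of $N_a$ recorded after Remark~\ref{rmk:invdesc} and of Theorem~\ref{thm: criterion target}, which is exactly how you argue. Your additional reduction from ``finite'' to ``finite \'etale'' is a genuine (and correct) supplement: the corollary is stated for finite morphisms while Theorem~\ref{thm: criterion target} assumes \'etaleness, and the paper silently ignores this mismatch, whereas you verify in characteristic $0$ that either side of the equivalence in (2) already forces $f$ to be unramified (constancy of $\#f^{-1}(a)=N_a(0)$ on one side, constancy of the terminal slope of $v_a(f,\cdot)$ on the other, each combined with discreteness of the zero set of $f'$). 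Your remark about not forgetting the value of $N_a$ to the left of the first jump is also apt, since that value is $N_a(0)$ and is part of the data the paper actually uses later in Lemma~\ref{lem: push constant}.
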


We will return to the functions $N_a$ in Lemma \ref{lem: push constant}, where we will study its close relation with multiradius of pushforwards of the constant $p$-adic differential equations (see Section 2).

\subsection{Radializing skeleton of a morphism}\label{sec: radialization}

\subsubsection{}

For any quasi-smooth $k$-analytic curve $X$, the \emph{topological skeleton} $S(X)$ of $X$ is the complement of the union of all open discs in $X$. If $X=A(0;r_1,r_2)$ is an open annulus, the topological skeleton $S(A)$ is homeomorphic to the open segment $(r_1,r_2) \subset \R$. 
\par 
Let $X$ be a quasi-smooth strictly $k$-analytic curve. It follows from the existence of triangulations of quasi-smooth curves (\cite[Chapter 5.]{ducros}) (or semistable reduction) that there exists a locally finite set $\cT$ of type 2 points in $X$, such that $X\setminus \cT$ is a disjoint union of open analytic domains each of which is isomorphic to an open disc or an open annulus (for our purposes we also consider a punctured open disc to be an open annulus of inner radius 0). The union of the set $\cT$ with all the topological skeleta of annuli which are connected components of $X\setminus \cT$ is called \emph{the (semistable) skeleton of $X$ with respect to $\cT$} and we denote it by $\Gamma_{\cT}$. 
%In general, any subset $\Gamma \subset X$ which is of the form $\Gamma = \Gamma_{\cT}$ for some triangulation of $X$ is called a skeleton of $X$. 
The complement $X \setminus \Gamma_{\cT}$ is then a disjoint union of open discs. We also note here that the only case in which $X$ admits an empty topological or semistable skeleton is when $X$ is an open unit disc. 
\par
Now if $X$ is as above and $\Gamma$ a nonempty semistable skeleton of $X$, we define the {\em retraction} function $r_{\Gamma}:X\to \Gamma$ in the following way. If $x\in \Gamma$ then we set $r_{\Gamma}(x)=x$. If $x \in X\setminus \Gamma$, then the connected component of $X \setminus \Gamma$ containing $x$ admits a unique boundary point 
$z \in \Gamma$. 
%such that there exists an open disc $D$ which is a connected component of $X\setminus z$ and such that $D$ does not meet $\Gamma$ and $x\in D$. 
In this case we set $r_{\Gamma}(x)=z$. In the latter situation we will say that $D$ is {\em attached} to the point $z$.

The following is an easy result that will be used in the proof of the Lemma \ref{lem: controlling rat}.
\begin{lemma}\label{lem: finding rat}
 Let $D$ be an open disc, $\Gamma$ a nonempty skeleton of $D$ and $x\in D$ a point in $D\setminus \Gamma$. Then, there exists $y\in D(k)$ with $r_{\Gamma}(x)=r_{\Gamma}(y)$. 
\end{lemma}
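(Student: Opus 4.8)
The plan is to use the structure of the skeleton $\Gamma$ of the open disc $D$ and the fact that $D\setminus\Gamma$ is a disjoint union of open discs, each attached to a single point of $\Gamma$. Since $x\in D\setminus\Gamma$, let $D'$ be the connected component of $D\setminus\Gamma$ containing $x$, and let $z\in\Gamma$ be its unique boundary point, so $r_\Gamma(x)=z$. It suffices to produce a $k$-rational point $y\in D'(k)$, because then automatically $r_\Gamma(y)=z=r_\Gamma(x)$, the component $D'$ being exactly the set of points retracting to $z$.

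So the real content is: every open disc $D'$ (here a connected component of $D\setminus\Gamma$, hence genuinely an open $k$-disc by the definition of a skeleton) contains a $k$-rational point. This is immediate: $D'$ is isomorphic to a standard open disc $D(0,s^-)$ in $\A^1_k$ for some $s>0$, and since $k$ is algebraically closed it is in particular infinite with $|k^\times|$ dense in $\R_{>0}$, so $D(0,s^-)(k)$ is nonempty — indeed it contains $0$. Pulling back a $k$-rational point of $D(0,s^-)$ through the isomorphism gives the desired $y\in D'(k)\subset D(k)$.

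I would write this up in essentially two sentences: first identify $D'$ and its attaching point $z$, noting $r_\Gamma(x)=z$; then pick any $y\in D'(k)$ (which exists since $D'$ is an open $k$-disc and $k$ is nontrivially valued, hence such discs have $k$-points), and observe $r_\Gamma(y)=z=r_\Gamma(x)$ because $D'$ is precisely the fiber $r_\Gamma^{-1}(z)\setminus\{z\}$ together with nothing else — every point of $D'$ retracts to $z$. There is no real obstacle here; the only thing to be careful about is invoking correctly that the connected components of the complement of a skeleton are honest open discs (this is part of the definition of skeleton recalled in the introduction and in Section~\ref{sec: radialization}), and that such discs carry $k$-rational points because $k$ is algebraically closed and nontrivially valued. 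The lemma is a bookkeeping statement whose whole point is to let one replace an arbitrary point by a rational one with the same retraction, and the proof is correspondingly short.
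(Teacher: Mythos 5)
Your proof is correct and follows exactly the paper's argument: take the connected component $D'$ of $D\setminus\Gamma$ containing $x$ (an open disc, all of whose points retract to the same point of $\Gamma$) and pick any $k$-rational point in it. The only difference is that you spell out why $D'(k)$ is nonempty, which the paper leaves implicit.
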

\begin{proof}
 Let $D'$ be an open disc which is a connected component of $D\setminus\Gamma$ that contains $x$. Then since $r_{\Gamma}(D')=r_{\Gamma}(x)$, any rational point $y\in D'$ will do the job.
\end{proof}

\subsubsection{} Let $f:Y\to X$ be a finite morphism of quasi-smooth strictly $k$-analytic curves. By a skeleton of the morphism $f$ we mean a pair $(\Gamma_Y,\Gamma_X)$ such that $\Gamma_Y$ (resp. $\Gamma_X$) is a skeleton of $Y$ (resp. $X$) and such that $f^{-1}(\Gamma_X)=\Gamma_Y$. 

The following  result is  well known.
\begin{thm}
Any finite  morphism $f:Y \to X$ of quasi-smooth (strictly) $k$-analytic curves admits a (nonempty) skeleton.
\end{thm}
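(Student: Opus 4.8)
The plan is to prove the existence of a skeleton of the morphism $f:Y\to X$ by reducing to the statement for $X$ alone and then correcting $\Gamma_Y$ to make it $f$-stable. First I would invoke the existence of triangulations (or semistable reduction) for the quasi-smooth curve $X$ to obtain a nonempty semistable skeleton $\Gamma_X^0$ of $X$; the only subtle case is $X\cong D(0,1^-)$, where one must enlarge by adding at least one type $2$ point so that $\Gamma_X^0$ is nonempty, which is harmless. Set $\Gamma_Y^0:=f^{-1}(\Gamma_X^0)$. The point is that $\Gamma_Y^0$ is a closed, locally finite subgraph of $Y$ containing all the type $2$ points of $Y$ that lie over type $2$ points of $\Gamma_X^0$, but $Y\setminus\Gamma_Y^0$ need not yet be a disjoint union of open discs: over an open disc $D\subset X\setminus\Gamma_X^0$ the preimage $f^{-1}(D)$ is a finite disjoint union of connected components, each finite over $D$, and such a component is a disc or contains a nontrivial skeleton only if the induced finite cover of $D$ is étale over the maximal point in a suitable sense.

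The key step is therefore the following: for each connected component $Z$ of $Y\setminus\Gamma_Y^0$, with image an open disc $D\subset X\setminus\Gamma_X^0$, the restriction $f|_Z:Z\to D$ is a finite morphism onto an open disc, and by Riemann--Hurwitz applied to a finite cover of a disc (equivalently, by the fact that a finite cover of an open disc that is unramified in codimension one and has no ``excess'' genus is itself a disc), $Z$ is either an open disc or an open disc with a finite nonempty topological skeleton $S(Z)$ coming from ramification loci along the radius. In either case $S(Z)$ is a finite (possibly empty) graph inside $Z$, and one enlarges $\Gamma_Y^0$ to $\Gamma_Y:=\Gamma_Y^0\cup\bigcup_Z S(Z)$. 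One then checks that $Y\setminus\Gamma_Y$ is a disjoint union of open discs, so $\Gamma_Y$ is a semistable skeleton of $Y$; local finiteness is preserved because $f$ is finite and $\Gamma_X^0$ is locally finite, so only finitely many $Z$ meet any compact part of $Y$. Finally one must enlarge $\Gamma_X^0$ so that $f^{-1}$ of it equals $\Gamma_Y$: set $\Gamma_X:=\Gamma_X^0\cup f(\Gamma_Y\setminus\Gamma_Y^0)$ and observe that $f(\Gamma_Y\setminus\Gamma_Y^0)$ is again a locally finite graph in $X$ (image of a finite graph under a finite map), hence $\Gamma_X$ is a semistable skeleton of $X$, and by construction $\Gamma_Y\subseteq f^{-1}(\Gamma_X)$.

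To get the reverse inclusion $f^{-1}(\Gamma_X)\subseteq\Gamma_Y$ one iterates: replacing $\Gamma_Y$ by $f^{-1}(\Gamma_X)$ may introduce new pieces of skeleton over the newly added edges of $\Gamma_X$, but each such edge is an annulus lying over an annulus, and over an annulus a finite cover is a disjoint union of annuli (no new interior skeleton is produced beyond the skeleton mapping to the skeleton). Concretely, after one more step of the form $\Gamma_Y':=f^{-1}(\Gamma_X)$, $\Gamma_X':=\Gamma_X\cup f(\Gamma_Y'\setminus(\text{old}))$, the process stabilizes because the extra contributions are skeleta of annuli over annuli, which satisfy $f^{-1}(S(A'))=\bigsqcup S(A)$ exactly. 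One then concludes $f^{-1}(\Gamma_X)=\Gamma_Y$ with $\Gamma_Y,\Gamma_X$ semistable (hence nonempty, since $\Gamma_X^0$ was made nonempty), proving the theorem.

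The main obstacle I anticipate is the second paragraph: showing cleanly that each connected component $Z$ of $Y\setminus\Gamma_Y^0$ is an open disc possibly with a finite topological skeleton, and controlling that skeleton. This is really the content of the stable reduction theorem for covers of discs/annuli; rather than reprove it, the cleanest route is to cite Coleman's theorem (referenced in the introduction) or the corresponding statement in Ducros' book on the structure of finite morphisms of curves, using that $f^{-1}$ of a locally finite graph containing a triangulation of $X$ is contained in a triangulation of $Y$ once one adds the ramification skeleton, and conversely that the image of a triangulation of $Y$ is contained in a triangulation of $X$. In short, the proof is an assembly of: (i) existence of triangulations for $X$, (ii) good behavior of finite morphisms over discs and annuli, and (iii) a finite iteration to symmetrize $\Gamma_Y=f^{-1}(\Gamma_X)$; the delicate part is (ii), which I would invoke rather than reprove.
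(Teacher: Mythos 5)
The paper does not prove this theorem: it is recorded with the remark that it is well known (it is the analytic form of simultaneous semistable reduction, cf.\ \cite{Col03}, \cite{ducros}, \cite{TeMU}), so there is no in-paper argument to compare yours against. Your overall strategy --- pull back a skeleton of $X$, enlarge on the $Y$-side by the topological skeleta of the components over the residue discs, push the correction forward, and symmetrize --- is indeed the standard route, and the hard local inputs you propose to cite rather than reprove are legitimately citable. However, two of your justifications do not hold as stated, and one of them is the step that is supposed to make the construction terminate.

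The termination step is the real gap. The claim that a finite cover of an open annulus is a disjoint union of open annuli, with $f^{-1}(S(A'))=\bigsqcup S(A)$, is false: a degree-$2$ cover of an annulus branched at two interior points of distinct radii is connected of genus $0$ with four ends, so its skeleton is a tree with vertices of valence $3$, not a segment. Hence your one-step stabilization is not proved as written. The correct (and simpler) termination argument avoids annuli entirely: once $\Gamma_X$ is a skeleton of $X$ containing $f(\Gamma_Y)$ for \emph{some} skeleton $\Gamma_Y$ of $Y$, any connected component $W$ of $f^{-1}(D')$, with $D'$ a component of $X\setminus\Gamma_X$, is disjoint from $\Gamma_Y$, hence lies in a disc component $D''$ of $Y\setminus\Gamma_Y$; since $f(D'')$ is an open disc with boundary on $\Gamma_X$ and $D'\cap\Gamma_X=\emptyset$, one has $D'\subset f(D'')$, so $W$ is a component of the preimage of an open sub-disc under the finite morphism of open discs $f_{|D''}:D''\to f(D'')$ and is therefore itself an open disc by Lemma~\ref{lem: basic discs}~(2). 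Thus $(f^{-1}(\Gamma_X),\Gamma_X)$ is already a skeleton of $f$ and no iteration is needed. Secondly, the local finiteness of $\Gamma_Y^0\cup\bigcup_Z S(Z)$ does not follow from finiteness of $f$ together with local finiteness of $\Gamma_X^0$: infinitely many components $Z$ attached to a single fibre of $\Gamma_Y^0$ could a priori fail to be discs, and their skeleta would then accumulate there; ruling this out (and the finiteness of each $S(Z)$, which Riemann--Hurwitz alone does not give in the analytic setting) again requires semistable reduction applied to $Y$ itself, e.g.\ to the affinoid $f^{-1}(C_\xi)$ over an affinoid neighbourhood of $\xi$. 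Once you start from a genuine skeleton of $Y$, both problems disappear and your construction goes through.
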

% \begin{proof}
%  If $Y$ and $X$ are smooth proper curves then this follows from \cite{Col03}, that is, from the existence of the simultaneous semistable reduction of curves $Y$ and $X$. More generally, one can derive the proof for general curves as in \V{ref. It seems there are many new versions of this result. Paper on ramification locus}. 
%  \end{proof}
 Let $f:Y\to X$ be a finite morphism of quasi-smooth strictly $k$-analytic curves and let $\Gamma_f=(\Gamma_Y,\Gamma_X)$ be a nonempty skeleton of $f$. Then it is a direct consequence of the definition of  skeleton of a morphism that for any open disc $D$ which is a connected component of $Y\setminus \Gamma_Y$, the restriction $f_{|D}:D\to D'$ is a finite morphism of open discs, and $D'$ is a connected component of $X\setminus \Gamma_X$. We recall that any such a disc $D$ can be identified with an open unit disc.
 \begin{defn}
  Let $f:Y\to X$ and $\Gamma_f=(\Gamma_Y,\Gamma_X)$ be as above. We say that the morphism $f$ is radial with respect to $\Gamma_f$ if for any two open discs $D_1$ and $D_2$ that are attached to the same point in $\Gamma_Y$ (that is $r_{\Gamma_Y}(D_1)=r_{\Gamma_Y}(D_2)$), the restrictions $f_{|D_1}$ and $f_{|D_2}$ are radial morphisms having the same profile function. 
 \end{defn}
 \begin{rmk}\label{rmk: 2 out 3}
  We will use the following, easily establishing fact (\cite[Lemma 3.3.13.]{TeMU}). Suppose that $f:Y\to Z$ and $g:Z\to X$ are two finite \'etale morphisms of quasi-smooth strictly $k$-analytic curves, and suppose that $\Gamma_f=(\Gamma_Y,\Gamma_Z)$ and $\Gamma_{g}=(\Gamma_Z,\Gamma_X)$ are their respective skeleta, so that $\Gamma_{g\circ f}=(\Gamma_Y,\Gamma_X)$ is a skeleton for $g\circ f$. Then, if two out of the three skeleta $\Gamma_f$, $\Gamma_g$ and $\Gamma_{g\circ f}$ are radializing, then so is the third one. 
 \end{rmk}

 \begin{rmk}
  One of the main results of \cite{TeMU} is the existence of a radializing skeleton for finite morphisms of quasi-smooth strictly $k$-analytic curves (\lc Theorem 3.4.11.). We will reprove this result by establishing a close relation between the radializing skeleta of a morphism and the controlling graphs of the pushforward of the constant connection by the morphism, which will be the subject of Sections \ref{sec: push} and \ref{sec: rad and contr}.
 \end{rmk}

 \subsection{Factorization of morphisms}

 \subsubsection{} We recall briefly some properties of reduction of affinoid curves. For more detail we refer  to \cite[Section 6.3]{BGR}, \cite[Section 2.4]{Berkovich} or to the   book project \cite{ducros}. If $X$ is a quasi-smooth, strictly $k$-affinoid curve, its canonical reduction  (\cite[Section 2.4]{Berkovich}), denoted by $\wtilde{X}$, is a $\kt$-algebraic affine curve. If $\cA_X$ is the corresponding affinoid algebra, let $\cA_X^\circ$ denote the $k^\circ$-algebra $\{f\in \cA\mid \sup_{x\in X}|f(x)|\leq 1\}$ and let $\cA_X^{\circ \circ}:=\{f\in \cA_X^\circ\mid \sup_{x\in X}|f(x)|<1\}$. Then, the $\wtilde{k}$-algebra of regular functions $\cO_{\wtilde{X}}$ on $\wtilde{X}$ is  $\cA_X^\circ/\cA_X^{\circ \circ}$ and $\wtilde{X}=\Spec \cA_X^\circ/\cA_X^{\circ \circ}$. 
 
%  We will use freely that for every regular function $\wtilde{f}\in \cO_{\wtilde{X}}$, there exists a function $f\in \cA_X^\circ$ whose reduction (that is the image modulo $\cA_X^{\circ \circ}$ ) is $\wtilde{f}$. 
 \par
 Let us denote the reduction map by $\red:X\to \wtilde{X}$. If $\wtilde{X}$ is smooth, we say that $X$ has  (canonical)  good reduction. In this case, the Shilov boundary of $X$ consists of a single point (\cite[Proposition 2.4.4.]{Berkovich}). 
\par 
 Let $f:Y\to X$ be a finite morphism of quasi-smooth strictly $k$-affinoid curves with good reduction with maximal points $\eta$ and $\xi$, respectively. 
 % Suppose that $X$ is a strictly $k$-affinoid curve with good reduction and the maximal point $\xi$. 
 We already recalled that $X\setminus\{\xi\}$ is a disjoint union of open unit discs, each of which is attached to the point $\xi$. In this case the map $\red$ induces a 1-1 correspondence between the smooth points of $\wtilde{X}$ and the connected components of  $X\setminus\{\xi\}$ (\cite[Theorem 4.3.1]{Berkovich}, \cite[Section 4.2.11.1]{ducros}). 
 For any  $y \in Y$, we have $\red(f(y))=\wtilde{f}(\red(y))$. 
  If $D$ is any disc in $Y$ attached to $\eta$, then $f(D)$ is a disc in $X$ attached to $\xi$ and for every disc $E$ in $X$ attached to $\xi$, $f^{-1}(E)$ is a disjoint union of discs in $Y$, attached to $\eta$. 
 We will use freely this correspondence in what follows. 
 \par

% We already explained how the  reduction map induces a finite morphism $\wtilde{f}:\wtilde{Y}\to \wtilde{X}$   of the same degree as $f$. 
% $$
% [\sH(\eta):\sH(\xi)] = [\wtilde{\sH(\eta)}:\wtilde{\sH(\xi)}] = [\kappa (\wtilde{Y}): \kappa (\wtilde{X})]
% $$
% because of the stability of $\sH(\xi)$.

% Finally, we have the equality 
% \begin{equation}\label{eq: reduction mult}
%  \deg(f_{|D})=e_{\red(D)},
% \end{equation}
%where $e_{\red(D)}$ is the multiplicity of the point $\red(D)\in\wtilde{Y}$ for the morphism $\wtilde{f}$ 
%(\cite[Th\'eor\`eme 4.3.13]{ducros}).

 \subsubsection{}\label{sec: factorization reduction} 
 Recall that if $\wtilde{f}:\wtilde{Y}\to \wtilde{X}$ is a finite morphism of smooth  connected $\wtilde{k}$-algebraic curves, then $\wtilde{f}$ factors canonically as
 \begin{equation}\label{eq: red fact}
 \wtilde{Y}\xrightarrow{\wtilde{f}_\ins} \wtilde{Z}\xrightarrow{\wtilde{f}_\sep}\wtilde{X}
 \end{equation}
 where $\wtilde{f}_\ins:\wtilde{Y}\to \wtilde{Z}$ is a finite, radicial morphism while $\wtilde{f}_\sep:\wtilde{Z}\to \wtilde{X}$ is finite and generically \'etale. The factorization in fact corresponds to the field extensions $\kappa(\wtilde{X})\subset \kappa(\wtilde{Z})\subset \kappa(\wtilde{Y})$, where  $\kappa(\wtilde{Z})$ is the separable closure of $\kappa(\wtilde{X})$ in $\kappa(\wtilde{Y})$. More precisely, we have $\wtilde{Z} \iso \wtilde{Y}^{(p^r)}$, where $\wtilde{Y}\to \wtilde{Y}^{(p^r)}$ is the $r$-fold relative Frobenius morphism and where $p^r$ is the degree of $\kappa(\wtilde{Y})$ over $\kappa(\wtilde{Z})$ (\cite[p. 291]{Liu} or \cite[Part 3, Prop. 50.13.7]{stacks}).   
  
 \begin{defn} \label{red-prop} 
  Let $f:Y\to X$ be a finite morphism of strictly  $k$-affinoid curves having good reduction. We say that $f$ is 
   a \emph{residually separable} (resp. \emph{residually radicial}, resp. \emph{residually \'etale})  morphism (at the maximal point of $Y$) if the reduced morphism $\wtilde{f}:\wtilde{Y}\to \wtilde{X}$ is generically \'etale (resp.  radicial, resp.  \'etale) morphism  of smooth affine $\wtilde{k}$-algebraic curves. 
   We put $\frs(f):=\deg(\wtilde{f}_\sep)$ and $\fri(f):=\deg(\wtilde{f}_\ins)$. 
  \end{defn}
    \begin{rmk} \label{red-int-pt1} Definition~\ref{red-prop} extends to the case of a quasi-finite morphism $f:Y\to X$ of quasi-smooth $k$-analytic curves at an internal  point of type 2,  $\eta \in Y$. In that case, reduction at $\eta$ is a morphism of smooth  projective $\kt$-curves and $f$ \emph{residually radicial} (resp. \emph{residually \'etale}) means that $\wtilde{f}$ is radicial (resp. \'etale) as such.
    \end{rmk}
  \begin{rmk}  \label{red-int-pt2} Definition~\ref{red-prop} and Remark~\ref{red-int-pt1} extend  to  a quasi-finite morphism $f:Y\to X$ of quasi-smooth $k$-analytic curves and to any point $\eta \in Y$ of type $>1$, by a suitable extension of scalars. See  \cite[Section 1.2.]{BoPoPush}. 
  This generalization is not needed for our present purposes.
%  However, for our 
%  present it will be enough to consider these notions at points of type 2 points or, equivalently, for finite morphisms between strictly $k$-affinoid curves with good reduction.
  \end{rmk}

  The main result of this section is the existence of a lifting of the canonical factorization \eqref{eq: red fact} for a morphism of affinoid curves. We will be able to lift the factorization for the  class of morphisms described in the next definition and in the lemma that follows it.
  \begin{defn} \label{resunifram}
   Let $f:Y\to X$ be a finite morphism of quasi-smooth, strictly $k$-affinoid curves with good reduction. We say that $f$ is \emph{uniformly residually ramified} (at the maximal point of $Y$)  if the degree $\deg(f_{|D})$, where $D$ is any open disc in $Y$ attached to its maximal point, does not depend on $D$. This is  the case iff  the morphism $\wtilde{f}$ has the same multiplicity at every closed point $\wtilde{y}\in\wtilde{Y}$.
  \end{defn}
  
  \begin{lemma}\label{lem: uniform ram properties}
   Let $f:Y\to X$ be a finite \'etale morphism of strictly $k$-affinoid curves with good reduction. Then, $f$ is uniformly residually ramified if and only if   the following equivalent conditions hold: 
   \begin{enumerate}
   \item Let $\wtilde{f}=\wtilde{f}_\sep\circ\wtilde{f}_\ins$ be as in \eqref{eq: red fact}. Then, $\wtilde{f}_\sep$ is \'etale.
    \item For every open disc $D$ attached to the maximal point of $Y$, $\deg(f_{|D})=\fri(f)$ and for every open disc $E$ attached to the maximal point of $X$, the number of connected components of $f^{-1}(E)$ is equal to $\frs(f)$.
   \end{enumerate}
  \end{lemma}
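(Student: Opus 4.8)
The plan is to prove the equivalence $\text{uniformly residually ramified} \Leftrightarrow (1) \Leftrightarrow (2)$ by passing to the canonical reduction and interpreting everything in terms of the reduced morphism $\wtilde f$. First I would record the dictionary provided by good reduction: closed points of $\wtilde Y$ (resp. $\wtilde X$) correspond bijectively to open discs attached to the maximal point $\eta$ of $Y$ (resp. $\xi$ of $X$), and for such a disc $D$ with reduction $\wtilde y \in \wtilde Y$, $\deg(f_{|D})$ equals the multiplicity $m_{\wtilde y}(\wtilde f)$ of $\wtilde f$ at $\wtilde y$ (this is exactly the Newton-polygon computation recalled before Lemma~\ref{lem: basic discs}, applied at $\eta$). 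So ``uniformly residually ramified'' is by definition the statement that $\wtilde f$ has constant multiplicity $m$ at all closed points of $\wtilde Y$.

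Next I would analyze the multiplicity of the composite $\wtilde f = \wtilde f_\sep \circ \wtilde f_\ins$ of \eqref{eq: red fact}. Since $\wtilde f_\ins$ is (an $r$-fold relative Frobenius, hence) radicial and bijective on points with $m_{\wtilde y}(\wtilde f_\ins) = p^r = \fri(f)$ at \emph{every} $\wtilde y$, multiplicativity of multiplicities along a composite gives, for $\wtilde y \mapsto \wtilde z := \wtilde f_\ins(\wtilde y)$,
\[
m_{\wtilde y}(\wtilde f) = m_{\wtilde y}(\wtilde f_\ins)\cdot m_{\wtilde z}(\wtilde f_\sep) = \fri(f)\cdot m_{\wtilde z}(\wtilde f_\sep).
\]
Hence $m_{\wtilde y}(\wtilde f)$ is constant (equal to some $m$) over all $\wtilde y$ iff $m_{\wtilde z}(\wtilde f_\sep)$ is constant over all $\wtilde z \in \wtilde Z$. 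Now $\wtilde f_\sep$ is finite and generically \'etale between smooth curves, so its multiplicity is $1$ at all but finitely many points; a generically \'etale finite morphism of smooth curves has constant multiplicity iff that constant is $1$, i.e. iff $\wtilde f_\sep$ is \'etale (unramified + flat, flatness being automatic for finite maps of smooth curves). This proves uniformly residually ramified $\Leftrightarrow$ (1), and along the way that the constant value is $m = \fri(f)$.

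For (1) $\Leftrightarrow$ (2): assuming (1), for any open disc $D$ attached to $\eta$ we get $\deg(f_{|D}) = m_{\wtilde y}(\wtilde f) = \fri(f)$ by the above. For the second half of (2), given an open disc $E$ attached to $\xi$ with reduction $\wtilde x \in \wtilde X$, the connected components of $f^{-1}(E)$ correspond to the discs attached to $\eta$ lying over $E$, i.e. to the points of $\wtilde f^{-1}(\wtilde x)$; since $\wtilde f_\ins$ is a bijection on points, $\#\wtilde f^{-1}(\wtilde x) = \#\wtilde f_\sep^{-1}(\wtilde x)$, and as $\wtilde f_\sep$ is \'etale this cardinality is $\deg(\wtilde f_\sep) = \frs(f)$ for every $\wtilde x$. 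Conversely, if either condition in (2) holds for all discs, it forces constancy of the relevant multiplicity/fiber-cardinality, hence via the displayed factorization forces $\wtilde f_\sep$ to have constant multiplicity, resp. to be everywhere unramified, hence \'etale; so (2) $\Rightarrow$ (1). The main obstacle is purely bookkeeping: making the correspondence ``discs attached to the maximal point'' $\leftrightarrow$ ``closed points of the reduction'' fully precise and compatible with $f$ (so that fibers and multiplicities match on the two sides), and being careful that $Y, X$ here are \emph{affine} $\kt$-curves, so one should phrase ``\'etale'' and ``constant multiplicity'' for finite morphisms of smooth affine curves — but all of this is standard and already invoked in the preceding subsections.
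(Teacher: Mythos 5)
Your proposal is correct and follows essentially the same route as the paper: both pass to the canonical reduction, use the multiplicativity $e_{\wtilde f,\wtilde y}=\fri(f)\cdot e_{\wtilde f_\sep,\wtilde f_\ins(\wtilde y)}$ coming from the factorization \eqref{eq: red fact} to identify uniform residual ramification with $\wtilde f_\sep$ having constant multiplicity (hence being \'etale), and then count fiber points via the disc/closed-point dictionary to get condition (2). The only cosmetic difference is that the paper derives the fiber count from the degree formula $\sum_{\wtilde y\in\wtilde f^{-1}(\wtilde x)}e_{\wtilde f,\wtilde y}=\deg(\wtilde f)$, whereas you use bijectivity of $\wtilde f_\ins$ on points together with \'etaleness of $\wtilde f_\sep$ --- the same fact in a different packaging.
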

  \begin{proof}
   {\em (1)} Let $\wtilde{y} \in \wtilde{Y}$ be a closed point and let $e_{\wtilde{f},\wtilde{y}}$ denote the algebraic multiplicity 
   of $\wtilde{f}$ at $\wtilde{y}$. Then from \eqref{eq: red fact}
   $$
   e_{\wtilde{f},\wtilde{y}}=\fri(f)\, e_{\wtilde{f}_\sep,\wtilde{f}_\ins(\wtilde{y})},
   $$
   so we see that uniformity of residual  ramification is equivalent to the fact that, for any closed point $\wtilde{y}\in\wtilde{Y}$, $e_{\wtilde{f}_\sep,\wtilde{f}_\ins(\wtilde{y})}$ is the same number,  necessarily $=1$. 
   This in turn means  that $\wtilde{f}_\sep$ is \'etale.
   
  {\em (2)} Continuing {\em (1)}, $\wtilde{f}_\sep$ \'etale is equivalent to the condition that every point $\wtilde{y}\in \wtilde{Y}(\wtilde{k})$ has the same multiplicity equal to $\fri(f)$. This is equivalent to that, for every point $\wtilde{x}\in \wtilde{X}(\wtilde{k})$,  $\#\wtilde{f}^{-1}(\wtilde{x})=\deg(\wtilde{f}_\sep)$ (using  $\sum_{\wtilde{y}\in \wtilde{f}^{-1}(\wtilde{x})}e_{\wtilde{f},\wtilde{y}}=\deg(\wtilde{f})$). Finally, this is equivalent to the condition that for every open disc $E$ attached to the Shilov point of $X$,
  $$
  \#f^{-1}(E)=\#\wtilde{f}^{-1}(\wtilde{x})=\frs(f).
  $$
  \end{proof}
  \begin{rmk} \label{rmk:counterexamples}  Let $f:Y \to X$ be a finite   morphism  of strictly $k$-affinoid curves with good reduction and let $\eta$ be the maximal point of $Y$.
  \ben
  \item 
   If $f$ is \'etale and residually separable at  $\eta$,  then it is residually \'etale and therefore also residually uniformly ramified at $\eta$. 
To prove this it will suffice  to show that if $f$ is residually separable at $\eta$,  $e_{\wtilde{f},\wtilde{y}} = 1$ for any $\wtilde{y} \in \wtilde{Y}(\kt)$. 
   We let $D$ be the open unit disc attached at $\eta$ and corresponding to $\wtilde{y}$. Let 
   $$T \in \cO_{Y,\eta} = \kappa(\eta) \subset \sH(\eta)
   $$ 
   be a coordinate on $D$. We express $f$ as a power series $f(T)$ with coefficients in $\kc$. Then $f$  \'etale at $\eta$ implies  that $\sup_{a \in D(k)}|(df/dT) (a)|=1$ while 
   $\wtilde{f}$ generically \'etale means that $d\wtilde{f}/d\wtilde{T} \neq 0$.  If $e_{\wtilde{f},\wtilde{y}} >1$ 
   we would have $(d\wtilde{f}/d\wtilde{T}) (\wtilde{y})  = 0$ and therefore the value of $|df/dT|$ could not be constant on $D$. By the theory of valuation polygons, $df/dT$ would have a zero in $D$. Hence, $f$ would not be \'etale in $D$. Contradiction. 
\item   If $f$ is \'etale, but not residually separable  at $\eta$, then $f$ need not be residually uniformly ramified at $\eta$.  For example, $S=a\, T+T^{2\,p}$, $a\in k^\circ$, $|p|<|a|$ is a finite \'etale morphism from the closed $T$-disc $D(0,1)$ to itself. However,  the separable part of $\wtilde{f}$ is ramified over $0$, hence the morphism is not residually uniformly ramified.
%      \item Assume $Y= X = D(0,1)$, with coordinate $T$, and $f$ is  $T \mapsto T^p$, so that $f$ is finite of degree $p$ but is ramified at $0$. Then 
% $f$ induces a finite map of degree $p$ of the closed disc $D(0,r)$  of radius $r:=p^{-1/(p-1)}$ onto $D(0,r^p)$, This map  is residually separable
%at the maximal point $\alpha$ of $D(0,r)$, because  it is \'etale in any maximal open disc of radius $r$ in $D(0,r)$, except on $D(0,r^-)$. So, by the first part of this remark, $\wtilde{f}$ is \'etale at any $\kt$-point of $\sC_\alpha = \P^1_{\kt}$ but two.  
%But $f$ is residually radicial at $\alpha$. In fact, let $\pi \in k$ be such that $(\pi)^{p-1} = -p$, so that $|\pi| = r$. The map $f$ takes $\alpha$ to $\beta$ = maximal point of the closed disc $D(0,r^p)$. A parameter at $\alpha$ is  $t=T/\pi$ while a parameter at $\beta$ is $s=T/\pi^p$. So, the map $\sH(\beta) \to \sH(\alpha)$ takes $s$ to $T^p/\pi^p = t^p$. So, the reduction of this map is
%$\kt(s) \to \kt (t)$, $s \mapsto t^p$, which is purely inseparable of degree $p$.
\item
  Notice that if $f$ is residually uniformly ramified  at an interior point of type 2 (necessarily of resdual genus $0$),  the reduced morphism is the product of a finite radicial morphism of a projective line over $\kt$ followed by a finite \'etale morphism of projective lines over $\kt$. But the latter is an isomorphism, so a map residually uniformly ramified  at an interior point of type 2 reduces to a power of  relative Frobenius. 
  \item We do not know whether in case $f$  is residually separable at $\eta$ but is ramified at some point $y \in Y(k)$, there may exist points of type 2 in the maximal open disc in $Y$ centered at $y$ at which $f$ is residually radicial.
      \een
  \end{rmk}

  Suppose now that $f:Y\to X$ is a finite morphism of strictly affinoid curves with good reduction which is radial with respect to the skeleton $(\{\eta\},\{\xi\})$ coming from the Shilov points of $Y$ and $X$, respectively. Then, by the definition of radiality, for every open disc $D$ in $Y$, attached to the Shilov point of $Y$, $\deg(f_{|D})$ is the same for all of them. Consequently,
  \begin{cor}\label{cor: radial implies uniformity}
   A radial morphism of strictly quasi-smooth $k$-affinoid curves with good reduction is residually uniformly ramified.
  \end{cor}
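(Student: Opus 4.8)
The plan is to unwind the definitions. First, since $Y$ has good reduction with maximal point $\eta$, the complement $Y\setminus\{\eta\}$ is a disjoint union of open discs, and every open disc $D\subset Y$ attached to $\eta$ is one of these connected components; in particular $(\{\eta\},\{\xi\})$ is a genuine skeleton of $f$, so the hypothesis that $f$ is radial with respect to it is available. By the definition of radiality with respect to a skeleton, for any two open discs $D_1,D_2\subset Y$ attached to $\eta$ the restrictions $f_{|D_1}$ and $f_{|D_2}$ are radial morphisms of open unit discs with the same profile function. I would first record this reduction to a statement about the restrictions $f_{|D}$.

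The second step is to pass from ``same profile'' to ``same degree''. For this I would invoke the proof of Lemma~\ref{lem: weakly 1}: for a finite morphism $g$ of open unit discs, the first --- hence, by concavity, the largest --- slope of the valuation polygon of $g$ equals $\deg g$, since this slope counts the solutions in the disc of the equation $g=c$. By Remark~\ref{rmk: profile valuation} this slope is read off the profile $\pf_g$ (it is the exponent of the monomial piece of $\pf_g$ near $1$). Applying this to $f_{|D_1}$ and $f_{|D_2}$, whose profiles coincide, yields $\deg(f_{|D_1})=\deg(f_{|D_2})$.

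Since $D_1,D_2$ were arbitrary open discs attached to $\eta$, the degree $\deg(f_{|D})$ is independent of $D$, which is exactly the defining condition of Definition~\ref{resunifram}; hence $f$ is uniformly residually ramified. There is no genuine obstacle here: the only point deserving a word of care is that the good reduction hypothesis on $Y$ is what guarantees that every disc under consideration is attached to the single point $\eta$, so that the radiality assumption applies to each pair of them --- this is exactly the observation already made in the sentence preceding the statement.
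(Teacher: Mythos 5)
Your argument is correct and follows the same route as the paper: the paper's proof is exactly the observation that radiality forces all the restrictions $f_{|D}$ to discs $D$ attached to $\eta$ to have the same profile, hence the same degree, which is the defining condition of Definition~\ref{resunifram}. You merely make explicit (via the slope-counting argument of Lemma~\ref{lem: weakly 1}) why the profile determines the degree, which the paper leaves implicit.
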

  
  In the other direction, we have
  \begin{cor}\label{cor: res etale radial}
 A residually \'etale morphism $f:Y\to X$ of strictly $k$-affinoid curves with good reduction is radial with respect to the skeleton coming from the Shilov points of $Y$ and $X$, respectively.
  \end{cor}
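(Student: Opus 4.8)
The plan is to reduce the corollary to two facts: first, that residual étaleness forces every open disc attached to the Shilov point $\eta$ of $Y$ to have image–degree one, and second, that a degree–one surjective finite morphism of open unit discs is radial with profile the identity. For the first fact I would argue as follows. Let $D$ be an open disc in $Y$ attached to $\eta$; by the reduction theory of affinoid curves with good reduction recalled in Section~\ref{sec: factorization reduction}, $D$ corresponds to a smooth closed point $\wtilde y\in\wtilde Y$, the image $f(D)$ is an open disc attached to the Shilov point $\xi$ of $X$, and — by the Newton–polygon/reduction identification recalled just before Lemma~\ref{lem: basic discs}, applied at the type-$2$ point $\eta$ (whose associated $\kt$-curve is $\wtilde Y$) — the finite morphism $f_{|D}\colon D\to f(D)$ satisfies $\deg(f_{|D})=e_{\wtilde f,\wtilde y}$, the algebraic multiplicity of $\wtilde f$ at $\wtilde y$. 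Since $\wtilde f$ is étale, $e_{\wtilde f,\wtilde y}=1$ for every closed point of $\wtilde Y$, so $\deg(f_{|D})=1$ for every such $D$. (Equivalently: residual étaleness makes $\wtilde f_{\ins}$ an isomorphism, so $\fri(f)=1$ and $f$ is residually uniformly ramified, and one may quote Lemma~\ref{lem: uniform ram properties}(2).)

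For the second fact, fix such a disc $D$, a point $a\in D(k)$, and compatible coordinates $(T,S)$ for $g:=f_{|D}\colon D\to f(D)$ with $T$ centered at $a$, so that $g$ is given by a power series $S(T)=\sum_{i\ge 1}c_iT^i$. The valuation polygon $v_a(g,\cdot)$ is concave with integral slopes lying between its lowest slope — the order of vanishing of $S(T)$ at $T=0$, which is $\ge 1$ — and its highest slope $\deg g=1$ (as in the proof of Lemma~\ref{lem: weakly 1}); hence it is the single affine function $\lambda\mapsto v(c_1)+\lambda$ with $c_1\ne 0$. By Remark~\ref{rmk: profile valuation} this gives $\pf_{a,g}(\rho)=|c_1|\,\rho$. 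Since $g$ maps $D$ onto the disc $f(D)$, Lemma~\ref{lem: basic discs}(3) forces $\pf_{a,g}$ to be an increasing bijection of $[0,1)$ onto $[0,1)$, whence $|c_1|=1$ and $\pf_{a,g}=\mathrm{id}_{[0,1]}$. As $a\in D(k)$ was arbitrary, $f_{|D}$ is radial with profile the identity.

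Combining the two facts, every open disc $D$ attached to $\eta$ is mapped by $f$ to a disc attached to $\xi$ via a radial morphism whose profile is the identity, so in particular any two such restrictions have the same profile; by the definition of radiality with respect to a skeleton (Section~\ref{sec: radialization}) this is exactly the assertion that $f$ is radial with respect to $(\{\eta\},\{\xi\})$. I expect the only delicate point to be the identity $\deg(f_{|D})=e_{\wtilde f,\wtilde y}$ at the Shilov point $\eta$ rather than at an interior type-$2$ point of a disc, but this is precisely the reduction theory recalled before Lemma~\ref{lem: basic discs} together with the disc-to-closed-point dictionary of Section~\ref{sec: factorization reduction}, so no genuine obstacle should arise. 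Note finally that the argument uses only étaleness of $\wtilde f$, not of $f$ itself, in accordance with the statement.
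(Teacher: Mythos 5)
Your proposal is correct and follows essentially the same route as the paper: the paper's proof likewise observes that residual \'etaleness forces $\deg(f_{|D})=1$ for every disc $D$ attached to the Shilov point (via Lemma \ref{lem: uniform ram properties}), so that each restriction $f_{|D}$ is an isomorphism and hence radial. Your explicit verification that the profile is the identity (so that all attached discs share the same profile, as the definition of radiality with respect to a skeleton requires) is a useful unpacking of what the paper leaves implicit, but it is not a different argument.
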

\begin{proof}
  Indeed, if $D$ is any disc in $Y$ attached to its Shilov point, the restriction $f_{|D}$ is an isomorphism (because it has degree $1$), hence is radial.
\end{proof}

\subsubsection{} We   may now factorize.
  
  \begin{thm}\label{thm: factorization}
   Let $f:Y\to X$ be a finite \'etale morphism of strictly $k$-affinoid curves with good reduction which is residually uniformly ramified. Then, there exists a strictly $k$-affinoid curve $Z$ with good reduction, together with finite \'etale morphisms $f_i:Y\to Z$ and $f_s:Z\to X$ such that $f=f_s\circ f_i$, $\wtilde{Z}\iso \wtilde{Y}^{(p^r)}$, $\wtilde{(f_s)}=\wtilde{f}_\sep$ and  $\wtilde{(f_i)}=\wtilde{f}_\ins$, where $\fri(f)=p^r$.
  \end{thm}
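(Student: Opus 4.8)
The plan is to lift the canonical reduction factorization $\wtilde{Y}\xrightarrow{\wtilde{f}_\ins}\wtilde{Z}\xrightarrow{\wtilde{f}_\sep}\wtilde{X}$ of \eqref{eq: red fact} to the level of $k$-affinoid curves. The key point is that, by Lemma~\ref{lem: uniform ram properties}(1), the residually uniform ramification hypothesis forces $\wtilde{f}_\sep$ to be \emph{finite \'etale} (not merely generically \'etale), so that it admits a flat finite \'etale lift with good reduction. The one external ingredient I would invoke is the classical fact that, for a strictly $k$-affinoid curve $W$ with good reduction, the pair $(\cA_W^\circ,\cA_W^{\circ\circ})$ is henselian; equivalently, reduction induces an equivalence between the category of finite \'etale $W$-affinoids and that of finite \'etale $\wtilde{W}$-schemes. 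Granting this, the argument is essentially formal.

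First I would lift $\wtilde{f}_\sep$. Covering $\wtilde{X}$ by affine opens $\wtilde U$ over which $\wtilde{f}_\sep$ is standard \'etale, $\wtilde{Z}$ is locally of the form $\Spec(\cO_{\wtilde{U}}[\wtilde{t}]/(\wtilde{P}))$ with $\wtilde{P}$ monic and $\wtilde{P}'$ invertible on $\wtilde{Z}|_{\wtilde{U}}$. Choosing a monic lift $P\in\cA_X^\circ[t]$ of $\wtilde{P}$ and putting $\cA_Z:=\cA_X[t]/(P)$ gives a finite strictly $k$-affinoid $\cA_X$-algebra whose canonical model is $\cA_X^\circ[t]/(P)$ (one checks this from smoothness, hence normality, of its reduction $\cO_{\wtilde{X}}[\wtilde{t}]/(\wtilde{P})$), so $Z$ has good reduction $\wtilde Z$; and since the discriminant of $P$ reduces to a unit it is a unit in $\cA_X^\circ$, so $f_s\colon Z\to X$ is finite \'etale. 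By the equivalence of categories above (equivalently, by rigidity of \'etale morphisms) these local pieces glue to a single finite \'etale morphism $f_s\colon Z\to X$ of good-reduction strictly $k$-affinoid curves with $\wtilde{f_s}=\wtilde{f}_\sep$; in particular $\wtilde{Z}\iso\wtilde{Y}^{(p^r)}$ with $p^r=\fri(f)$, and $Z$ is a connected curve because $\wtilde{Z}$ is.

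It remains to produce $f_i$. Consider the square of $k^\circ$-algebra maps with top row $\cA_X^\circ\xrightarrow{f^*}\cA_Y^\circ$, left column $f_s^*\colon\cA_X^\circ\to\cA_Z^\circ$, bottom row $\cA_Z^\circ\twoheadrightarrow\cO_{\wtilde{Z}}\xrightarrow{\wtilde{f}_\ins^*}\cO_{\wtilde{Y}}$, and right column the reduction $\cA_Y^\circ\twoheadrightarrow\cO_{\wtilde{Y}}$; it commutes because both composites reduce to $\wtilde{f}^*\circ\red=\wtilde{f}_\sep^*\circ\wtilde{f}_\ins^*\circ\red$. Since $\cA_Z^\circ$ is \'etale (even standard \'etale) over $\cA_X^\circ$ and the right column is surjective with henselian kernel $\cA_Y^{\circ\circ}$, there is a unique $\cA_X^\circ$-algebra homomorphism $\phi\colon\cA_Z^\circ\to\cA_Y^\circ$ lifting the bottom row and compatible with $f^*$ and $f_s^*$. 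Extending scalars, $\phi$ defines $f_i\colon Y\to Z$ with $f_s\circ f_i=f$ (compatibility with $f^*,f_s^*$) and $\wtilde{f_i}=\wtilde{f}_\ins$ ($\phi$ lifts the bottom row and $\cA_Z^\circ\to\cO_{\wtilde{Z}}$ is onto). Finally $f_i$ is finite \'etale by the usual cancellation, as $f=f_s\circ f_i$ with $f$ and $f_s$ finite \'etale and $f_s$ separated; and $\fri(f)=\deg\wtilde{f}_\ins=p^r$, which gives the theorem.

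The entire substance sits in the black box of the first paragraph: henselianity of $(\cA^\circ,\cA^{\circ\circ})$ under good reduction, equivalently the insensitivity of finite \'etale covers to passing to the reduction. Once that classical fact is available, the two lifting steps are routine; the only genuine care needed is checking, in the lifting step, that $\cA_X^\circ[t]/(P)$ is the correct integral model (i.e.\ integrally closed in $\cA_Z$), which again follows from smoothness of the reduction. I do not expect any serious combinatorial or analytic difficulty here.
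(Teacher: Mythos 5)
Your proposal is correct, and its first half follows the same strategy as the paper: use Lemma~\ref{lem: uniform ram properties} to see that $\wtilde{f}_\sep$ is finite \'etale and then lift it to a finite \'etale $f_s:Z\to X$ of good-reduction affinoids. The paper does this by passing to smooth formal $k^\circ$-schemes and citing Berkovich's lifting lemma for finite \'etale morphisms of special fibers (\cite[Lemma 2.1]{BerkovichCycles}, ultimately \cite[Exp.\ I, Cor.\ 8.4]{sga1}); your henselian-pair formulation for $(\cA_X^\circ,\cA_X^{\circ\circ})$ is the same fact in different clothing, and your care about $\cA_X^\circ[t]/(P)$ being the correct integral model is exactly what the formal-scheme formulation packages away. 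Where you genuinely diverge is the second half: the paper obtains $f_i$ and the identity $f=f_s\circ f_i$ by invoking \cite[Theorem 1.1]{Col85} (with $W=\emptyset$), whereas you construct $\phi:\cA_Z^\circ\to\cA_Y^\circ$ directly from the unique lifting property of the finite \'etale algebra $\cA_Z^\circ$ over $\cA_X^\circ$ against the henselian pair $(\cA_Y^\circ,\cA_Y^{\circ\circ})$ (equivalently, full faithfulness of reduction on finite \'etale covers, applied after base change to $\cA_Y^\circ$). This is a legitimate and arguably more self-contained replacement for the appeal to Coleman, at the cost of resting everything on the single black box that $(\cA^\circ,\cA^{\circ\circ})$ is henselian for a good-reduction strict affinoid --- which is classical ($\cA^\circ$ is $\pi$-adically complete and henselianity of a pair depends only on the underlying closed subset). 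The remaining verifications you list (commutativity of the square, finiteness and \'etaleness of $f_i$ by cancellation, $\wtilde{Z}\iso\wtilde{Y}^{(p^r)}$) are routine and correct.
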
 
\begin{proof}   The morphism $f:Y\to X$ canonically induces   a finite morphism
$\Phi: \fY \to \fX$  of affine smooth $\kc$-formal schemes topologically of finite type such that the reduction  
$\wtilde{f}: \wtilde{Y} \to \wtilde{X}$ identifies with the special fiber of $\Phi$. 
 In the reduction of $f$ we have the factorization 
 $$
 \wtilde{Y}\xrightarrow{\wtilde{f}_\ins} \wtilde{Z} \xrightarrow{\wtilde{f}_\sep} \wtilde{X} \;. 
 $$
 as in \eqref{eq: red fact}. By Lemma \ref{lem: uniform ram properties}, $\wtilde{f}_\sep$ is (finite and)  \'etale so  that,   by   \cite[Lemma 2.1]{BerkovichCycles} (see also \cite[Exp. I, Cor. 8.4]{sga1}) there exists an affine smooth $\kc$-formal scheme topologically of finite type $\fZ$ and  
 a finite \'etale morphism $\Phi_\sep : \fZ \to \fX$ with special fiber $\wtilde{f}_\sep$. 
  Let $Z$ be the generic fiber   of $\fZ$, so that  $Z$ is a quasi-smooth 
strictly  $k$-affinoid curve with good  reduction  $\wtilde{Z}$. 
%Since $\wtilde{X}$ is in addition also affine, there exists 
The generic fiber of $\Phi_\sep$ is then a finite morphism of quasi-smooth strictly $k$-affinoid curves with good reduction $f_2:Z \to X$ whose reduction is $\wtilde{f}_\sep$.  
Now, we are in the domain of \cite[Theorem 1.1.]{Col85} (where one takes $W = \emptyset$) and we may conclude that there exists a lifting $f_1:Y\to Z$ of $\wtilde{f}_\ins$ such that we have $f=f_2\circ f_1$. It  follows by their construction that $f_1$ and $f_2$ satisfy the properties required  in the statement.
\end{proof}

\section{Pushforwards of the constant connection}\label{sec: push}
\subsection{Generalities on $p$-adic differential equations}

\subsubsection{} Let $X$ be a quasi-smooth strictly $k$-analytic curve. If $(\cE,\nabla)$ is a $p$-adic differential equation on $X$, by which we mean a locally free $\cO_X$-module $\cE$ of finite type and of rank $r$, equipped with an integrable connection $\nabla$, then for every skeleton $\Gamma$ of $X$, we can define the multiradius function, $\cM\cR_\Gamma:X(k)\to (0,1]^r$, in the following way.  

Let $x\in X(k)$ be a rational point. Then, there exists a unique maximal open disc, say $D_x$ in $X$, which is a connected component of $X\setminus \Gamma$ and such that $x\in D_x$. We can choose a coordinate $T$ on $D_x$ which identifies it with the standard open unit disc, and as such we have a well defined radius function on $D_x$. For $r\in (0,1)$ we denote as usual by $D(x,r^-)$ the open disc centered at $x$ and of radius $r$. This disc does not depend on the chosen coordinate $T$.

\begin{defn}
 Keeping the situation above, we define the \emph{multiradius of convergence} of solutions of $\ena$ at a rational point $x$, denoted by $\cM\cR_\Gamma(x,\ena)$, as the $r$-tuple of numbers 
 $$
 \cM\cR_\Gamma(x,\ena):=(\cR_{1,\Gamma}(x,\ena),\dots,\cR_{r,\Gamma}(x,\ena)),
 $$
 where $r$ is the rank of $\cE$ and $\cR_i:=\cR_{i,\Gamma}(x,\ena)$ is given by
 $$
 \cR_i:=\sup\{s\in (0,1)\mid \dim_k H^0(D(x,s^-),\ena)\geq r-i+1\}.
 $$
 Here $H^0(D(x,s^-),\ena)$ is the $k$-vector space of analytic functions on $D(x,s^-)$ that are in the kernel of $\nabla$.
 
 We extend the previous definition to any point $x\in X$ by extending the scalars to the completion $K$ of the algebraic closure of the completed residue field $\sH(x)$, extending the skeleton $\Gamma$ to a skeleton of $X\what{\otimes}K$, picking a suitable rational point in $X\what{\otimes}K$ which is ``above'' $x$ and repeating the previous procedure, as is done with more details in \cite[Definition 3.1.11]{Cont} or \cite[Section 2.2]{PoPu}.
\end{defn}
%
%The definition does not depend on the chosen coordinate on $D_x$ identifying it with the open unit disc. 

The number $\cR_1$ is commonly referred to as the radius of convergence of solutions of $\ena$ at the point $x$. 

\subsubsection{}
\begin{defn}
 We say that the skeleton $\Sigma\supset \Gamma$ of $X$ \emph{controls $\cM\cR_{\Gamma}(\cdot,\ena)$ (with respect to $\Gamma$)} if, in case $\Sigma\neq \emptyset$, for any point $x\in X$ we have
% $\cM\cR_\Gamma(x,\ena)$ only depends on $r_\Sigma(x)$ so that we may write
 $$
 \cM\cR_\Gamma(x,\ena)=\cM\cR_\Gamma(r_\Sigma(x),\ena).
 $$
 If $\Sigma=\emptyset$, this is taken to mean that $\cM\cR(\cdot, \ena)$ is a constant vector over $X$. 
\end{defn}
\begin{rmk}\label{rmk: restr disc}
If $X$ is a quasi-smooth strictly $k$-analytic curve, $\Gamma$ its skeleton, $\ena$ a $p$-adic differential equation on $X$ and $x\in X(k)$, then it follows from the definition of the multiradius of convergence that 
$$
\cM\cR_\Gamma(x,\ena)=\cM\cR_{\emptyset}(x,\ena_{|D}),
$$
where $D$ is the open unit disc in $X\setminus \Gamma$, attached to $\Gamma$ and that contains $x$. 
\end{rmk}

We introduce some notation. For two vectors $\vec{v}\in \R^n$ and $\vec{u}\in \R^m$, we denote by $\vec{v}\ast \vec{u}$ the vector $\vec{w}\in \R^{n+m}$, which is obtained from $\vec{v}$ and $\vec{u}$ by concatenation and arranging the coefficients in a nondecreasing order (for example, $(1,2,9)\ast (4,6)=(1,2,4,6,9)$). For a $n$-fold $\ast$-product of a vector $\vec{v}$ with itself we will write $\vec{v}^{\ast d}$.

A fundamental result on $p$-adic differential equations is that the multiradius of convergence is a continuous function on $X$ and that a controlling skeleton exists. We recall some of the properties that will be used in this article.
\begin{thm}\label{thm: properties p-de controlling}
 For any $X$, $\Gamma$ and $\ena$ as above, there exists a skeleton $\Sigma$ of $X$ that controls $\ena$ with respect to $\Gamma$. Moreover, the following holds:
 \begin{enumerate}
  \item The multiradius function is continuous as a function from $X\to (0,1]^r$, where $r$ is the rank of $\cE$. It is locally constant around type 1 and type 4 points. 
  \item Any skeleton $\Sigma'$ of $X$ that contains $\Sigma$, controls $\ena$ with respect to $\Gamma$.
  \item Suppose that $\ena=(\cE_1,\nabla_1)\oplus (\cE_2,\nabla_2)$. Then, for every $x\in X$,
  $$
  \cM\cR_{\Gamma}(x,\ena)=\cM\cR_\Gamma(x,(\cE_1,\nabla_1))\ast \cM\cR_\Gamma(x,(\cE_2,\nabla_2)).
  $$
  Moreover, if $\Sigma$ controls $\ena$, then it controls both $(\cE_1,\nabla_1)$ and $(\cE_2,\nabla_2)$.
  \end{enumerate}
\end{thm}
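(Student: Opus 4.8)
The plan is to read this as a recollection of established facts: the two deep inputs — the existence of a controlling skeleton and the continuity of the multiradius — I would simply quote, and then deduce the remaining assertions by short formal arguments.

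For the existence of a skeleton $\Sigma$ controlling $\ena$ with respect to $\Gamma$, and for the continuity statement in part (1), I would cite \cite{Cont} for the single function $\cR_1$ and \cite{Pu1, PoPu} for the full multiradius, with nothing to add. Local constancy of $\cM\cR_\Gamma(\cdot,\ena)$ around points of type $1$ and $4$ then comes for free: a semistable skeleton consists only of points of type $2$ and $3$, so any point $x$ of type $1$ or $4$ lies in some connected component $D$ of $X\setminus\Sigma$, which is an open disc; by the definition of a controlling skeleton, $\cM\cR_\Gamma(\cdot,\ena)$ is constant on $D$, equal to its value at $r_\Sigma(D)\in\Sigma$, hence is constant on the open neighbourhood $D$ of $x$.

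Part (2) I would obtain from the identity $r_\Sigma=r_\Sigma\circ r_{\Sigma'}$ valid for $\Gamma\subseteq\Sigma\subseteq\Sigma'$ (retracting $X$ onto the larger graph and then onto the smaller is the same as retracting directly onto the smaller): applying the controlling property of $\Sigma$ first at $x$ and then at $r_{\Sigma'}(x)$ gives $\cM\cR_\Gamma(x,\ena)=\cM\cR_\Gamma(r_\Sigma(x),\ena)=\cM\cR_\Gamma(r_\Sigma(r_{\Sigma'}(x)),\ena)=\cM\cR_\Gamma(r_{\Sigma'}(x),\ena)$, which is precisely the assertion that $\Sigma'$ controls $\ena$ with respect to $\Gamma$ (compare \cite[\S 3.2]{Cont}). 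For part (3), since $\nabla=\nabla_1\oplus\nabla_2$ acts componentwise, the space of horizontal analytic sections splits on every subdisc, $H^0(D(x,s^-),\ena)=H^0(D(x,s^-),(\cE_1,\nabla_1))\oplus H^0(D(x,s^-),(\cE_2,\nabla_2))$; so $s\mapsto\dim_k H^0(D(x,s^-),\ena)$ is the sum of the two corresponding step functions, and reading the jumps off shows that the sorted list of radii of $\ena$ at a $k$-rational point is the sorted concatenation of the two lists — i.e.\ the $\ast$-formula — while the general case follows by the usual extension of scalars. For the last claim, suppose $\Sigma$ controls $\ena$ and fix a connected component $D$ of $X\setminus\Sigma$; then $\cM\cR_\Gamma(\cdot,\ena)$ is a constant tuple $(c_1,\dots,c_r)$ on $D$, so by the $\ast$-formula every value $\cR_{i,\Gamma}(y,(\cE_j,\nabla_j))$ with $y\in D$ lies in the finite set $\{c_1,\dots,c_r\}$; since $y\mapsto\cR_{i,\Gamma}(y,(\cE_j,\nabla_j))$ is continuous by part (1) and $D$ is connected, it is constant on $D$, and by continuity on $X$ its value equals $\cM\cR_\Gamma(r_\Sigma(D),(\cE_j,\nabla_j))$; hence $\Sigma$ controls $(\cE_j,\nabla_j)$.

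In short, there is no genuine obstacle in this write-up, because the hard analytic content (existence of controlling skeleta, continuity and path-wise regularity of the multiradius) is imported wholesale from \cite{Cont, Pu1, PoPu}. The only point needing a little care is the last step of (3): one must apply continuity to the individual partial radii of the summands, not merely to the multiradius of $\ena$, in order to invoke the elementary fact that a continuous finite-valued function on a connected space is constant.
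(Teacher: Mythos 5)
Your proposal is correct and follows essentially the same route as the paper, which likewise imports the existence of controlling skeleta and the continuity of the multiradius from \cite{Cont, Pu1, PoPu} and disposes of (2) and (3) by the same short formal arguments (the $\ast$-formula from the splitting of horizontal sections, and the continuity-plus-finite-value-set argument for the summands). The only divergence is cosmetic: for local constancy around type~4 points the paper cites \cite[Section 4.4]{Ke5}, whereas you derive it from the quoted existence of a controlling skeleton, which is a legitimate (and self-contained) alternative given that skeleta contain only points of type 2 and 3.
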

\begin{proof}
 The part one is in \cite[Theorem 0.1.7]{Cont} for $\cR_1$, and \cite[Theorem 3]{Pu1} and \cite[Theorem 3.6]{PoPu} in general. The local constancy around type 4 points is proved in \cite[Section 4.4]{Ke5}. The first part of {\em (3)} comes directly from the definition of the multiradius while the second part follows from the continuity of the multiradius function.
\end{proof}

For us, the following property of the controlling graphs will be particularly useful.

\begin{lemma}\label{lem: controlling rat}
Let $X$ be a quasi-smooth strictly $k$-analytic curve, $\Gamma$ a skeleton of $X$, let $\ena$ be a $p$-adic differential equation on $X$, and $\Sigma\supset\Gamma$ another skeleton of $X$. Then, $\Sigma\neq\emptyset$ is controls $\ena$ with respect to $\Gamma$ if and only if for every $x,y\in X(k)$ such that $r_\Sigma(x)=r_{\Sigma}(y)$, we have
$$
\cM\cR_\Gamma(x,\ena)=\cM\cR_\Gamma(y,\ena).
$$
\end{lemma}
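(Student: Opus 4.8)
The plan is to prove the equivalence by showing that the ``only if'' direction is immediate from the definition, while the ``if'' direction requires a connectedness/density argument using the structure of skeleta of discs.

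First, the ``only if'' direction: suppose $\Sigma\neq\emptyset$ controls $\ena$ with respect to $\Gamma$. If $x,y\in X(k)$ satisfy $r_\Sigma(x)=r_\Sigma(y)$, then by the controlling property applied to each point separately we get $\cM\cR_\Gamma(x,\ena)=\cM\cR_\Gamma(r_\Sigma(x),\ena)=\cM\cR_\Gamma(r_\Sigma(y),\ena)=\cM\cR_\Gamma(y,\ena)$. This uses nothing beyond the definition and the fact that $r_\Sigma(x)=r_\Sigma(y)$ as points of $X$.

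For the ``if'' direction, assume $\cM\cR_\Gamma(x,\ena)=\cM\cR_\Gamma(y,\ena)$ whenever $x,y\in X(k)$ have $r_\Sigma(x)=r_\Sigma(y)$. We must show that for \emph{every} point $z\in X$ (not just rational ones) one has $\cM\cR_\Gamma(z,\ena)=\cM\cR_\Gamma(r_\Sigma(z),\ena)$. Fix $z\in X\setminus\Sigma$, let $D$ be the connected component of $X\setminus\Sigma$ containing $z$, and let $\xi=r_\Sigma(z)\in\Sigma$ be its unique boundary point; so $D$ is an open disc attached to $\xi$, and every point of $D$ retracts to $\xi$. By Lemma~\ref{lem: finding rat}, for any point of $D$ in particular for $z$ there is a rational point $w\in D(k)$ with the same retraction, but more is true: the entire disc $D$ retracts to $\xi$, so by hypothesis the function $\cM\cR_\Gamma(\cdot,\ena)$ takes a single constant value on $D(k)$. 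The key step is then to upgrade ``constant on $D(k)$'' to ``constant on $D$''. For this I would invoke the continuity of the multiradius function from Theorem~\ref{thm: properties p-de controlling}(1), together with the density of $D(k)$ in $D$ (rational points are dense in any quasi-smooth $k$-analytic curve, $k$ being algebraically closed). Continuity plus density forces $\cM\cR_\Gamma(\cdot,\ena)$ to be constant on the closure of $D(k)$ in $X$, which contains $D$ and its boundary point $\xi$; hence $\cM\cR_\Gamma(z,\ena)=\cM\cR_\Gamma(\xi,\ena)=\cM\cR_\Gamma(r_\Sigma(z),\ena)$, as required. For $z\in\Sigma$ there is nothing to prove since $r_\Sigma(z)=z$.

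The main obstacle is the passage from rational points to all points, i.e.\ ensuring that the hypothesis (phrased only over $X(k)$) genuinely pins down the multiradius everywhere. This is where continuity of the multiradius is essential; without it the statement would fail. One technical subtlety to check is that $D(k)$ together with $\xi$ is dense in $D\cup\{\xi\}$ in the Berkovich topology so that a continuous function constant on $D(k)$ is constant on $D\cup\{\xi\}$; this is standard for quasi-smooth curves over an algebraically closed $k$ (the rational points of an open disc are dense, and $\xi$ lies in the closure of $D$). A cosmetic point: if $\Sigma=\emptyset$ the statement is vacuous or degenerate, so one may assume $\Sigma\neq\emptyset$ throughout, exactly as in the statement. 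Finally, no appeal to the deeper index-theoretic content of $p$-adic differential equations is needed here — only the definition of ``controlling'', the retraction structure of skeleta of discs (Lemma~\ref{lem: finding rat}), and the continuity in Theorem~\ref{thm: properties p-de controlling}(1).
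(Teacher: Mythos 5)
Your proof is correct, but the ``if'' direction takes a genuinely different route from the paper's. You work directly: the hypothesis makes $\cM\cR_\Gamma(\cdot,\ena)$ constant on $D(k)$ for each connected component $D$ of $X\setminus\Sigma$, and you upgrade this to constancy on $D\cup\{r_\Sigma(D)\}$ via continuity of the multiradius (Theorem \ref{thm: properties p-de controlling}~(1)) together with density of $D(k)$ in $\overline{D}$, which holds since $X$ is strictly $k$-analytic and $k$ is algebraically closed. The paper instead argues by contradiction: it picks a (type 2) point $\xi\in X\setminus\Sigma$ where the controlling property fails, invokes the \emph{existence} of a controlling skeleton $\Sigma'\supseteq\Sigma$ (the nontrivial finiteness result in Theorem \ref{thm: properties p-de controlling}), intersects it with the component $D$ containing $\xi$, and uses Lemma \ref{lem: finding rat} to produce a rational point $x\in D(k)$ with $r_{\Sigma'\cap D}(x)=r_{\Sigma'\cap D}(\xi)$; chaining $\cM\cR_\Gamma(\xi,\ena)=\cM\cR_\Gamma(x,\ena)=\cM\cR_\Gamma(r_\Sigma(x),\ena)=\cM\cR_\Gamma(r_\Sigma(\xi),\ena)$ then yields the contradiction. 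Your argument is more economical in its inputs --- it needs only continuity plus density of rational points, not the existence of controlling skeleta --- at the cost of making the density of $D(k)$ in the Berkovich topology an explicit ingredient (which, as you note, is standard, and is in any case implicitly used in the paper's opening observation that the hypothesis plus continuity already gives $\cM\cR_\Gamma(x,\ena)=\cM\cR_\Gamma(r_\Sigma(x),\ena)$ for rational $x$). Both proofs rest essentially on the continuity of the multiradius, so neither is strictly weaker in its reliance on the deep results; yours simply dispenses with the auxiliary skeleton $\Sigma'$ and Lemma \ref{lem: finding rat}.
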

\begin{proof}
 The "only if" part is clear. So, suppose that for every $x,y\in X(k)$ with $r_{\Sigma}(x)=r_{\Sigma}(y)$ the corresponding multiradii coincide. We note that a consequence of this condition and continuity of multiradius (Theorem \ref{thm: properties p-de controlling}) is that $\cM\cR_\Gamma(x,\ena)=\cM\cR_\Gamma(r_{\Sigma}(x),\ena)$. 
 
 Suppose, for the sake of contradiction, that $\Sigma$ is not controlling for $\ena$. This means that there exist a point $\xi\in X\setminus \Sigma$, such that 
 $$
 \cM\cR_\Gamma(\xi,\ena)\neq\cM\cR_\Gamma(r_{\Sigma}(\xi),\ena).
 $$
 By continuity of the multiradius, we may even assume that $\xi$ is of type 2. Let $D$ be a connected component of $X\setminus \Sigma$ which contains $\xi$ and let $\Sigma'$ be any controlling graph of $\ena$ which contains $\Sigma$. Necessarily, $\Sigma'_D:=D\cap\Sigma'\neq \emptyset$ and is a skeleton of $D$. By Lemma \ref{lem: finding rat} there exists a point $x\in D(k)$ such that $r_{\Sigma'_D}(x)=r_{\Sigma'_D}(\xi)$. Then, since $\Sigma'$ is controlling 
 \begin{align*}
 \cM\cR_{\Gamma}(\xi,\ena)&=\cM\cR_{\Gamma}(x,\ena)=\cM\cR_{\Gamma}(r_{\Sigma}(x),\ena)\\ &=\cM\cR_{\Gamma}(r_{\Sigma}(\xi),\ena)\neq  \cM\cR_{\Gamma}(\xi,\ena),
 \end{align*}
 which is a contradiction.
\end{proof}

\subsubsection{} For this section let $f:Y\to X$ be a finite \'etale morphism of degree $d$ of quasi-smooth strictly $k$-analytic curves. We recall that in this case $f_*\cO_Y$ is a locally free $\cO_X$-module of finite rank which is equal to the degree $\deg(f)$. More generally, if $\cE$ is a locally free $\cO_Y$-module of finite rank $r$, $\vphi_*\cE$ is a locally free $\cO_X$-module of finite rank $r\cdot d$. 

We also note that since $f$ is \'etale, we have an isomorphism $\Omega^1_Y\cong f^*\Omega^1_X$. Then if we are given $\ena$ a $p$-adic differential equation on $Y$ we may push forward by $f$ the integrable connection 
$$
\nabla:\cE\to \cE\otimes \Omega^1_Y,
$$
to obtain (using the projection formula) 
$$
f_*(\nabla):f_*\cE\to f_*(\cE\otimes \Omega^1_Y)\cong f_*(\cE\otimes f^*\Omega^1_X)\cong f_*\cE\otimes \Omega^1_X,
$$
hence an integrable connection on $f_*\cE$. We call the $p$-adic differential equation $(f_*\cE,f_*\nabla)$ on $X$ the \emph{pushforward of $\ena$ by $f$}. We refer to \cite[Section 1.]{Ram} for more details.

By a slight abuse of notation, we will write $\nabla$ for $f_*\nabla$ hoping it will be clear from the context which connection it denotes.

\begin{rmk}\label{rmk: push solutions} We note an important consequence of the definition of pushforward of $p$-adic differential equations. Namely (keeping the previous notation), if $U$ is any analytic subdomain of $X$, then we have an isomorphism of $k$-vector spaces
\begin{equation}\label{eq: push solutions}
 H^0(U,\fna)\iso H^0(f^{-1}(U),\ena).
\end{equation}
\end{rmk}

A natural question that arises is the relation between the multiradius of convergence of $\ena$ at some point $y\in Y$ and the one of $\fna$ at the point $x=f(y)$. The answer is given in \cite{BoPoPush} while in lemmas \ref{lem: push separable} and \ref{lem: push constant} we will recall two particular cases that we will use later on. 

\begin{lemma}\label{lem: push separable}
Let $f:Y\to X$ be a finite \'etale morphism of degree $d$ where $Y$ and $X$ are strictly $k$-affinoid curves with good reduction and with Shilov points $\eta$ and $\xi$, respectively. Let $\ena$ be a $p$-adic differential equation on $Y$ and $\fna$ be its pushforward on $Y$. Suppose that $f$ is residually \'etale, let $x\in X(k)$ and let $f^{-1}(x)=\{y_1,\dots,y_d\}$. Then
 \begin{equation}\label{eq: ast prod of radii}
 \cM\cR_{\{\xi\}}(x,\fna)=\bigast_{i=1}^d\cM\cR_{\{\eta\}}(y_i,\ena).
 \end{equation}
\end{lemma}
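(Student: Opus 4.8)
The plan is to reduce the global statement to a purely local computation on the open discs attached to the Shilov points $\eta$ and $\xi$, using Remark~\ref{rmk: push solutions} together with the residually étale hypothesis. First I would observe that, since $f$ is residually étale, Lemma~\ref{lem: uniform ram properties} applies and gives $\frs(f) = \deg(\wtilde f)$, while moreover every open disc $D$ attached to $\eta$ maps isomorphically onto an open disc $E = f(D)$ attached to $\xi$, and for each open disc $E$ attached to $\xi$ the preimage $f^{-1}(E)$ is a disjoint union of exactly $d$ open discs, each mapping isomorphically to $E$. In particular, writing $D_x = D_{\{\xi\}}(x,1^-)$ for the open $\{\xi\}$-unit disc centered at $x$, the preimage $f^{-1}(D_x)$ is a disjoint union $D_1 \sqcup \cdots \sqcup D_d$ of open discs, with $y_i \in D_i$ and $f_{|D_i}: D_i \iso D_x$ for each $i$ (this uses that $x$ is rational and lies in $D_x$, so its $d$ distinct preimages $y_1,\dots,y_d$ are distributed one per component). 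Note $D_i = D_{\{\eta\}}(y_i,1^-)$ is precisely the open $\{\eta\}$-unit disc centered at $y_i$.

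Next, by Remark~\ref{rmk: restr disc} we have $\cM\cR_{\{\xi\}}(x,\fna) = \cM\cR_{\emptyset}(x, \fna_{|D_x})$ and $\cM\cR_{\{\eta\}}(y_i,\ena) = \cM\cR_{\emptyset}(y_i, \ena_{|D_i})$, so it suffices to prove the identity for the restrictions to these discs. Now the crucial point is that $f^{-1}(D_x) = \coprod_i D_i$ with each restriction an isomorphism, hence $\fna_{|D_x}$ is isomorphic, as a $p$-adic differential equation on $D_x$, to $\bigoplus_{i=1}^d (f_{|D_i})_\ast(\ena_{|D_i})$, and each $(f_{|D_i})_\ast(\ena_{|D_i})$ is just the transport of $\ena_{|D_i}$ along the isomorphism $f_{|D_i}$. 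For a rational point $x' \in D_x(k)$ with preimages $y_i' \in D_i(k)$, the isomorphism $f_{|D_i}$ matches the subdisc $D(x',s^-)$ with $D(y_i',s^-)$ for the \emph{same} relative radius $s$ (an isomorphism of unit discs preserves relative radius; concretely $\pf_{f_{|D_i}}$ is the identity), and it matches the solution spaces $H^0(D(x',s^-), (f_{|D_i})_\ast \ena_{|D_i})$ with $H^0(D(y_i',s^-), \ena_{|D_i})$ by Remark~\ref{rmk: push solutions} applied to $U = D(x',s^-)$. Therefore $\cM\cR_\emptyset(x', (f_{|D_i})_\ast \ena_{|D_i}) = \cM\cR_\emptyset(y_i', \ena_{|D_i})$ for every $i$.

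Finally I would combine these observations: by Theorem~\ref{thm: properties p-de controlling}(3), the multiradius of a direct sum is the $\ast$-concatenation of the multiradii of the summands, so for the base point $x$ itself (whose preimages in the $D_i$ are exactly the $y_i$) we get
\[
\cM\cR_{\emptyset}(x, \fna_{|D_x}) \;=\; \bigast_{i=1}^d \cM\cR_{\emptyset}\big(x, (f_{|D_i})_\ast \ena_{|D_i}\big) \;=\; \bigast_{i=1}^d \cM\cR_{\emptyset}(y_i, \ena_{|D_i}),
\]
which is the desired equality~\eqref{eq: ast prod of radii} after translating back via Remark~\ref{rmk: restr disc}. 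The main obstacle I anticipate is the bookkeeping in the first paragraph: one must be careful that $f^{-1}(D_x)$ really is a disjoint union of $d$ discs each mapping isomorphically (this is where residual étaleness, via Lemma~\ref{lem: uniform ram properties} and the reduction-theoretic correspondence between discs attached to the Shilov point and smooth closed points of the reduction, is essential — without it components could be ramified and the clean $\ast$-product would fail, being replaced by the more intricate formula of \cite{BoPoPush}), and that the $d$ distinct rational preimages $y_i$ of $x$ land one in each component, so that no component is "wasted". The rest is a formal manipulation of solution sheaves under isomorphisms.
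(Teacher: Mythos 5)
Your proposal is correct and follows essentially the same route as the paper's proof: decompose $f^{-1}(D_x)$ into $d$ discs mapping isomorphically onto $D_x$ via Lemma~\ref{lem: uniform ram properties}, write $\fna_{|D_x}$ as the direct sum $\bigoplus_i (f_{|D_i})_\ast(\ena_{|D_i})$, and conclude with Theorem~\ref{thm: properties p-de controlling}(3), Remark~\ref{rmk: push solutions}, and Remark~\ref{rmk: restr disc}. The only difference is that you spell out more explicitly why an isomorphism of unit discs preserves relative radii and hence the multiradius, which the paper leaves implicit.
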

\begin{proof}
 Let $D$ be the connected component (open unit disc) of $X\setminus \{\xi\}$ that contains $x$. Since $f$ is residually \'etale, $f^{-1}(D)$ is a disjoint union of $d$ open discs, each of which is attached to $\eta$ and if $D'$ is any of them the restriction $f_{|D'}$ is an isomorphism of open unit discs (Lemma \ref{lem: uniform ram properties}). Then, for each $i=1,\dots,d$, there is a unique open disc, say $D_i$ which is a preimage of $D$, that contains $y_i$. We conclude that 
 $$
 \fna_{|D}=f_*(\ena_{|\bigcup_{i=1}^d D_i})=\bigoplus_{i=1}^d(f_{|D_i})_*(\ena_{|D_i}),
 $$
 so that by Theorem \ref{thm: properties p-de controlling} and Remark \ref{rmk: push solutions} 
 $$
\cM\cR(x,\fna_{|D})=\bigast_{i=1}^d\cM\cR(x,(f_{|D_i})_*(\ena_{|D_i}))=\bigast_{i=1}^d\cM\cR(y_i,\ena_{|D_i}).
 $$
 Finally, since by Remark \ref{rmk: restr disc}
 $$
 \cM\cR_{\{\xi\}}(x,\fna)=\cM\cR(x,\fna_{|D}) \quad \text{and} \quad \cM\cR_{\{\eta\}}(y_i,\ena)=\cM\cR(y_i,\ena_{|D_i}),
 $$
 we obtain \eqref{eq: ast prod of radii}.
\end{proof}

 \begin{cor}\label{cor: multi constant discs}
 Let $f:Y\to X$ be a finite \'etale, residually \'etale morphism of degree $d$ of strictly $k$-affinoid curves with good reduction. Let $\ena$ be a $p$-adic differential equation on $Y$ of rank $r$ and let $\fna$ be its pushforward on $X$. Let $\eta$ and $\xi$ be the Shilov points of $Y$ and $X$, respectively.
 
 Then, $\{\eta\}$ is controlling for $\ena$ with respect to $\{\eta\}$ if and only if $\{\xi\}$ is controlling for $\fna$ with respect to $\{\xi\}$.  
\end{cor}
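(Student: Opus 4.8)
The statement should follow almost formally from Lemma \ref{lem: push separable} together with Lemma \ref{lem: controlling rat}, since $f$ being residually \'etale gives us a clean description of the pushforward on each open disc of $X \setminus \{\xi\}$. The plan is as follows. First I would establish the "only if" direction: suppose $\{\eta\}$ controls $\ena$ with respect to $\{\eta\}$. By Lemma \ref{lem: controlling rat}, it suffices to show that for any two rational points $x, x' \in X(k)$ with $r_{\{\xi\}}(x) = r_{\{\xi\}}(x')$ — equivalently, lying in the same open disc $D$ of $X \setminus \{\xi\}$ — we have $\cM\cR_{\{\xi\}}(x, \fna) = \cM\cR_{\{\xi\}}(x', \fna)$. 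Writing $f^{-1}(x) = \{y_1,\dots,y_d\}$ and $f^{-1}(x') = \{y_1',\dots,y_d'\}$, Lemma \ref{lem: push separable} gives
$$
\cM\cR_{\{\xi\}}(x,\fna) = \bigast_{i=1}^d \cM\cR_{\{\eta\}}(y_i,\ena), \qquad \cM\cR_{\{\xi\}}(x',\fna) = \bigast_{i=1}^d \cM\cR_{\{\eta\}}(y_i',\ena).
$$
Now each $y_i$ and each $y_i'$ is a rational point of $Y$, and $Y \setminus \{\eta\}$ is a disjoint union of open discs all attached to $\eta$, so $r_{\{\eta\}}(y_i) = \eta = r_{\{\eta\}}(y_j')$ for all $i,j$. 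Since $\{\eta\}$ controls $\ena$, all the multiradii $\cM\cR_{\{\eta\}}(y_i,\ena)$ and $\cM\cR_{\{\eta\}}(y_j',\ena)$ are equal to the common value $\cM\cR_{\{\eta\}}(\eta,\ena)$, hence the two $\bigast$-products coincide, and the conclusion follows from Lemma \ref{lem: controlling rat}.

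For the "if" direction, suppose $\{\xi\}$ controls $\fna$ with respect to $\{\xi\}$; I want to show all the multiradii $\cM\cR_{\{\eta\}}(y,\ena)$, as $y$ ranges over $Y(k)$, coincide. Again by Lemma \ref{lem: controlling rat} applied on $Y$ (all rational points of $Y$ retract to $\eta$), it suffices to show this. Fix two points $y, y' \in Y(k)$, set $x = f(y)$, $x' = f(y')$, and complete $y$ to the full fiber $f^{-1}(x) = \{y = y_1, y_2, \dots, y_d\}$ and similarly $f^{-1}(x') = \{y' = y_1', \dots, y_d'\}$. The hypothesis on $\fna$ combined with Lemma \ref{lem: push separable} gives
$$
\bigast_{i=1}^d \cM\cR_{\{\eta\}}(y_i,\ena) = \cM\cR_{\{\xi\}}(x,\fna) = \cM\cR_{\{\xi\}}(\xi,\fna) = \cM\cR_{\{\xi\}}(x',\fna) = \bigast_{i=1}^d \cM\cR_{\{\eta\}}(y_i',\ena).
$$
The main obstacle here is that equality of $\bigast$-products of $r$-tuples does not immediately give equality of the individual factors, so one cannot directly conclude $\cM\cR_{\{\eta\}}(y,\ena) = \cM\cR_{\{\eta\}}(y',\ena)$ from this alone. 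To get around this I would exploit that the identity above holds not just for $x$ but for \emph{every} rational point in the open disc $D \ni x$: as we vary the rational point over $D$, its fiber varies over all of $Y(k)$ inside $f^{-1}(D)$, and by the residually \'etale hypothesis $f^{-1}(D)$ is a disjoint union of $d$ discs $D_1, \dots, D_d$ with $f|_{D_i}$ an isomorphism onto $D$. So each rational point $z \in D(k)$ gives a fiber $\{z_1, \dots, z_d\}$ with $z_i \in D_i(k)$, and every rational point of any $D_i$ arises this way. Since the equation $\ena$ restricted to $Y \setminus \{\eta\}$ decomposes over the discs attached to $\eta$, and each $D_i$ is such a disc (or contained in one), the relevant comparison reduces to showing: if $\bigast_{i=1}^d \cM\cR_{\emptyset}((f|_{D_i})^{-1}(z),\ena|_{D_i})$ is independent of $z \in D(k)$, then each $\cM\cR_{\emptyset}(w,\ena|_{D_i})$ is independent of $w$, which one proves by a continuity-and-connectedness argument: fixing all but one coordinate and moving the last, plus the continuity of the multiradius (Theorem \ref{thm: properties p-de controlling}), forces each component function to be locally constant hence constant on the connected disc.

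Alternatively — and this may be the cleaner route — I would invoke the fact from \cite{PoPu} or Theorem \ref{thm: properties p-de controlling}(2) that a controlling skeleton for $\fna$, pulled back under $f$, is controlling for $\ena$, together with the observation that $f^{-1}(\{\xi\}) = \{\eta\}$ since $f$ is finite with good reduction on both sides; combined with Remark \ref{rmk:invdesc}-style descriptions of $f^{-1}$ of affinoid neighborhoods, this pins down that the controlling skeleton of $\ena$ can be taken to be $\{\eta\}$. I expect the first, hands-on approach via Lemma \ref{lem: push separable} and continuity to be the most self-contained, with the disentangling of the $\bigast$-product being the one genuinely delicate point.
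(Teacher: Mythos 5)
Your ``only if'' direction is the paper's argument essentially verbatim: Lemma \ref{lem: push separable} plus the fact that every rational point of $Y$ retracts to $\eta$. For the ``if'' direction you take a route that is organized differently from the paper's but rests on the same key mechanism. The paper first invokes the existence of \emph{some} controlling skeleton $\Sigma\supset\{\eta\}$ for $\ena$ (Theorem \ref{thm: properties p-de controlling}), observes that outside the finitely many discs meeting $\Sigma$ the multiradius of $\ena$ equals its value at $\eta$, deduces that the constant value of $\cM\cR_{\{\xi\}}(\cdot,\fna)$ is $\cM\cR_{\{\eta\}}(\eta,\ena)^{\ast d}$, and then concludes: for any $y$, the components of $\cM\cR_{\{\eta\}}(y,\ena)$ lie in the finite, hence discrete, set of components of $\cM\cR_{\{\eta\}}(\eta,\ena)$, so continuity along the connected graph $(\Sigma\cap D_y)\cup\{\eta\}$ forces constancy. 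Your version instead works disc by disc over the target, using that $f^{-1}(D)=D_1\sqcup\dots\sqcup D_d$ with each $f_{|D_i}$ an isomorphism, and applies the discreteness--continuity--connectedness argument directly on the source discs; this avoids the auxiliary skeleton $\Sigma$ and is arguably more self-contained. You also correctly identified the disentangling of the $\bigast$-product as the one genuinely delicate point.

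Two spots in your sketch need repair, though neither is fatal. First, ``fixing all but one coordinate and moving the last'' is not available: the fiber points $z_1,\dots,z_d$ are all determined by $z$ and move simultaneously as $z$ varies in $D$. What actually closes the argument is that each function $z\mapsto\cM\cR(z_i,\ena_{|D_i})$ is continuous and its components lie in the fixed finite set of components of the constant $\bigast$-product, so it takes values in a discrete subset of $(0,1]^r$ and is therefore locally constant, hence constant on the connected disc $D_i$ --- precisely the paper's discreteness trick. Second, your per-disc argument yields a constant $c_i$ on each $D_i$, but two arbitrary points $y,y'\in Y(k)$ may lie in discs sitting over \emph{different} components $D,D'$ of $X\setminus\{\xi\}$, and equality of the sorted $\bigast$-products over $D$ and $D'$ does not by itself match up the individual constants. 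You need one more use of the continuity of the multiradius at $\eta$: since $\eta$ lies in the closure of every disc attached to it, each constant $c_i$ must equal $\cM\cR_{\{\eta\}}(\eta,\ena)$, which identifies all of them and lets Lemma \ref{lem: controlling rat} conclude.
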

\begin{proof}
  Let $x\in X(k)$ and suppose that $\{\eta\}$ is controlling for $\ena$. Let $y_1,\dots,y_d$ be all the preimages of $x$. Then, by Lemma \ref{lem: push separable}  
 $$
 \cM\cR_{\{\xi\}}(x,\fna)=\bigast_{i=1}^d\cM\cR_{\{\eta\}}(y_i,\ena)=\cM\cR_{\{\eta\}}(y_1,\ena)^{\ast d}=\cM\cR_{\{\eta\}}(\eta,\fna)^{\ast d},
 $$
 hence $\{\xi\}$ is controlling for $\fna$. 
 
 For the other direction, suppose that $\{\xi\}$ is controlling for $\fna$, and let $\Sigma$ be any controlling skeleton for $\ena$. If $\Sigma=\{\eta\}$  we are done so suppose that $\eta$ is properly contained in $\Sigma$. Since $\Sigma$ is a skeleton of $Y$, there are only finitely many connected components of $Y\setminus\{\eta\}$ that intersect $\Sigma$. Let us denote their union by $Z$. We note that for every $y\in Y\setminus Z$, $r_{\Sigma}(y)=\eta$ so that
 \begin{equation}\label{eq: first case}
\cM\cR_{\{\eta\}}(y,\fna)=\cM\cR_{\{\eta\}}(\eta,\fna),\quad \text{for all} \quad y\in Y\setminus Z.
 \end{equation}
Let $x\in X(k)\setminus f(Z)$. Then, keeping the notation $\{y_1,\dots,y_d\}=f^{-1}(x)$, the previous formula together with \eqref{eq: ast prod of radii} implies that 
\begin{equation}\label{eq: ast power d}
\cM\cR_{\{\xi\}}(x,\fna)=\cM\cR_{\{\eta\}}(\eta,\fna)^{\ast d}.
\end{equation}
Since $\{\xi\}$ is controlling for $\fna$, the previous formula is valid for all $x\in X(k)$.

  Now let $y\in Z(k)$ be arbitrary and let $x=f(y)$, and let $y_1=y,y_2,\dots,y_d$ be all the preimages of $x$. Formulas \eqref{eq: ast prod of radii} and \eqref{eq: ast power d} imply that all the components of the multiradius $\cM\cR_{\{\eta\}}(y,\ena)=\cM\cR_{\{\eta\}}(r_\Sigma(y),\ena)$ are contained in the discrete set of components of $\cM\cR_{\{\eta\}}(\eta,\ena)$. Let $D_y$ be the connected component of $Z$ that contains $y$. Since $\cM\cR_{\{\eta\}}(\cdot,\ena)$ is continuous along the skeleton $\big(\Sigma\cap D_y\big)\cup\{\eta\}$ (where the skeleton is equipped with the induced topology), and it takes values in the discrete subset of $(0,1]^{r}$, it must be constant, hence equal to $\cM\cR_{\{\eta\}}(\eta,\ena)$. Since $y$ was arbitrary rational point in $Z$, and having in mind \eqref{eq: first case} we conclude that for every $y_1,y_2\in Y(k)$, 
  $$
  \cM\cR_{\{\eta\}}(y_1,\ena)=\cM\cR_{\{\eta\}}(y_2,\ena),
  $$
  hence by Lemma \ref{lem: controlling rat} we conclude that $\{\eta\}$ itself is controlling for $\ena$.
\end{proof}

The following is Corollary 4.4 in \cite{BoPoPush}, but for the convenience we provide the full proof here.
\begin{lemma}\label{lem: push constant} (See \cite[Corollary 4.4.]{BoPoPush})
 Let $f:D_1\to D_2$ be a finite \'etale morphism of degree $d$ of open unit discs, let $\fna:=f_*(\cO_{D_1},d_{D_1})$ be the pushforward of the constant connection and let $x\in D_2(k)$. Let further $b_1<\dots<b_{n-1}$ be the jumps of the function $N_x$, let $b_0=0$ and put $N_i:=N_x(b_{i-1})$, $i=1,\dots,n$. Then, the multiradius
 $$
 \cM\cR(x,\fna)=(R_1,\dots,R_d),
 $$
 is given by
 \begin{align*}
  &R_1=\dots=R_{d-N_2}=b_1;\\
  &R_{d-N_2+1}=\dots=R_{d-N_3}=b_2;\\
  &\vdots\\
  &R_{d-N_{n-1}+1}=\dots=R_{d-1}=b_{n-1};\\
  &R_d=1.
 \end{align*}
\end{lemma}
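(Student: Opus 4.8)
The plan is to observe that horizontal sections of $\fna$ over an open subdisc of $D_2$ are nothing but locally constant functions on its preimage in $D_1$, so that the entire multiradius of $\fna$ at $x$ is controlled by the single step function $N_x$.

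The first step is the identity
\[
\dim_k H^0(D(x,s^-),\fna)=N_x(s),\qquad s\in(0,1).
\]
By Remark~\ref{rmk: push solutions} (the isomorphism \eqref{eq: push solutions}) the left-hand side equals $\dim_k H^0\bigl(f^{-1}(D(x,s^-)),(\cO_{D_1},d_{D_1})\bigr)$. Since $f$ is finite, Lemma~\ref{lem: basic discs}\,(2) shows that $f^{-1}(D(x,s^-))$ is a finite disjoint union of open discs, on each of which a horizontal section of the constant connection is a constant; hence this dimension equals the number of connected components of $f^{-1}(D(x,s^-))$. By Remark~\ref{rmk:invdesc} those components are precisely the (pairwise distinct ones among the) discs attached to $\zeta_{x,s}$, and their number is $\#f^{-1}(\zeta_{x,s})=N_x(s)$.

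The second step is bookkeeping with $N_x$. By the definition of the multiradius and the identity above,
\[
R_i=\sup\{\,s\in(0,1):N_x(s)\ge d-i+1\,\},\qquad i=1,\dots,d.
\]
Recall, from the paragraph following Remark~\ref{rmk:invdesc}, that $N_x$ is non-increasing, right-continuous, equal to $N_j$ on $[b_{j-1},b_j)$ for $j=1,\dots,n$ (with $b_n:=1$), and satisfies $N_1>\dots>N_n$. Two observations fix the endpoints: since $f$ is \'etale it is unramified, so $N_x(s)=\deg f=d$ for $s$ near $0$, whence $N_1=d$; and $N_x(s)=1$ for $s$ near $1$, whence $N_n=1$. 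As $N_x$ is non-increasing with strictly decreasing values, for each $c=d-i+1$ the set $\{s:N_x(s)\ge c\}$ is an initial segment $[0,b_j)$ with $j=\max\{j':N_{j'}\ge c\}$, so $R_i=b_j$; and $R_i=b_j$ holds exactly when $N_{j+1}<d-i+1\le N_j$, i.e. when $d-N_j+1\le i\le d-N_{j+1}$ for $1\le j\le n-1$ (the range for $j=1$ being $1\le i\le d-N_2$, by $N_1=d$), while $R_d=b_n=1$ since $N_x\ge 1$ throughout. Because the $N_j$ strictly decrease, these ranges partition $\{1,\dots,d\}$, and one reads off
\[
R_1=\dots=R_{d-N_2}=b_1,\quad\dots,\quad R_{d-N_{n-1}+1}=\dots=R_{d-1}=b_{n-1},\quad R_d=1,
\]
as claimed.

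I do not foresee a genuinely hard step: the argument is an assembly of ingredients already in place. The only point requiring care is the first identification — that $\dim_k H^0(D(x,s^-),\fna)$ equals the number of connected components of $f^{-1}(D(x,s^-))$, and hence $N_x(s)$ — where finiteness of $f$ enters through Lemma~\ref{lem: basic discs}\,(2) and \'etaleness through the equality $N_1=d$; the latter is exactly what makes the list of radii begin at $R_1=b_1$ rather than leaving the first $d-N_1$ entries of the multiradius undefined.
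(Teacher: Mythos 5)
Your argument is correct and follows essentially the same route as the paper's proof: reduce, via Remark~\ref{rmk: push solutions}, to counting connected components of $f^{-1}(D(x,s^-))$, and then do the bookkeeping with the step function $N_x$. One caveat on your first step: the identity $\dim_k H^0(D(x,s^-),\fna)=N_x(s)$ fails at the jump points $s=b_j$. There the number of connected components of $f^{-1}(D(x,b_j^-))$ equals $N_x(b_j^-)=N_j$ rather than $N_x(b_j)=N_{j+1}$, because at exactly those radii two distinct components $D(a_1,\rho_1^-)$ and $D(a_2,\rho_2^-)$ with $|a_1-a_2|=\rho_1=\rho_2$ are still disjoint while their maximal points already coincide; Remark~\ref{rmk:invdesc} counts distinct maximal points, not distinct discs, so it does not give what you claim at these $s$. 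This is precisely why the paper asserts the count $\#f^{-1}(D(x,s^-))=N_{i+1}$ only for $s$ in the open intervals $(b_i,b_{i+1})$. The slip is harmless here: each $R_i$ is a supremum, the functions $s\mapsto N_x(s)$ and $s\mapsto N_x(s^-)$ differ only on the finite set $\{b_1,\dots,b_{n-1}\}$, and $\sup\{s:N_x(s)\ge c\}=\sup\{s:N_x(s^-)\ge c\}$ for every $c$, so your final formulas stand.
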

\begin{proof}
We start by noticing that for $i=0,\dots,n-1$ and $s\in (b_i,b_{i+1})$,  $\#f^{-1}(D(x,s^-))=N_{i+1}$. Indeed, in this case $N_x(s)$ is constant and each connected component of $f^{-1}(D(x,s^-))$ is determined by the point in $D_1$ to which it is attached, and there is precisely $N_x(b_{i})=N_{i+1}$ of these. 

 From the definition of multiradius of convergence and using Remark \ref{rmk: push solutions} we obtain 
 \begin{align*}
 R_i&=\sup\{s\in (0,1)\mid \dim_k H^0(D(x,s^-),\fna)\geq d-i+1\}\\
 &=\sup\{s\in (0,1)\mid \dim_k H^0(f^{-1}(D(x,s^-)),(\cO_{D_1},d_{D_1}))\geq d-i+1\}.
 \end{align*}
  Next we note that 
\begin{align*}
\dim_k H^0(f^{-1}(D(x,s^-)),(\cO_{D_1},d_{D_1}))&=\sum_{D\text{ c.c. of }f^{-1}(D(x,s^-))}\dim_k H^0(D,(\cO_{D_1}, d_{D_1}))\\
&=\#f^{-1}(D(x,s^-)).
\end{align*} 
where "c.c." stands for "connected component" and we used that $H^0(D,(\cO_{D_1},d_{D_1}))=k$. Finally, we obtain 
\begin{equation}\label{eq: R_i simple}
R_i=\sup\{s\in (0,1)\mid \#f^{-1}(D(x,s^-))\geq d-i+1\}.
\end{equation}
Since by Remark \ref{rmk:invdesc} for $s$ close enough to 1, $\#f^{-1}(D(x,s^-))=1$, we immediately obtain that $R_d=1$. To find $R_{d-1}$ and the rest of the radii we use the remark from the beginning of the proof to see that the supremum of $s\in (0,1)$ such that $\#f^{-1}(D(x,s^-))\geq2$ is precisely $b_{n-1}$ and since $\#f^{-1}(D(x,b_{n-1}^-))=N_{n-1}$ we obtain that $R_{d-N_{n-1}+1}=\dots=R_{d-1}=b_{n-1}$. The same reasoning gives us the rest of the radii.
\end{proof}

The following lemma should be compared to the Corollary \ref{cor: radial implies uniformity} as it ``announces'' the relation between the controlling graphs of the pushforard of the constant connection and radializing skeleta of the morphism. 
\begin{lemma}\label{lem: push res ram}
 Let $f:Y\to X$ be a finite  \'etale morphism of strictly $k$-affinoid curves with good reduction and maximal points $\eta$ and $\xi$, respectively. Let $\fna:=f_*(\cO_Y,d_Y)$. 
 
 If $\{\xi\}$ is controlling for $\cM\cR_{\{\xi\}}(\cdot, \fna)$ then $f$ is residually uniformly ramified.  
\end{lemma}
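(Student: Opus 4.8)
The plan is to convert the hypothesis that $\{\xi\}$ controls $\fna$ into the statement that the number of connected components of $f^{-1}(D)$ is the same for every open disc $D$ attached to $\xi$, and then to recognize this number on the reduction via the canonical factorization \eqref{eq: red fact}.

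First I would observe that, since $\{\xi\}$ is controlling for $\fna$ with respect to $\{\xi\}$, the vector $\cM\cR_{\{\xi\}}(x,\fna)$ is independent of $x\in X(k)$; denote it $(R_1,\dots,R_d)$ with $R_1\le\cdots\le R_d$. The key step is then to compute, for a fixed $x\in X(k)$, how many of the $R_i$ are equal to $1$. Let $D_x$ be the open disc attached to $\xi$ containing $x$, identified with the open unit disc, and let $m(x)$ be the number of connected components of $f^{-1}(D_x)$, all of which are open discs attached to $\eta$. By the correspondence (recalled above, in the discussion of reductions of affinoid curves) between discs attached to a Shilov point and closed points of the reduction, together with $\red\circ f=\wtilde f\circ\red$, these components correspond bijectively to the points of $\wtilde{f}^{-1}(\wtilde x)$, where $\wtilde x\in\wtilde X$ is the closed point attached to $D_x$; in particular $m(x)=\#\wtilde{f}^{-1}(\wtilde x)$. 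Next, for $s\in(0,1)$ close to $1$, Remark~\ref{rmk:invdesc} applied to the restriction of $f$ to each component of $f^{-1}(D_x)$ shows that $f^{-1}(D(x,s^-))$ is a disjoint union of exactly $m(x)$ open discs; hence, by Remark~\ref{rmk: push solutions} and the fact that $H^0$ of the constant connection on a disc is one-dimensional, $\dim_k H^0(D(x,s^-),\fna)=m(x)$ for $s$ close to $1$. Since $s\mapsto\dim_k H^0(D(x,s^-),\fna)$ is non-increasing, this yields $R_i=1$ exactly when $d-i+1\le m(x)$, so precisely $m(x)$ of the numbers $R_1,\dots,R_d$ equal $1$.

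Combining the two steps, $m(x)$ does not depend on $x\in X(k)$, say $m(x)\equiv m_0$. Since every closed point of the smooth curve $\wtilde X$ is the reduction of a disc attached to $\xi$, and every such disc contains a $k$-rational point, I would conclude that $\#\wtilde{f}^{-1}(\wtilde x)=m_0$ for every closed point $\wtilde x$ of $\wtilde X$. Now pass to the canonical factorization $\wtilde Y\xrightarrow{\wtilde{f}_\ins}\wtilde Z\xrightarrow{\wtilde{f}_\sep}\wtilde X$ of \eqref{eq: red fact}: the radicial morphism $\wtilde{f}_\ins$ is a bijection on closed points, so $\#\wtilde{f}_\sep^{-1}(\wtilde x)=m_0$ for every closed point $\wtilde x$. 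Finally, $\wtilde{f}_\sep$ is a finite generically étale morphism of smooth $\kt$-curves of degree $\frs(f)$; it is étale over a dense open subset of $\wtilde X$, where the fibres have exactly $\frs(f)$ points, which forces $m_0=\frs(f)$ and hence (using that a finite morphism of smooth curves is flat, so fibre degrees are constant) that all fibres of $\wtilde{f}_\sep$ have cardinality $\frs(f)$, i.e. that $\wtilde{f}_\sep$ is étale. By Lemma~\ref{lem: uniform ram properties}, this is precisely the assertion that $f$ is residually uniformly ramified.

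The hardest point I anticipate is the computation in the second step — that exactly $m(x)$ of the $R_i$ equal $1$, equivalently that $\dim_k H^0(D(x,s^-),\fna)$ stabilizes to $m(x)$ as $s\to 1^-$ — which requires some care with the count of connected components of $f^{-1}(D(x,s^-))$ for $s$ near $1$, and this is where Remark~\ref{rmk:invdesc} enters. The remaining steps are formal consequences of the definitions and of the structure of the separable–inseparable factorization.
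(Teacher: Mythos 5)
Your proposal is correct and follows essentially the same route as the paper: the paper likewise decomposes $\fna_{|D_x}$ over the connected components of $f^{-1}(D_x)$, uses Lemma~\ref{lem: push constant} to see that the number of entries of $\cM\cR_{\{\xi\}}(x,\fna)$ equal to $1$ is exactly the number of those components, deduces from the controlling hypothesis that this number is independent of $x$, and concludes by Lemma~\ref{lem: uniform ram properties}. You merely re-derive the ``count of radii equal to $1$'' directly from Remarks~\ref{rmk: push solutions} and~\ref{rmk:invdesc} instead of quoting Lemma~\ref{lem: push constant}, and you spell out the reduction-theoretic last step that the paper leaves to the cited lemma.
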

\begin{proof}
 Let $x\in X(k)$, let $D_x$ be the maximal disc in $X$ attached to $\xi$ and which contains $x$. Let $D_1, \dots,D_s$ be all the preimages of the disc $D_x$. We will prove that $s$ does not depend on $D_x$.
 By equality 
 $$
\fna_{|D_x}=\bigoplus_{i=1}^s (f_{|D_i})_*(\cO_{D_i},d_{D_i}). 
$$
and Remark \ref{rmk: restr disc} it follows that 
$$
\cM\cR_{\{\xi\}}(x,\fna)=\cM\cR(x,\fna_{|D_x})=\bigast_{i=1}^s\cM\cR(x,(f_{|D_i})_*(\cO_{D_i},d_{D_i})).
$$
The last equality and Lemma \ref{lem: push constant} implies that there are exactly $s$ entries in $\cM\cR_{\{\xi\}}(x,\fna)$ which are equal to $1$. In particular, since $\{\xi\}$ is controlling for $\fna$, it follows that $s$ is the same for all $x\in X(k)$, hence the number of preimages of any open disc in $X$ that is attached to $\xi$ is constant. By Lemma \ref{lem: uniform ram properties} $f$ is residually uniformly ramified.
\end{proof}

\section{Radializing and controlling skeleta}\label{sec: rad and contr}

We are ready for our main result.

\begin{thm}\label{thm: main thm}
 Let $f:Y\to X$ be a finite \'etale morphism of quasi-smooth strictly $k$-analytic curves and let $\Gamma_f=(\Gamma_Y,\Gamma_X)$ be a skeleton of $f$. Then, $\Gamma_f$ is a radializing skeleton for $f$ if and only if $\Gamma_X$ is controlling for the connection $f_*(\cO_Y,d_Y)$ with respect to $\Gamma_X$.
\end{thm}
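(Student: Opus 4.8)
The plan is to reduce the equivalence to a local assertion at each type~$2$ point of $\Gamma_X$, and to settle that local assertion by applying the target-disc criterion for radiality (Theorem~\ref{thm: criterion target}) after stripping off the residually \'etale part of $f$ via the factorization Theorem~\ref{thm: factorization}. Since the genuine content has already been invested in Theorem~\ref{thm: criterion target}, the step I expect to be the real obstacle is the localization: one must choose, for each $\xi\in\Gamma_X$, an affinoid neighbourhood $X_\xi$ with good reduction and Shilov point $\xi$ that records exactly the open discs attached to the points of $f^{-1}(\xi)\subset\Gamma_Y$ and leaves the multiradii of $f_\ast(\cO_Y,d_Y)$ undistorted, and then verify, using Lemma~\ref{lem: controlling rat}, that the conjunction over all $\xi$ of the resulting local controlling conditions is equivalent to the global one.

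\textbf{Step 1 (localization).} Fix a type~$2$ point $\xi\in\Gamma_X$ with $f^{-1}(\xi)=\{\eta_1,\dots,\eta_l\}$, and choose the $X_\xi$ as above, ``branching off'' $\Gamma_X$ at $\xi$, so that $f^{-1}(X_\xi)$ is a disjoint union of affinoids $W_j$ with good reduction whose Shilov points are among the $\eta_i$, with $g_j:=f|_{W_j}\colon W_j\to X_\xi$ finite \'etale and surjective. Then the components of $Y\setminus\Gamma_Y$ attached to a given $\eta_i$ are precisely the discs attached to $\eta_i$ inside the $W_j$, so $\Gamma_f$ is radializing for $f$ iff every $g_j$ is radial with respect to $(\{\eta_i\},\{\xi\})$. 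On the other hand, by Lemma~\ref{lem: controlling rat} and Remark~\ref{rmk: restr disc} the multiradius of $f_\ast(\cO_Y,d_Y)$ at a rational point $x$ depends only on its restriction to the disc containing $x$, which lies in some $X_\xi$; as $f_\ast(\cO_Y,d_Y)|_{X_\xi}=\bigoplus_j (g_j)_\ast(\cO_{W_j},d_{W_j})$, Theorem~\ref{thm: properties p-de controlling}\,(3) shows that $\Gamma_X$ controls $f_\ast(\cO_Y,d_Y)$ with respect to $\Gamma_X$ iff $\{\xi\}$ controls each $(g_j)_\ast(\cO_{W_j},d_{W_j})$ with respect to $\{\xi\}$, for all $\xi$ and $j$. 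It thus suffices to prove, for a finite \'etale morphism $g\colon W\to V$ of affinoid curves with good reduction and Shilov points $\eta$ and $\xi$: $g$ is radial with respect to $(\{\eta\},\{\xi\})$ iff $\{\xi\}$ controls $g_\ast(\cO_W,d_W)$ with respect to $\{\xi\}$.

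\textbf{Step 2 (reduction to the residually radicial case).} If $g$ is not residually uniformly ramified at $\eta$, both sides fail: a radial $g$ would force the restrictions $g|_E$ to all discs $E$ attached to $\eta$ to share a profile, hence a degree, making $\wtilde g$ of constant multiplicity, while $\{\xi\}$ controlling $g_\ast(\cO_W,d_W)$ forces uniform ramification by Lemma~\ref{lem: push res ram}. So assume $g$ residually uniformly ramified and, by Theorem~\ref{thm: factorization}, write $g=g_s\circ g_i$ with $g_i\colon W\to Z$ residually radicial and $g_s\colon Z\to V$ residually \'etale, $Z$ an affinoid with good reduction and Shilov point $\zeta$. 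Since $g_s$ restricts to an isomorphism on each disc attached to $\zeta$, it is radial with respect to $(\{\zeta\},\{\xi\})$; hence by the two-out-of-three property (Remark~\ref{rmk: 2 out 3}) $g$ is radial iff $g_i$ is, and by Corollary~\ref{cor: multi constant discs} applied to $g_s$ and the connection $(g_i)_\ast(\cO_W,d_W)$ on $Z$, $\{\zeta\}$ controls $(g_i)_\ast(\cO_W,d_W)$ with respect to $\{\zeta\}$ iff $\{\xi\}$ controls $(g_s)_\ast(g_i)_\ast(\cO_W,d_W)=g_\ast(\cO_W,d_W)$ with respect to $\{\xi\}$. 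We are reduced to the residually radicial case.

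\textbf{Step 3 (the residually radicial case).} Now $\wtilde{g_i}$ is a homeomorphism, so $E\mapsto g_i(E)$ is a bijection between the discs attached to $\eta$ and those attached to $\zeta$, with $g_i^{-1}(g_i(E))=E$, and each $g_i|_E\colon E\to g_i(E)$ is a finite \'etale morphism of open unit discs of the constant degree $\fri(g)$. Hence $(g_i)_\ast(\cO_W,d_W)|_{g_i(E)}=(g_i|_E)_\ast(\cO_E,d_E)$ and, by Lemma~\ref{lem: push constant}, the multiradius $\cM\cR_{\emptyset}\bigl(x,(g_i|_E)_\ast(\cO_E,d_E)\bigr)$ at $x\in g_i(E)(k)$ determines and is determined by the function $N_{g_i|_E,x}$: its jumps are the break radii, its values there come from the multiplicities, and it equals $\fri(g)$ for $\rho$ small and $1$ for $\rho$ close to $1$. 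By Lemma~\ref{lem: controlling rat} and Remark~\ref{rmk: restr disc}, $\{\zeta\}$ controls $(g_i)_\ast(\cO_W,d_W)$ with respect to $\{\zeta\}$ iff the functions $N_{g_i|_E,x}$ coincide for all discs $E$ attached to $\eta$ and all $x\in g_i(E)(k)$. Fixing $E$ and letting $x$ vary, this amounts (Theorem~\ref{thm: criterion target}) to each $g_i|_E$ being radial; letting $E$ vary, the last clause of that theorem (the profile of a radial morphism of open discs and its function $N$ determine each other) makes it the coincidence of the profiles of the $g_i|_E$, i.e. the radiality of $g_i$ with respect to $(\{\eta\},\{\zeta\})$. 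Chaining the equivalences of Steps~2 and~3 settles the local statement, and Step~1 then yields the theorem.
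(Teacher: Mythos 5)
Your proposal is correct and follows essentially the same route as the paper's proof: localization to good-reduction affinoid pieces at the type~2 points of $\Gamma_X$, reduction via Lemma~\ref{lem: push res ram} and Theorem~\ref{thm: factorization} to the residually radicial case (transferring the controlling condition across the residually \'etale part by Corollary~\ref{cor: multi constant discs} and radiality by Remark~\ref{rmk: 2 out 3}), and the final identification of the multiradius data with the functions $N_x$ via Lemma~\ref{lem: push constant} and Theorem~\ref{thm: criterion target}. The only differences are minor: the paper proves the ``radializing $\Rightarrow$ controlling'' direction directly from the direct-sum decomposition over discs without routing it through the factorization, and it treats separately the case $\Gamma_X=\emptyset$ (both curves open unit discs), which your Step~1 tacitly excludes by fixing a type~2 point $\xi\in\Gamma_X$; you should add a line covering that case, which is immediate from Corollary~\ref{cor: N properties} together with Lemma~\ref{lem: push constant}.
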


\begin{proof} 
Let us put $\fna:=f_*(\cO_Y,d_Y)$.

First we consider the case where $\Gamma_Y=\Gamma_X=\emptyset$ so that both $Y$ and $X$ are open unit discs. In this case, $f$ being radial is equivalent to for any $x,y\in X(k)$, $N_x\equiv N_y$, by Corollary \ref{cor: N properties} {\em (2)}. Then, by Corollary \ref{cor: N properties} {\em (1)} and Lemma \ref{lem: push constant} this is equivalent that $\cM\cR(x,\fna)=\cM\cR(y,\fna)$ so that $\cM\cR(\cdot,\fna)$ is constant all over $X$. 

Assume now that $\Gamma_Y$ and $\Gamma_X$ are not empty. 

Let $C_\xi$ be a strictly $k$-affinoid domain in $X$ with good reduction and with Shilov point $\xi\in \Gamma_X$ and such that $C_\xi\cap \Gamma_X=\{\xi\}$ (we note that the $k$-rational points of such $k$-affinoid domains cover $X(k)$). Since $\Gamma_f$ is a skeleton of $f$, $f^{-1}(C_\xi)$ is a disjoint union of affinoid domains $C_{\eta_i}$, where each $C_{\eta_i}$ is an affinoid domain in $Y$ with good reduction, with Shilov point $\eta_i\in \Gamma_Y$ and $C_{\eta_i}\cap \Gamma_Y=\{\eta_i\}$, for $i=1,\dots,n$. Then,
 \begin{equation}\label{eq: sum}
  \fna_{|C_\xi}=\bigoplus_{i=1}^nf_*(\cO_{C_{\eta_i}}, d_{C_{\eta_i}}).
 \end{equation}
Let us suppose that $f$ is radial and let $x\in C_{\xi}(k)$. Let $D$  be the connected component of $X\setminus \Gamma$  that contains $x$. So $D\subset C_\xi$ and  $f^{-1}(D)$ is a disjoint union of open discs in $Y\setminus \Gamma_Y$ each of which is attached to $\Gamma$. Les us denote, for $i=1,\dots,n$, by $D_{i,1},\dots,D_{i,l(i)}$  those of the previous discs which are contained in $C_{\eta_i}$, and let us write $f_{i,j}:=f_{|D_{i,j}}$. We note that by Corollary \ref{cor: radial implies uniformity} and Lemma \ref{lem: uniform ram properties} that, for any $i=1,\dots,n$, $l(i)$ does not depend on the disc $D$ that is attached to $\xi$. Then, we may write
$$
 \fna_{|D}=\bigoplus_{i=1}^n\bigoplus_{j=1}^{l(i)}(f_{i,j})_*(\cO_{D_{i,j}},d_{D_{i,j}}).
$$
By Theorem \ref{thm: properties p-de controlling} and Remark \ref{rmk: restr disc} it follows that
\begin{equation}\label{eq: ast radii precise}
\cM\cR_{\Gamma}(x,\fna)=\cM\cR(x,\fna_{|D})=\bigast_{i=}^n\bigast_{j=1}^{l(i)}\cM\cR(x,(f_{i,j})_*(\cO_{D_{i,j}},d_{D_{i,j}})).
\end{equation}
Since $f$ is radial, each of the morphism $f_{i,j}$ is radial and for a fixed $i$, the profile of $f_{i,j}$ does not depend on $j$. Consequently, for a fixed $i$, the functions $N_{f_{i,j},x}$ do not depend on $j$ and neither on $x$ nor on the disc $D$ in $C_{\xi}$ attached to $\xi$ (see Corollary \ref{cor: N properties} $(2)$). Finally, by Lemma \ref{lem: push constant} we may conclude that the right-hand side of \eqref{eq: ast radii precise} does not depend on $x\in C_\xi(k)$ so that $\Gamma_X$ is controlling for $\fna$. 
\par
In the other direction, suppose that $\Gamma_X$ is controlling for $\fna$. We note that in order to prove that $f$ is radial it is enough to prove that for each $i=1,\dots,n$, $f_{|C_{\eta_i}}:C_{\eta_i}\to C_\xi$ is radial (with respect to the skeleton $(\{\eta_i\},\{\xi\})$). The fact that $\Gamma_X$ is controlling for $\fna$, hence that $\{\xi\}$ is controlling for $\fna_{|C_\xi}$ together with \eqref{eq: sum} implies that $\{\xi\}$ controls each of the $p$-adic differential equations $f_*(\cO_{C_{\eta_i}}, d_{C_{\eta_i}})$ by Theorem \ref{thm: properties p-de controlling}. This means that, without loss of generality, we may assume that $f:Y\to X$ is a finite \'etale morphism of affinoid domains with good reduction and with maximal (type 2) points $\eta$ and $\xi$, respectively, and our goal is to prove that it is radial with respect to the canonical skeleton $(\{\eta\},\{\xi\})$, assuming that $\{\xi\}$ controls $\fna$. 

By Lemma \ref{lem: push res ram} $f$ is residually uniformly ramified and by  Theorem \ref{thm: factorization} there exists a strictly  $k$-affinoid curve $Z$ with good reduction and Shilov point $\omega$ together with finite \'etale morphisms $f_i:Y\to Z$ and $f_s:Z\to X$ such that $f_i$ is residually radicial, $f_s$ is residually \'etale, and $f=f_s\circ f_i$. Since $f_s$ is radial with respect to $(\{\eta\},\{\xi_s\})$ (Corollary \ref{cor: res etale radial}) then $f$ will be radial if and only if $f_i$ is radial (Remark \ref{rmk: 2 out 3}). Moreover, since $\fna=(f_{s})_*\big((f_{i})_*(\cO_Y,d_Y)\big)$ Corollary \ref{cor: multi constant discs} implies that $\{\xi\}$ is controlling for $\fna$ (w.r.t. $\{\xi\}$) if and only if $\{\omega\}$ is controlling for $(f_{i})_*(\cO_Y,d_Y)$ (w.r.t. $\{\omega\}$). In other words, without loss of generality we may assume that our morphism $f:Y\to X$ is in addition also residually radicial at $\eta$. 

Now, for any disc $B$ in $Y$ attached to $\eta$, the restriction $f_{|B}:B\to D:=f(B)$ is a finite \'etale morphism of open discs, and $f^{-1}(D)=B$ (Lemma \ref{lem: uniform ram properties}). Hence, $\fna_{|D}=(f_{|B})_*(\cO_B,d_B)$ and for any $x\in D$
$$
\cM\cR_{\{\xi\}}(x,\fna)=\cM\cR(x,\fna_{|D})=\cM\cR(x,(f_{|B})_*(\cO_B,d_B)).
$$
Since the left hand side of the previous equation is constant on $C_\xi$ it follows from Lemma \ref{lem: push constant} that the functions $N_{f_{|D},x}$ do not depend on $x$, hence $f_{|D}$ is radial. For the same reason the functions $N_{f_{D},x}$ coincide for all discs $D$ and $x\in D(k)$. Then Corollary \ref{cor: N properties} $(2)$ implies that for all $D$ the morphisms $f_{|D}$ have the same profile. The morphism $f$ is then radial.
\end{proof}
\begin{rmk}
The previous theorem together with \ref{thm: properties p-de controlling} {\em (1)} implies the existence of  radializing skeleta for finite \'etale morphisms of quasi-smooth strictly $k$-analytic curves, \cite[Theorem 3.4.11.]{TeMU}. One can also allow classical ramification. Given a finite $f:Y\to X$ morphism of quasi-smooth $k$-analytic curves, one restricts $f$ to a finite \'etale morphism $g: Y - f^{-1}(B) \to X - B$, where $B \subset X(k)$ denotes  the branching locus of $f$. Once a radializing skeleton  for $g$  is obtained, a skeleton   for $f$ is found by adding edges  to reach the points of $ f^{-1}(B)$ and $B$, respectively.
\end{rmk}

\end{document}